\documentclass{amsart}

\usepackage{graphicx} 

\usepackage{amsmath,amssymb,hyperref,mathabx,color,enumerate,chngpage}

\usepackage{mathrsfs}

\usepackage{lipsum}
\usepackage{amsfonts}
\usepackage{graphicx}
\usepackage{epstopdf}
\usepackage{algorithm}
\usepackage{algpseudocode}
\ifpdf
  \DeclareGraphicsExtensions{.eps,.pdf,.png,.jpg}
\else
  \DeclareGraphicsExtensions{.eps}
\fi

\newtheorem{definition}{Definition}[section]

\newtheorem{theorem}{Theorem}[section]
\newtheorem{lemma}[theorem]{Lemma}

\newcommand{\algn}[1]{\begin{align} #1 \end{align}}
\newcommand{\algns}[1]{\begin{align*} #1 \end{align*}}

\newcommand{\mc}[1]{\mathcal{#1}}
\newcommand{\LRp}[1]{\left( #1 \right)}

\newcommand{\dive}{\operatorname{div}}
\newcommand{\dt}{d_t}

\title[Optimal control of thermo-poroelasticity]{Numerical approximation of distributed optimal control problems governed by linear thermo-poroelasticity}
\author{Jeonghun J. Lee}
\address{Department of Mathematics, Baylor University}
\email{Jeonghun\_Lee@baylor.edu}

\begin{document}


\begin{abstract}
	In this paper, we propose and analyze a novel three-field symmetric formulation for optimal control problems governed by linear thermo-poroelasticity. The state variables are the solid displacement $\boldsymbol{u}$, the fluid pressure $p$, and the temperature $\theta$, and both the distributed fluid source $m_p$ and the distributed heat source $m_\theta$ serve as control variables. We prove well-posedness of the symmetric differentiated formulation, establish the existence and uniqueness of an optimal control pair $(\bar{m}_p,\bar{m}_\theta)$, and derive the first-order necessary optimality conditions via a coupled three-field adjoint system. For the fully discrete scheme based on dG(0) time discretization, we employ a variational discretization approach for both controls and derive a priori error estimates of order $\mathcal{O}(h^s + \Delta t)$ for the state, adjoint, and control errors. We also present manufactured-solution experiments that validate the fully discrete optimality system and illustrate the storage-degenerate regime covered by our analysis.
\end{abstract}
\keywords{Thermo-poroelasticity; Biot's consolidation model; optimal control; heat source control; fluid source control; finite element methods; variational discretization; a priori error estimates.}
\subjclass{65M12, 65M15, 65M60, 76S99}

\maketitle

\section{Introduction}
Mathematical models that describe how fluids move and how materials deform within porous structures are essential in many fields, such as geophysics, engineering, medicine and biological applications, such as in the development of the robust mechanical modelling of tissues and cells \cite{malandrino2019poroelasticity} and the research on the poroelastic component of the postseismic process \cite{mccormack2020modeling}. These models help us understand fluid flow in natural and artificial materials like cartilage, bones, tissue scaffolds, and even blood flow through organs like the brain, liver, and eyes. In many of these settings, mechanical deformation and fluid flow are also strongly coupled with heat transfer: temperature gradients drive additional fluid and solid motion through thermal expansion, and conversely deformation and fluid flux alter the temperature field. Such thermally coupled, or \emph{thermo-poroelastic}, effects are central to geothermal energy extraction, the disposal of heat-generating nuclear waste in deep geological repositories, thermally enhanced oil recovery, and the thermal regulation of biological tissue, so a rigorous mathematical and numerical treatment of the coupled three-field system is of independent interest beyond the isothermal setting. Because of their wide range of applications, flow problems in porous media, and their thermally coupled extensions, have become a major topic of study in mathematics. Many researchers have focused on understanding the stability of these models, analyzing how sensitive they are to changes, and developing reliable numerical methods to simulate them.

The foundational model in this area is Biot's consolidation model \cite{biot1941general}, which describes the interaction between interstitial fluid flowing through deformable porous media and has many applications, such as groundwater \cite{Kim1999549}, biological tissues \cite{Swan200325}, carbon sequestration \cite{WANGEN2016486}, and materials science \cite{MR2273503}. It plays a central role in simulating organ behaviour in medical applications and predicting how porous rocks respond in geophysical studies. The thermo-poroelastic system studied in this paper extends Biot's model by additionally coupling the temperature field to the displacement and pressure, reflecting the thermo-hydro-mechanical interactions present in the applications above. In recent years, finite element methods have been widely developed and studied as powerful tools for solving Biot's model, and its thermally coupled extensions, accurately and efficiently.

\subsection{The thermo-poroelastic consolidation model}\label{sec:Biot-model}
In this paper, we focus on quasi-static thermo-poroelastic consolidation in the spirit of \cite{MR3803860}, extending the three-field poroelastic formulation therein by the addition of a coupled temperature field. Suppose that $\Omega \subset \mathbb{R}^d$, $d =2,3$, is a bounded domain with Lipschitz polygonal/polyhedral boundary $\partial \Omega$. We assume three independent partitions of $\partial \Omega$:
\begin{align*}
	\partial \Omega &= \Gamma_p \cup \Gamma_{f}, & \Gamma_p \cap \Gamma_f &= \emptyset, & |\Gamma_p| &> 0, \\
	\partial \Omega &= \Gamma_d \cup \Gamma_{t}, & \Gamma_d \cap \Gamma_t &= \emptyset, & |\Gamma_d|, |\Gamma_t| &> 0, \\
	\partial \Omega &= \Gamma_\theta \cup \Gamma_{\theta,f}, & \Gamma_\theta \cap \Gamma_{\theta,f} &= \emptyset, & |\Gamma_\theta| &> 0.
\end{align*}
governing the fluid, mechanical, and thermal boundary conditions, respectively. Here $\Gamma_t$ is used for traction boundary conditions and it is not related to the time variable $t$. We seek the displacement $\boldsymbol{u}(t): \Omega \rightarrow \mathbb{R}^{d}$, the fluid pressure $p(t): \Omega \rightarrow \mathbb{R}$, and the temperature $\theta(t): \Omega \rightarrow \mathbb{R}$ satisfying
\begin{subequations}
    \label{sys-eqs}
\begin{align}
	\label{sys1} - \text{div} (\boldsymbol{\mathcal{C}} \epsilon(\boldsymbol{u})) + \alpha_p \nabla p + \alpha_\theta \nabla \theta &= \tilde{\boldsymbol{f}} && \text{in} \ (0,T] \times \Omega,\\
	\label{sys2} s_{pp} \dot{p} + \alpha_p \text{div}(\dot{\boldsymbol{u}}) + s_{p\theta}\dot{\theta} -  \text{div} (\kappa_p \nabla p) &= -m_p && \text{in} \ (0,T] \times \Omega,\\
	\label{sys3} s_{\theta\theta} \dot{\theta} + \alpha_\theta \text{div}(\dot{\boldsymbol{u}}) + s_{\theta p}\dot{p} - \text{div}(\kappa_\theta \nabla \theta) &= -m_\theta && \text{in} \ (0,T] \times \Omega,
\end{align}
\end{subequations}
where $\mathcal{C}$ is the elastic stiffness tensor, $\epsilon(\boldsymbol{u}) = \frac{1}{2}\left(\nabla \boldsymbol{u} + \left(\nabla \boldsymbol{u}\right)^{T}\right)$ is the strain tensor, $\alpha_p >0$ is the Biot--Willis coefficient and $\alpha_\theta > 0$ is the thermal expansion coupling coefficient, $\tilde{\boldsymbol{f}}(t): \Omega \rightarrow \mathbb{R}^{d}$ is the body force, $s_{pp} \ge 0$ is the specific storage coefficient, $s_{\theta\theta} \ge 0$ is the volumetric heat capacity, $s_{p\theta} = s_{\theta p}$ are the symmetric thermo-poroelastic coupling coefficients, $\kappa_p$ is the permeability tensor and $\kappa_\theta$ is the thermal conductivity tensor (both $d\times d$ symmetric positive definite with constant entries), $m_p(t):\Omega\to\mathbb{R}$ is the distributed fluid source (control), and $m_\theta(t):\Omega\to\mathbb{R}$ is the distributed heat source (control). The dot denotes the time derivative. 
For isotropic elastic porous media, $\boldsymbol{\mathcal{C}} \epsilon(\boldsymbol{u}) = 2\mu  \epsilon(\boldsymbol{u}) + \lambda \text{div}(\boldsymbol{u}) \mathbb{I}$,
where $\mathbb{I}\in \mathbb{R}^{d\times d}$ is the identity matrix, $\lambda$ and $\mu$ are Lam\'{e} constants related to the Young's elasticity modulus $E$ and the Poisson’s ratio $0<\nu<1/2$ as
\begin{align*}
	\lambda = \frac{\nu E}{(1-2\nu)(1+\nu)},\qquad \qquad \mu = \frac{E}{2(1+\nu)}.
\end{align*}
The boundary conditions are given by
\begin{align*}
	p(t) &=0 \quad \text{on} \ \Gamma_p, & -\kappa_p \nabla p(t) \cdot \boldsymbol{n}  &= 0 \quad \text{on} \ \Gamma_{f},
    \\
	\boldsymbol{u}(t) &=0 \quad \text{on} \ \Gamma_d, & \ \ (\boldsymbol{\mathcal{C}} \epsilon(\boldsymbol{u}(t)) - \alpha_p p(t) \mathbb{I} - \alpha_\theta \theta(t) \mathbb{I}) \boldsymbol{n} &= \boldsymbol{0} \quad \text{on} \ \Gamma_t,\\
	\theta(t) &= 0 \quad \text{on} \ \Gamma_\theta, & -\kappa_\theta \nabla \theta(t) \cdot \boldsymbol{n} &= 0 \quad \text{on} \ \Gamma_{\theta,f},
\end{align*}
for all $t \in (0,T]$, where $\boldsymbol{n}$ is the outward unit normal on $\partial \Omega$. In the rest of this paper we assume homogeneous boundary conditions for simplicity of presentation.

In this paper we consider a reformulation of \eqref{sys-eqs} by taking the time derivative of the mechanical equation \eqref{sys1}:
\begin{subequations}
	\label{eq:new-strong-eqs}
\begin{align}
	\label{eq:new-strong-eq1} - \text{div} (\boldsymbol{\mathcal{C}} \epsilon(\dot{\boldsymbol{u}})) + \alpha_p \nabla \dot{p} + \alpha_\theta \nabla \dot{\theta} &= {\boldsymbol{f}} && \text{in} \ (0,T] \times \Omega,\\
	\label{eq:new-strong-eq2} s_{pp} \dot{p} + \alpha_p \text{div}(\dot{\boldsymbol{u}}) + s_{p\theta}\dot{\theta} -  \text{div} (\kappa_p \nabla p) &= -m_p && \text{in} \ (0,T] \times \Omega,\\
	\label{eq:new-strong-eq3} s_{\theta\theta} \dot{\theta} + \alpha_\theta \text{div}(\dot{\boldsymbol{u}}) + s_{\theta p}\dot{p} - \text{div}(\kappa_\theta \nabla \theta) &= -m_\theta && \text{in} \ (0,T] \times \Omega,
\end{align}
\end{subequations}
where $\boldsymbol{f}:=\dot{\tilde{\boldsymbol{f}}}$. This reformulation converts the algebraic mechanical equation into a differential equation. 
A solution of \eqref{eq:new-strong-eqs} is also a solution of \eqref{sys-eqs} provided the initial data $(\boldsymbol{u}(0),p(0),\theta(0))$ satisfies the mechanical equilibrium equation \eqref{sys1} at $t=0$, i.e.,
    \[
        -\operatorname{div}(\boldsymbol{\mathcal{C}}\epsilon(\boldsymbol{u}(0)))
        + \alpha_p \nabla p(0) + \alpha_\theta \nabla \theta(0) = \tilde{\boldsymbol{f}}(0).
    \]
In the analysis below we treat \eqref{eq:new-strong-eqs} as the primary state system. The well-posedness, optimal control, and error analysis are stated for this differentiated formulation, and the original system \eqref{sys-eqs} is recovered whenever the above compatibility condition holds.


We assume that the storage--coupling matrix
\begin{align}\label{eq:storage-matrix}
	\boldsymbol{S} := \begin{pmatrix} s_{pp} & s_{p\theta} \\ s_{\theta p} & s_{\theta\theta} \end{pmatrix}
\end{align}
is symmetric positive semidefinite, i.e.\ $s_{pp},s_{\theta\theta}\ge 0$ and $s_{pp}s_{\theta\theta}\ge s_{p\theta}^2$. 
Writing $c:=(\alpha_p,\alpha_\theta)^\top$ and $c^\perp:=(-\alpha_\theta,\alpha_p)^\top$, we assume in addition the \emph{effective storage condition}
\begin{align}\label{eq:S-kernel-condition}
    (c^\perp)^\top \boldsymbol{S}\, c^\perp = \alpha_\theta^2 s_{pp} - 2\alpha_p\alpha_\theta s_{p\theta} + \alpha_p^2 s_{\theta\theta} \ge C_s > 0,
\end{align}
for some $C_s>0$. 
This condition is strictly weaker than $\boldsymbol{S}>0$: it allows $\boldsymbol{S}$ to be degenerate, but not in the direction $c^\perp$ that is invisible to the mechanical coupling $\alpha_p p+\alpha_\theta\theta$. In particular, \eqref{eq:S-kernel-condition} allows the storage-degenerate case $s_{pp}=0$ or $s_{\theta\theta}=0$. The same effective-storage structure will also be used in the discrete analysis, where it is combined with a discrete inf-sup condition for the mechanically visible combination $\alpha_p q_h+\alpha_\theta\psi_h$ (see Section~\ref{Finite element approximation:}).

A broad range of discretization techniques has been developed for
Biot-type poroelasticity, including conforming and nonconforming finite elements, mixed methods, discontinuous Galerkin methods, HDG and embedded-HDG schemes, virtual element methods, hybrid high-order methods, and least-squares formulations
\cite{MR2177147,phillips2008coupling,MR3047799,MR3606362,MR3803860,
MR3907413,MR4659441,MR4221326,MR4636155,MR3504993,MR4405491}.


Numerical methods for thermo-poroelasticity problems have attracted increasing attention in recent years. Brun et al. studied a class of monolithic and splitting schemes in \cite{MR4146798}. Antonietti et al. proposed high-order discontinuous Galerkin approaches on polygonal and polyhedral grids \cite{AntoniettiBonettiBotti2023}. Zhang and Rui \cite{ZhangRui2022} developed a Galerkin method for the fully coupled problem. Yi and Lee \cite{YiLee2024} introduced a physics-preserving enriched Galerkin method for the fully coupled model, proving well-posedness and optimal a priori error estimates together with local conservation properties. Chen et al. analyzed a sequential symmetric interior penalty discontinuous Galerkin scheme for fully coupled quasi-static thermo-poroelasticity problems \cite{ChenCuiZhou2026}.
These works discuss stable discretization methods, iterative solution strategies, and a priori error estimates for forward thermo-poroelastic models. In contrast, optimal control problems governed by fully coupled thermo-poroelasticity appear to have received little attention. The present paper addresses this gap by analyzing a distributed optimal control problem for a three-field thermo-poroelastic system, deriving the associated adjoint optimality system and proving fully discrete error estimates for the state, adjoint, and control variables.

\subsection{Primary contributions}

Optimal control problems arise naturally in many geophysics applications, where controlling subsurface flow processes is important for understanding and influencing systems. One prominent example is the enhanced geothermal systems and the analysis of how multiphysical behaviors of fluid flows and heat diffusion are controlled by fluid injection.  
Despite the relevance of such models, the literature on optimal control problems governed by poroelastic or thermo-poroelastic systems remains rather limited. In this direction, Bociu et al. \cite{MR4410836} studied a linear–quadratic elliptic–parabolic optimal control problem describing fluid flow in deformable porous media, under the assumption of a vanishing specific storage coefficient $s_{0} = 0$. Recently, numerical methods for distributed optimal control problems governed by poroelasticity equations were studied in \cite{KhanLeeSingh2026} via a reformulation to differential equations. 
To the best of our knowledge, the numerical analysis of optimal control problems governed by thermo-poroelastic systems with distributed controls has not yet been investigated in the existing literature. 

The main contribution of this work is threefold. First, we identify an
effective-storage structure for the differentiated thermo-poroelastic
system. The elastic Schur complement controls the mechanically visible
combination of \(\alpha_p p+\alpha_\theta\theta\), 
while the storage matrix
needs to control only the complementary direction \(c^\perp=(-\alpha_\theta,\alpha_p)^T\). 
This yields well-posedness and
stability under the conditions $\boldsymbol S\ge0$ and \eqref{eq:S-kernel-condition} which are weaker than the assumption \(\boldsymbol S>0\). In particular, this establishes a theoretical foundation to analyze thermo-poroelasticity problems with degenerate storage coefficient $s_{pp}=0$ so long as $\boldsymbol S\ge0$ and \eqref{eq:S-kernel-condition} hold.
Second, we carry this effective-storage mechanism to the fully discrete
level. By introducing a discrete effective mass form, we prove stability
and a priori error estimates for appropriate mixed finite elements for spatial discretization and dG(0) scheme for temporal discretization, which is algebraically equivalent to backward Euler on each time interval.
The discrete stability requirement is an inf-sup condition for the
mechanically visible combination
\(\alpha_p q_h+\alpha_\theta\psi_h\), not a separate control of the
pressure and temperature variables through a positive definite storage
matrix.
Third, to the best of our knowledge, this provides one of the first
complete numerical analyses of a distributed optimal control problem
governed by a coupled three-field thermo-poroelastic system with both
fluid-source and heat-source controls. The analysis includes
well-posedness of the state system, derivation of the coupled adjoint
optimality system, fully discrete error estimates of order
\(\mathcal O(h^s+\Delta t)\), and numerical tests illustrating both the
predicted convergence behavior and the storage-degenerate regime covered
by the effective-storage theory.

\subsection{Organization of the paper}
The rest of the paper is organized as follows. Section~\ref{sec:well-posedness} establishes well-posedness of the three-field thermo-poroelastic system, proves existence and uniqueness of the optimal control pair $(\bar{m}_p,\bar{m}_\theta)$, and derives the optimality conditions. Section~\ref{discrete formulation} introduces the fully discrete finite element formulation and the discrete optimality system for both controls. Section~\ref{A priori Error analysis} derives a priori error estimates for the state, adjoint, and controls: the displacement is estimated in a maximum-in-time $H^1(\Omega)$-norm, the pressure and temperature in maximum-in-time $L^2(\Omega)$-norms and space-time $L^2$-norms, the adjoint pressure and temperature in $L^2(0,T;L^2(\Omega))$, and the controls in $L^2(0,T;L^2(\Omega))$, using variational discretization. 
Section~\ref{Numerical Experiments} reports a three-field manufactured-solution validation of the fully discrete optimality system and outlines additional numerical studies.
\section{Well-posedness}\label{sec:well-posedness}
In this section, we discuss well-posedness of the three-field thermo-poroelastic model, existence of an optimal control pair, and the first-order optimality conditions. We begin by introducing the necessary notation.
\subsection{Notations}
For a Banach space $X$ with norm $\|\cdot \|_X$ we will use $L^r(0,t; X)$, $C^0(0,t; X)$ to denote \vspace{-2mm}
\begin{align*}
	\| f \|_{L^r(0,t;X)} &:= \left( \int_0^t \| f(s) \|_X^r \,ds \right)^{1/r} 
	\qquad \text{and} \qquad
	\| f \|_{C^0(0,t;X)} := \sup_{0\le s\le t} \| f(s) \|_X. 
\end{align*}
For functions $v, w \in L^2(D)$ let
\begin{align*}
    (v, w)_{D} := \int_{D} v w \,dx ,
\end{align*}
and this definition is naturally extended to vector-valued or tensor-valued $L^2(D)$ functions $v, w$. 
We consider the following spaces:
\begin{align}
	\boldsymbol{V}&= \{\boldsymbol{v} \in H^1(\Omega; \mathbb{R}^d) \,:\, \boldsymbol{v}|_{\Gamma_d} = 0\},\qquad
	Q= \{ q \in H^1(\Omega) \,:\, q|_{\Gamma_p} = 0 \},\\
	\Psi &= \{ \psi \in H^1(\Omega) \,:\, \psi|_{\Gamma_\theta} = 0 \}.
\end{align}
We use $\boldsymbol{L}^2(\Omega)$ and $\boldsymbol{H}^1(\Omega)$ to emphasize function spaces of vector-valued functions. However, we do not use boldface symbols to denote norms of vector-valued functions, i.e., $\| \cdot \|_{L^2(\Omega)}$ will be used for the $L^2(\Omega)$ norms of scalar- and vector-valued functions.

To formulate the optimal control problem, we introduce two admissible control sets. For the distributed fluid source $m_p$ and the distributed heat source $m_\theta$, we define
\begin{align}
	\label{admcontrol-p}
	\mathcal{M}_{ad}^{p}&:= \{m_p \in L^{2}(0,T;L^{2}(\Omega)): m_{p,a}\le m_p(t,x) \le m_{p,b} \ \text{a.e.\ in} \ (0,T) \times \Omega\},\\
	\label{admcontrol-theta}
	\mathcal{M}_{ad}^{\theta}&:= \{m_\theta \in L^{2}(0,T;L^{2}(\Omega)): m_{\theta,a}\le m_\theta(t,x) \le m_{\theta,b} \ \text{a.e.\ in} \ (0,T) \times \Omega\},
\end{align}
where $m_{p,a} < m_{p,b}$ and $m_{\theta,a} < m_{\theta,b}$ are given real constants, and we set $\mathcal{M}_{ad} := \mathcal{M}_{ad}^p \times \mathcal{M}_{ad}^\theta$.
\subsection{Well-posedness}\label{subsec:well-posedness}

The bilinear forms $a_{\boldsymbol{u}}:\boldsymbol{V} \times \boldsymbol{V} \rightarrow \mathbb{R}$, $b_p:Q \times \boldsymbol{V} \rightarrow \mathbb{R}$, $b_\theta:\Psi\times\boldsymbol{V}\to\mathbb{R}$, $a_p:Q \times Q \to \mathbb{R}$, and $a_\theta:\Psi\times \Psi\to\mathbb{R}$ are defined as
	\begin{align*}
		a_{\boldsymbol{u}}(\boldsymbol{v}, \boldsymbol{w}) &:= (\boldsymbol{\mathcal{C}} \epsilon(\boldsymbol{v}), \epsilon(\boldsymbol{w}))_\Omega, 
        \\
		b_p(q,\boldsymbol{v}) &:= - \alpha_p (q, \dive( \boldsymbol{v}))_\Omega, \qquad b_\theta(\psi,\boldsymbol{v}) := -\alpha_\theta (\psi, \dive(\boldsymbol{v}))_\Omega,
        \\
        a_p(q,r) &:= (\kappa_p \nabla q, \nabla r)_\Omega, \qquad a_\theta(\psi,\zeta) := (\kappa_\theta \nabla\psi,\nabla\zeta)_\Omega.
	\end{align*}
We set 
\begin{align}
    \| \boldsymbol{v}\|_{a_{\boldsymbol{u}}} := (a_{\boldsymbol{u}} (\boldsymbol{v}, \boldsymbol{v}))^{\frac 12}, \qquad     \| q\|_{a_{p}} := (a_{p} (q, q))^{\frac 12},\qquad \|\psi\|_{a_\theta} := (a_\theta(\psi,\psi))^{1/2}.
\end{align}
%

We consider the following variational equations derived from \eqref{eq:new-strong-eqs}: 
\begin{subequations}\label{eq:new-weak-n-eqs}
	\begin{align}
		\label{eq:new-weak-n-eq1} 
        a_{\boldsymbol{u}} (\dot{\boldsymbol{u}}(t), \boldsymbol{v}) + b_p(\dot{p}(t),\boldsymbol{v}) + b_\theta(\dot{\theta}(t),\boldsymbol{v}) &= (\boldsymbol{f}(t), \boldsymbol{v})_{\Omega}, \\
		\label{eq:new-weak-n-eq2} 
		b_p(q,\dot{\boldsymbol{u}}(t)) - (s_{pp} \dot{p}(t), q)_{\Omega} - (s_{p\theta}\dot{\theta}(t),q)_\Omega - a_p(p(t), q) &= (m_p(t), q)_{\Omega}, \\
		\label{eq:new-weak-n-eq3}
		b_\theta(\psi,\dot{\boldsymbol{u}}(t)) - (s_{\theta p}\dot{p}(t),\psi)_\Omega - (s_{\theta\theta} \dot{\theta}(t), \psi)_{\Omega} - a_\theta(\theta(t), \psi) &= (m_\theta(t), \psi)_{\Omega}
	\end{align}
\end{subequations}
for all $(\boldsymbol{v}, q, \psi) \in \boldsymbol{V} \times Q \times \Psi$ and for all $t > 0$; well-posedness of this system is established in Lemma~\ref{lem:effective-storage} below via a mechanical elimination argument, under the effective storage condition \eqref{eq:S-kernel-condition}. 

Throughout the well-posedness and error analysis below, we use the following fact about the divergence operator on $\boldsymbol{V}$, in addition to the effective storage condition \eqref{eq:S-kernel-condition}: since $\boldsymbol{v}\in\boldsymbol{V}$ is constrained only on $\Gamma_d$ with $|\Gamma_d|>0$ and is otherwise free on the traction boundary $\Gamma_t$ with $|\Gamma_t|>0$, the divergence operator $\operatorname{div}:\boldsymbol{V}\to L^2(\Omega)$ is surjective. 
This surjectivity is equivalent to the following inf-sup condition: there exists $C>0$ such that
\begin{equation}\label{eq:continuous-div-infsup}
	\|\chi\|_{L^2(\Omega)} \le C \sup_{0\ne\boldsymbol{v}\in\boldsymbol{V}} \frac{|(\chi,\operatorname{div}\boldsymbol{v})_\Omega|}{\|\boldsymbol{v}\|_{\boldsymbol{V}}} \qquad \forall \chi\in L^2(\Omega).
\end{equation}
%

We prove existence and uniqueness of \eqref{eq:new-weak-n-eqs}.
\begin{lemma}\label{lem:effective-storage}
Assume the effective storage condition \eqref{eq:S-kernel-condition}, and write $X:=Q\times\Psi$, $H:=L^2(\Omega)\times L^2(\Omega)$, so that for $y=(q,\psi)$, $\|y\|_H^2:=\|q\|_{L^2(\Omega)}^2+\|\psi\|_{L^2(\Omega)}^2$ and $\|y\|_X^2:=\|q\|_Q^2+\|\psi\|_\Psi^2$. For $y=(q,\psi)\in H$, define the \emph{elastic lift} $\boldsymbol{u}_y\in\boldsymbol{V}$ by
\begin{equation}\label{eq:elastic-lift}
	a_{\boldsymbol{u}}(\boldsymbol{u}_y,\boldsymbol{v}) + b_p(q,\boldsymbol{v}) + b_\theta(\psi,\boldsymbol{v}) = 0 \qquad \forall \boldsymbol{v}\in\boldsymbol{V};
\end{equation}
by coercivity of $a_{\boldsymbol{u}}$, this has a unique solution satisfying
\begin{equation}\label{eq:elastic-lift-bound}
	\|\boldsymbol{u}_y\|_{\boldsymbol{V}} \le C\bigl(\|q\|_{L^2(\Omega)}+\|\psi\|_{L^2(\Omega)}\bigr),
\end{equation}
so $y\mapsto\boldsymbol{u}_y$ is a bounded linear map $H\to\boldsymbol{V}$. For $\boldsymbol{f}(t)\in\boldsymbol{V}'$, define $\boldsymbol{u}_{\boldsymbol{f}}(t)\in\boldsymbol{V}$ by
\begin{equation}\label{eq:uf-def}
	a_{\boldsymbol{u}}(\boldsymbol{u}_{\boldsymbol{f}}(t),\boldsymbol{v}) = \langle \boldsymbol{f}(t),\boldsymbol{v}\rangle \qquad \forall \boldsymbol{v}\in\boldsymbol{V},
\end{equation}
defined pointwise in $t$ from $\boldsymbol{f}(t)$ alone. Define the effective mass bilinear form on $H$ by
\begin{equation}\label{eq:effective-mass}
   m_{\rm eff}(y,z) := (\boldsymbol{S}y,z)_\Omega + a_{\boldsymbol{u}}(\boldsymbol{u}_y,\boldsymbol{u}_z), \qquad y=(q,\psi),\ z=(q',\psi').
\end{equation}
Then, the bilinear form $m_{\rm eff}$ is symmetric, bounded, and coercive on $H$. Moreover, a triple $(\boldsymbol{u},p,\theta)$ solves \eqref{eq:new-weak-n-eq1}--\eqref{eq:new-weak-n-eq3} for a.e.\ $t\in(0,T)$ if and only if $y:=(p,\theta)$ solves the reduced parabolic problem: find $y=(p,\theta)$ such that, for all $z=(q,\psi)\in Q\times\Psi$,
    \begin{multline}\label{eq:reduced-parabolic}
	   m_{\rm eff}(\dot y,z) + a_p(p,q) + a_\theta(\theta,\psi) 
       \\
       = -(m_p,q)_\Omega - (m_\theta,\psi)_\Omega + b_p(q,\boldsymbol{u}_{\boldsymbol{f}}(t)) + b_\theta(\psi,\boldsymbol{u}_{\boldsymbol{f}}(t)).
    \end{multline}
    and
	\begin{equation}\label{eq:u-reconstruction}
		\dot{\boldsymbol{u}} = \boldsymbol{u}_{\boldsymbol{f}} + \boldsymbol{u}_{\dot y}.
	\end{equation}
\end{lemma}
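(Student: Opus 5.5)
The plan is to treat the three claims about $m_{\rm eff}$ (symmetry, boundedness, coercivity) first, and then prove the equivalence by eliminating $\dot{\boldsymbol u}$ from \eqref{eq:new-weak-n-eq1} by linearity.

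\textbf{Symmetry and boundedness.} Symmetry is immediate, since $\boldsymbol S$ is symmetric and $a_{\boldsymbol u}$ is symmetric. Boundedness follows from $|(\boldsymbol S y,z)_\Omega|\le|\boldsymbol S|\,\|y\|_H\|z\|_H$ together with continuity of $a_{\boldsymbol u}$ and the lift bound \eqref{eq:elastic-lift-bound}.

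\textbf{Coercivity.} This is the main step. For $y=(q,\psi)$ write $\chi_y:=\alpha_p q+\alpha_\theta\psi=c\cdot y$ and $\eta_y:=c^\perp\cdot y$. Testing \eqref{eq:elastic-lift} with $\boldsymbol v=\boldsymbol u_y$ gives
\[
a_{\boldsymbol u}(\boldsymbol u_y,\boldsymbol u_y)=(\chi_y,\dive\boldsymbol u_y)_\Omega .
\]
More importantly, \eqref{eq:elastic-lift} reads $(\chi_y,\dive\boldsymbol v)_\Omega=a_{\boldsymbol u}(\boldsymbol u_y,\boldsymbol v)$ for every $\boldsymbol v\in\boldsymbol V$. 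Combining this with the inf-sup condition \eqref{eq:continuous-div-infsup} and continuity of $a_{\boldsymbol u}$ yields
\[
\|\chi_y\|_{L^2(\Omega)}\le C\|\boldsymbol u_y\|_{a_{\boldsymbol u}} .
\]
So the elastic part controls the mechanically visible component.

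For the complementary direction I use the pointwise orthogonal decomposition
\[
y=\frac{\chi_y}{|c|^2}\,c+\frac{\eta_y}{|c|^2}\,c^\perp=:w_1+w_2 .
\]
Since $\boldsymbol S\ge0$, we have $(\boldsymbol S w_2,w_2)\le 2(\boldsymbol S y,y)+2(\boldsymbol S w_1,w_1)$ pointwise. Applying \eqref{eq:S-kernel-condition} to the left side and integrating gives
\[
\frac{C_s}{|c|^4}\|\eta_y\|^2\le 2(\boldsymbol S y,y)_\Omega+\frac{2|\boldsymbol S|}{|c|^2}\|\chi_y\|^2 .
\]
Now take $\delta\in(0,1]$ small and use $(\boldsymbol Sy,y)_\Omega\ge\delta(\boldsymbol Sy,y)_\Omega$ to get
\[
m_{\rm eff}(y,y)\ge \tfrac{\delta C_s}{2|c|^4}\|\eta_y\|^2+\Bigl(C^{-2}-\tfrac{\delta|\boldsymbol S|}{|c|^2}\Bigr)\|\chi_y\|^2 .
\]
For $\delta$ small the bracket is positive. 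Since $\|y\|_H^2=|c|^{-2}\bigl(\|\chi_y\|^2+\|\eta_y\|^2\bigr)$, coercivity on $H$ follows.

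\textbf{Equivalence, forward direction.} Suppose $(\boldsymbol u,p,\theta)$ solves \eqref{eq:new-weak-n-eqs}. By linearity of \eqref{eq:elastic-lift} and \eqref{eq:uf-def}, $\boldsymbol u_{\boldsymbol f}+\boldsymbol u_{\dot y}$ satisfies \eqref{eq:new-weak-n-eq1}. Hence $a_{\boldsymbol u}\bigl(\dot{\boldsymbol u}-\boldsymbol u_{\boldsymbol f}-\boldsymbol u_{\dot y},\boldsymbol v\bigr)=0$ for all $\boldsymbol v$, and coercivity gives \eqref{eq:u-reconstruction}. Substitute this into \eqref{eq:new-weak-n-eq2}--\eqref{eq:new-weak-n-eq3} and add them. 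The key identity comes from the lift equation for $z=(q,\psi)$ tested with $\boldsymbol v=\boldsymbol u_{\dot y}$:
\[
b_p(q,\boldsymbol u_{\dot y})+b_\theta(\psi,\boldsymbol u_{\dot y})=-a_{\boldsymbol u}(\boldsymbol u_z,\boldsymbol u_{\dot y}).
\]
The storage terms combine into $-(\boldsymbol S\dot y,z)_\Omega$. Multiplying the sum by $-1$ then gives exactly \eqref{eq:reduced-parabolic}.

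\textbf{Equivalence, converse.} Given $y$ solving \eqref{eq:reduced-parabolic}, define $\dot{\boldsymbol u}$ by \eqref{eq:u-reconstruction}, so that \eqref{eq:new-weak-n-eq1} holds by construction. Choosing $z=(q,0)$ and $z=(0,\psi)$ separately in the reduced problem and reversing the computation above recovers \eqref{eq:new-weak-n-eq2} and \eqref{eq:new-weak-n-eq3}.

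\textbf{Main obstacle.} The hard part is coercivity: the storage matrix is degenerate, so the $c^\perp$ component must be controlled through \eqref{eq:S-kernel-condition}, and the $c$ component through the lift and \eqref{eq:continuous-div-infsup}. Everything else is linear-algebraic bookkeeping, apart from the mild requirement that $\dot y\in H$ so that $\boldsymbol u_{\dot y}$ is defined.
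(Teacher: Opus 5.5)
Your proposal is correct and follows the same route as the paper. The $c$-component $\alpha_p q+\alpha_\theta\psi$ is controlled by the elastic lift through the inf-sup condition \eqref{eq:continuous-div-infsup}, and the $c^\perp$-component by \eqref{eq:S-kernel-condition}. The equivalence comes from eliminating $\dot{\boldsymbol u}$ with the identity $b_p(q,\boldsymbol u_{\dot y})+b_\theta(\psi,\boldsymbol u_{\dot y})=-a_{\boldsymbol u}(\boldsymbol u_z,\boldsymbol u_{\dot y})$.

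The differences are minor. You prove the pointwise bound $|\eta|^2\le C(|c\cdot\eta|^2+\eta^\top\boldsymbol S\eta)$ constructively, via orthogonal splitting, the seminorm triangle inequality and $\delta$-absorption, instead of by the paper's kernel argument, which gives explicit constants. You also state explicitly the coercivity-based uniqueness step that forces \eqref{eq:u-reconstruction} in the forward direction.
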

\begin{proof}
    Boundedness and symmetry of $m_{\rm eff}$ follow from the definition and \eqref{eq:elastic-lift-bound}. Recall the elastic lift $y\mapsto\boldsymbol{u}_y$ defined by \eqref{eq:elastic-lift}. Since $b_p(q,\boldsymbol{v})+b_\theta(\psi,\boldsymbol{v}) = -(\alpha_p q+\alpha_\theta\psi,\operatorname{div}\boldsymbol{v})_\Omega$, the inf-sup condition \eqref{eq:continuous-div-infsup} gives
\begin{equation}\label{eq:elastic-controls-c-direction}
	\|\alpha_p q+\alpha_\theta\psi\|_{L^2(\Omega)} \le C\|\boldsymbol{u}_y\|_{\boldsymbol{V}} .
\end{equation}
By the effective storage condition \eqref{eq:S-kernel-condition}, the quadratic form $\eta\mapsto |c\cdot\eta|^2+\eta^\top\boldsymbol{S}\eta$ on $\mathbb{R}^2$ (with $c=(\alpha_p,\alpha_\theta)^\top$) is positive definite: if it vanishes, then $c\cdot\eta=0$, so $\eta\in\mathrm{span}\{c^\perp\}$, and also $\eta\in\ker\boldsymbol{S}$, which forces $\eta=0$ by \eqref{eq:S-kernel-condition}. Hence there exists $C>0$ with
\[
	|\eta|^2 \le C\bigl(|c\cdot\eta|^2+\eta^\top\boldsymbol{S}\eta\bigr) \qquad \forall\eta\in\mathbb{R}^2,
\]
and integrating over $\Omega$ (with $\eta=(q(x),\psi(x))$) gives
\begin{equation}\label{eq:L2-effective-coercivity}
	\|q\|_{L^2}^2+\|\psi\|_{L^2}^2 \le C\bigl(\|\alpha_p q+\alpha_\theta\psi\|_{L^2}^2 + (\boldsymbol{S}y,y)_\Omega\bigr).
\end{equation}
Combining \eqref{eq:elastic-controls-c-direction} with coercivity of $a_{\boldsymbol{u}}$, $\|\alpha_p q+\alpha_\theta\psi\|_{L^2}^2 \le C\,a_{\boldsymbol{u}}(\boldsymbol{u}_y,\boldsymbol{u}_y)$ 
gives
\begin{equation}\label{eq:meff-coercive}
	m_{\rm eff}(y,y) \ge C_{\rm eff}\bigl(\|q\|_{L^2(\Omega)}^2+\|\psi\|_{L^2(\Omega)}^2\bigr) \qquad \forall y=(q,\psi)\in H.
\end{equation}

For equivalence of \eqref{eq:new-weak-n-eqs} and the reduced problem \eqref{eq:reduced-parabolic}, we now eliminate $\dot{\boldsymbol{u}}$ from \eqref{eq:new-weak-n-eq1} using \eqref{eq:elastic-lift}, recalling $\boldsymbol{u}_{\boldsymbol{f}}(t)$ from \eqref{eq:uf-def}. For $y(t):=(p(t),\theta(t))$, $\dot{\boldsymbol{u}}(t) = \boldsymbol{u}_{\boldsymbol{f}}(t) + \boldsymbol{u}_{\dot y(t)}$, as in \eqref{eq:u-reconstruction}, satisfies \eqref{eq:new-weak-n-eq1} because 
$$a_{\boldsymbol{u}}(\dot{\boldsymbol{u}}(t),\boldsymbol{v}) = a_{\boldsymbol{u}}(\boldsymbol{u}_{\boldsymbol{f}}(t),\boldsymbol{v}) + a_{\boldsymbol{u}}(\boldsymbol{u}_{\dot y(t)},\boldsymbol{v}) = \langle\boldsymbol{f}(t),\boldsymbol{v}\rangle - b_p(\dot p(t),\boldsymbol{v}) - b_\theta(\dot\theta(t),\boldsymbol{v})$$ 
by \eqref{eq:uf-def} and \eqref{eq:elastic-lift}, which rearranges to \eqref{eq:new-weak-n-eq1}. Substituting \eqref{eq:u-reconstruction} into \eqref{eq:new-weak-n-eq2}--\eqref{eq:new-weak-n-eq3} and using the identity $b_p(q,\boldsymbol{u}_{y})+b_\theta(\psi,\boldsymbol{u}_{y}) = -a_{\boldsymbol{u}}(\boldsymbol{u}_y,\boldsymbol{u}_z)$ with $z=(q,\psi)$ from \eqref{eq:elastic-lift}, gives the reduced problem \eqref{eq:reduced-parabolic}.
Conversely, given a solution $y=(p,\theta)$ of \eqref{eq:reduced-parabolic}, defining $\dot{\boldsymbol{u}}$ by \eqref{eq:u-reconstruction} and running the above identities in reverse shows $(\boldsymbol{u},p,\theta)$ solves \eqref{eq:new-weak-n-eq1}--\eqref{eq:new-weak-n-eq3}. 
\end{proof}

\begin{theorem}
    \label{thm:well-posedness}
    Assume the effective storage condition \eqref{eq:S-kernel-condition} holds.
    Given $m_p,m_\theta\in L^2(0,T;L^2(\Omega))$, $\boldsymbol{f}\in L^2(0,T;\boldsymbol{L}^2(\Omega))$, and $(p(0),\theta(0))\in H$, the reduced problem \eqref{eq:reduced-parabolic}
    has a unique solution $(p,\theta)\in L^2(0,T;X)\cap H^1(0,T;X')\cap C([0,T];H)$ satisfying
	\begin{multline}
	    \label{eq:basic-reduced-estimate}
		\|(p,\theta)\|_{L^\infty(0,T;H)} + \|(p,\theta)\|_{L^2(0,T;X)} 
        \\
        \le C\bigl(\|(p(0),\theta(0))\|_{H} + \|m_p\|_{L^2(0,T;L^2)} + \|m_\theta\|_{L^2(0,T;L^2)} + \|\boldsymbol{f}\|_{L^2(0,T;\boldsymbol{L}^2)}\bigr).
	\end{multline}
	If, in addition, $(p(0),\theta(0))\in X$, then $(p,\theta)\in H^1(0,T;H)\cap L^\infty(0,T;X)$ and
	\begin{multline}
        \label{eq:higher-reduced-estimate}
		\|(\dot p,\dot\theta)\|_{L^2(0,T;H)} + \|(p,\theta)\|_{L^\infty(0,T;X)} 
        \\
        \le C\bigl(\|(p(0),\theta(0))\|_{X} + \|m_p\|_{L^2(0,T;L^2)} + \|m_\theta\|_{L^2(0,T;L^2)} + \|\boldsymbol{f}\|_{L^2(0,T;\boldsymbol{L}^2)}\bigr),
	\end{multline}
    Moreover, if $\boldsymbol{u}(0) \in \boldsymbol{V}$ is also given, $\boldsymbol{u}(t) := \boldsymbol{u}(0) + \int_0^t \dot{\boldsymbol{u}}(s)\,ds$ satisfies 
    \begin{multline}        
        \|\boldsymbol{u}\|_{H^1(0,T;\boldsymbol{V})} 
        \\
        \le C\bigl(\|(p(0),\theta(0))\|_{X} + \|\boldsymbol{u}(0)\|_{\boldsymbol{V}} + \|m_p\|_{L^2(0,T;L^2)} + \|m_\theta\|_{L^2(0,T;L^2)} + \|\boldsymbol{f}\|_{L^2(0,T;\boldsymbol{L}^2)}\bigr) .
    \end{multline}
\end{theorem}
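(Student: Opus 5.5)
Lemma~\ref{lem:effective-storage} recasts \eqref{eq:reduced-parabolic} as an abstract linear parabolic problem in the Gelfand triple $X\hookrightarrow H\hookrightarrow X'$, with $H$ carrying the equivalent inner product $m_{\rm eff}(\cdot,\cdot)$. I would therefore prove existence and uniqueness by the Galerkin method (equivalently, J.-L.~Lions' theorem): take finite-dimensional subspaces $X_n\subset X$ with dense union, and solve the resulting linear ODE system. Its mass matrix $m_{\rm eff}$ is symmetric positive definite by \eqref{eq:meff-coercive}, so the ODE is uniquely solvable. Then derive uniform a priori bounds and pass to weak limits. The stiffness part $a(y,z):=a_p(p,q)+a_\theta(\theta,\psi)$ is coercive on $X$ by Poincar\'e, since $|\Gamma_p|,|\Gamma_\theta|>0$. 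The right-hand side $F(t)$ is a functional on $X$, in fact on $H$, with $\|F(t)\|_{H'}\le C(\|m_p\|+\|m_\theta\|+\|\boldsymbol{u}_{\boldsymbol f}(t)\|_{\boldsymbol V})$. Moreover $\|\boldsymbol{u}_{\boldsymbol f}\|_{\boldsymbol V}\le C\|\boldsymbol f\|_{L^2}$ by coercivity of $a_{\boldsymbol u}$ in \eqref{eq:uf-def}.

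For \eqref{eq:basic-reduced-estimate}, test with $z=y$. This gives
\[
\tfrac12\tfrac{d}{dt}m_{\rm eff}(y,y)+a(y,y)=F(y)\le \|F\|_{H'}\|y\|_H .
\]
Absorb via Young's inequality into the coercive term ($\|y\|_H\le C\|y\|_X$) and integrate in time. Then use the equivalence of $m_{\rm eff}(y,y)^{1/2}$ with $\|y\|_H$, coming from boundedness together with \eqref{eq:meff-coercive}. The $H^1(0,T;X')$ bound follows by reading $\dot y$ off the equation, using the Riesz isomorphism of $m_{\rm eff}$ on $H$. Continuity $C([0,T];H)$ then follows from the standard Lions--Magenes lemma, applied with the $m_{\rm eff}$-inner product on $H$. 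Uniqueness follows from the same energy identity with zero data.

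For \eqref{eq:higher-reduced-estimate}, with $y(0)\in X$, test the Galerkin system with $z=\dot y$. This gives
\[
m_{\rm eff}(\dot y,\dot y)+\tfrac12\tfrac{d}{dt}a(y,y)=F(\dot y)\le \tfrac{C_{\rm eff}}2\|\dot y\|_H^2+C\|F\|_{H'}^2 .
\]
Integrating yields the bounds on $\dot y$ in $L^2(0,T;H)$ and on $y$ in $L^\infty(0,T;X)$. Here I must take the Galerkin projection of $y(0)$ as the $a$-orthogonal projection onto $X_n$, so that the initial energy is bounded by $\|y(0)\|_X$. The key point that makes this work is that $F$ is bounded on $H$, not merely on $X$. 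This is precisely where the assumptions $m_p,m_\theta\in L^2L^2$ and $\boldsymbol f\in L^2\boldsymbol{L}^2$ are used, rather than any time derivative of the data.

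Finally, for the displacement, \eqref{eq:u-reconstruction} gives $\|\dot{\boldsymbol u}\|_{\boldsymbol V}\le \|\boldsymbol u_{\boldsymbol f}\|_{\boldsymbol V}+C\|\dot y\|_H$, using \eqref{eq:elastic-lift-bound}. This yields $\dot{\boldsymbol u}\in L^2(0,T;\boldsymbol V)$ with the bound from the previous step. Then $\|\boldsymbol u(t)\|_{\boldsymbol V}\le\|\boldsymbol u(0)\|_{\boldsymbol V}+\sqrt T\|\dot{\boldsymbol u}\|_{L^2(0,T;\boldsymbol V)}$, which controls the $L^2(0,T;\boldsymbol V)$ part and completes the $H^1(0,T;\boldsymbol V)$ bound.

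I expect the main technical obstacle to be the rigorous justification of the $\dot y$-testing step: the weak solution only has $\dot y\in L^2X'$. Performing that step at the Galerkin level, with a suitable projection of the initial data and weak-lower-semicontinuity passage to the limit, is the cleanest route.
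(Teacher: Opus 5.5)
Your proposal is correct and follows essentially the same route as the paper: recast \eqref{eq:reduced-parabolic} as an abstract parabolic problem in the Gelfand triple with $H$ carrying the $m_{\rm eff}$ inner product, test with $y$ for \eqref{eq:basic-reduced-estimate} and with $\dot y$ for \eqref{eq:higher-reduced-estimate} (using that the right-hand side is bounded on $H$), then recover $\dot{\boldsymbol{u}}$ from \eqref{eq:u-reconstruction} and \eqref{eq:elastic-lift-bound}. The differences are in detail only: you justify the $\dot y$-test through an explicit Galerkin construction with a suitable projection of $y(0)$, and you also bound the $L^2(0,T;\boldsymbol{V})$ part of $\boldsymbol{u}$ via $\boldsymbol{u}(0)$, whereas the paper cites standard parabolic theory and bounds only $\dot{\boldsymbol{u}}$ explicitly.
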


\begin{proof}
We prove existence of solutions of \eqref{eq:reduced-parabolic} via the theory of Gelfand triple.
Recall $X=Q\times\Psi$, $H=L^2(\Omega)\times L^2(\Omega)$. By \eqref{eq:meff-coercive}, $m_{\rm eff}$ is an inner product on $H$ equivalent to the standard one. For $y=(p,\theta)$ and $z=(q,\psi)$, set $a_X(y,z):=a_p(p,q)+a_\theta(\theta,\psi)$; this diffusion form is continuous and coercive on $X$. Hence \eqref{eq:reduced-parabolic} is an abstract linear parabolic problem in the Gelfand triple $X\hookrightarrow H\hookrightarrow X'$. Consequently, given $m_p,m_\theta\in L^2(0,T;L^2(\Omega))$, $\boldsymbol{f}\in L^2(0,T;\boldsymbol{L}^2(\Omega))$, 
and $(p(0),\theta(0))\in H$, standard abstract parabolic theory for the Gelfand triple gives a unique solution
\[
	(p,\theta)\in L^2(0,T;X)\cap H^1(0,T;X')\cap C([0,T];H)
\]
satisfying the basic estimate \eqref{eq:basic-reduced-estimate}.

For \eqref{eq:higher-reduced-estimate} note that the right-hand side of \eqref{eq:reduced-parabolic}, as a functional of $z=(q,\psi)$, in fact belongs to $L^2(0,T;H)$. Since $\boldsymbol{u}_{\boldsymbol{f}}(t)\in\boldsymbol{V}\subset\boldsymbol{H}^1(\Omega)$ for a.e.\ $t$, $\operatorname{div}\boldsymbol{u}_{\boldsymbol{f}}\in L^2(0,T;L^2(\Omega))$, so $q\mapsto b_p(q,\boldsymbol{u}_{\boldsymbol{f}}(t))=-\alpha_p(q,\operatorname{div}\boldsymbol{u}_{\boldsymbol{f}}(t))_\Omega$ and $\psi\mapsto b_\theta(\psi,\boldsymbol{u}_{\boldsymbol{f}}(t))$ are represented by the $L^2(\Omega)$-functions $-\alpha_p\operatorname{div}\boldsymbol{u}_{\boldsymbol{f}}(t)$, $-\alpha_\theta\operatorname{div}\boldsymbol{u}_{\boldsymbol{f}}(t)$. Together with $m_p,m_\theta\in L^2(0,T;L^2(\Omega))$, this gives an $H$-valued right-hand side, which is what is needed in the Gelfand triple theory. If, in addition, $(p(0),\theta (0))\in X$, the standard higher energy estimate for symmetric parabolic problems testing \eqref{eq:reduced-parabolic} with $z=\dot y$, using $m_{\rm eff}(\dot y,\dot y)\ge c_{\rm eff}\|\dot y\|_H^2$ and $\tfrac12\tfrac{d}{dt}a_X(y,y)=a_X(y,\dot y)$,  gives $(p,\theta)\in H^1(0,T;H)\cap L^\infty(0,T;X)$ satisfying the higher estimate \eqref{eq:higher-reduced-estimate}. Since $\boldsymbol{f}\in L^2(0,T;\boldsymbol{L}^2(\Omega))$ gives $\|\boldsymbol{u}_{\boldsymbol{f}}\|_{L^2(0,T;\boldsymbol{V})}\le C\|\boldsymbol{f}\|_{L^2(0,T;L^2(\Omega))}$ directly from \eqref{eq:uf-def} and coercivity of $a_{\boldsymbol{u}}$, both \eqref{eq:basic-reduced-estimate} and \eqref{eq:higher-reduced-estimate} have their right-hand sides bounded by data in the norms already used in \eqref{eq:state-rate-estimate}--\eqref{eq:u-value-estimate} below.

Finally, for reconstruction of $\boldsymbol{u}$, define $\dot{\boldsymbol{u}}$ by \eqref{eq:u-reconstruction} and set $\boldsymbol{u}(t):=\boldsymbol{u}(0)+\int_0^t\dot{\boldsymbol{u}}(s)\,ds$ for given $\boldsymbol{u}(0)\in\boldsymbol{V}$. By \eqref{eq:elastic-lift-bound} and \eqref{eq:higher-reduced-estimate},
\begin{align*}
	\|\dot{\boldsymbol{u}}\|_{L^2(0,T;\boldsymbol{V})} &\le \|\boldsymbol{u}_{\boldsymbol{f}}\|_{L^2(0,T;\boldsymbol{V})} + C\|(\dot p,\dot\theta)\|_{L^2(0,T;H)} < \infty,
    \\
    &\le C\bigl(\|(p(0),\theta(0))\|_{X} + \|m_p\|_{L^2(0,T;L^2)} + \|m_\theta\|_{L^2(0,T;L^2)} + \|\boldsymbol{f}\|_{L^2(0,T;\boldsymbol{L}^2)}\bigr),
\end{align*}
so $\boldsymbol{u}\in H^1(0,T;\boldsymbol{V})$. By construction, the triple $(\boldsymbol{u},p,\theta)$ satisfies \eqref{eq:new-weak-n-eq1}--\eqref{eq:new-weak-n-eq3}.
\end{proof}


\begin{lemma}\label{lem:state-energy}
	Suppose that $(\boldsymbol{u}, p, \theta) \in H^1(0,T;\boldsymbol{V}) \times (H^1(0,T;L^2(\Omega))\cap L^2(0,T;Q)) \times (H^1(0,T;L^2(\Omega))\cap L^2(0,T;\Psi))$ satisfy \eqref{eq:new-weak-n-eqs} for a.e.\ $0< t <T$, under the assumptions of Lemma~\ref{lem:effective-storage}. Then, 
\algn{
	\notag
	&\| \dot{\boldsymbol{u}} \|_{L^2(0,t; \boldsymbol{V})} + \|\dot{p}\|_{L^2(0,t; L^2(\Omega))} + \|\dot{\theta}\|_{L^2(0,t;L^2(\Omega))} + \|p\|_{L^\infty(0,t; Q)} + \|\theta\|_{L^\infty(0,t; \Psi)}
	\\
	\label{eq:state-rate-estimate}
	&\quad \le C (\|p(0)\|_{Q} + \|\theta(0)\|_\Psi + \|\boldsymbol{f}\|_{L^2(0,t; L^2(\Omega))} + \|m_p\|_{L^2(0,t;L^2(\Omega))} + \|m_\theta\|_{L^2(0,t;L^2(\Omega))}), 
	\\
    \notag
	&\|{p}\|_{L^\infty(0,t;L^2(\Omega))} + \|\theta\|_{L^\infty(0,t;L^2(\Omega))}
    \\
	\label{eq:state-value-estimate}
	&\quad \le C \bigl(\|p(0)\|_{L^2(\Omega)} + \|\theta(0)\|_{L^2(\Omega)} + \|\boldsymbol{f}\|_{L^2(0,t; L^2(\Omega))} + \|m_p\|_{L^2(0,t;L^2(\Omega))} + \|m_\theta\|_{L^2(0,t;L^2(\Omega))}\bigr),
	\\
    \notag
	&\|{\boldsymbol{u}}\|_{C^0(0,t;\boldsymbol{V})}
    \\
	\label{eq:u-value-estimate}
	&\quad \le \|\boldsymbol{u}(0)\|_{\boldsymbol{V}} + C\sqrt{t}\, (\|p(0)\|_{Q} + \|\theta(0)\|_\Psi + \|\boldsymbol{f}\|_{L^2(0,t; L^2(\Omega))} + \|m_p\|_{L^2(0,t;L^2(\Omega))} + \|m_\theta\|_{L^2(0,t;L^2(\Omega))}) .	
}
\end{lemma}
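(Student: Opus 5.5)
The plan is to derive all three estimates from the reduced parabolic problem \eqref{eq:reduced-parabolic} of Lemma~\ref{lem:effective-storage}, using the coercivity \eqref{eq:meff-coercive} of $m_{\rm eff}$ and the reconstruction \eqref{eq:u-reconstruction}. The assumed regularity, $(p,\theta)\in H^1(0,T;H)\cap L^2(0,T;X)$, justifies taking $z=\dot y$ and $z=y$ as test functions. The right-hand side $F(t)$ of \eqref{eq:reduced-parabolic} is represented by an $H$-valued function, namely
\[
-\bigl(m_p+\alpha_p\operatorname{div}\boldsymbol{u}_{\boldsymbol{f}},\ m_\theta+\alpha_\theta\operatorname{div}\boldsymbol{u}_{\boldsymbol{f}}\bigr).
\]
By \eqref{eq:uf-def} and coercivity of $a_{\boldsymbol{u}}$ we have $\|\boldsymbol{u}_{\boldsymbol{f}}(s)\|_{\boldsymbol{V}}\le C\|\boldsymbol{f}(s)\|_{L^2}$. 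Hence
\[
\|F\|_{L^2(0,t;H)}\le C\bigl(\|m_p\|_{L^2(0,t;L^2)}+\|m_\theta\|_{L^2(0,t;L^2)}+\|\boldsymbol{f}\|_{L^2(0,t;L^2)}\bigr).
\]

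For \eqref{eq:state-rate-estimate}, test \eqref{eq:reduced-parabolic} with $z=\dot y$. Since $a_p,a_\theta$ are symmetric, $a_p(p,\dot p)+a_\theta(\theta,\dot\theta)=\tfrac12\tfrac{d}{dt}\bigl(\|p\|_{a_p}^2+\|\theta\|_{a_\theta}^2\bigr)$. Combining this with \eqref{eq:meff-coercive}, integrating over $(0,s)$, and applying Young's inequality to $(F,\dot y)$ gives
\[
C_{\rm eff}\|\dot y\|_{L^2(0,s;H)}^2+\|p(s)\|_{a_p}^2+\|\theta(s)\|_{a_\theta}^2\le \|p(0)\|_{a_p}^2+\|\theta(0)\|_{a_\theta}^2+C\|F\|_{L^2(0,s;H)}^2 .
\]
The seminorms $\|\cdot\|_{a_p}$ and $\|\cdot\|_{a_\theta}$ are equivalent to the $Q$- and $\Psi$-norms by Poincar\'e's inequality, since $|\Gamma_p|,|\Gamma_\theta|>0$ and $\kappa_p,\kappa_\theta$ are SPD. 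Taking the supremum over $s\le t$ controls $\|\dot p\|$, $\|\dot\theta\|$ in $L^2(0,t;L^2)$ and $\|p\|_{L^\infty(0,t;Q)}$, $\|\theta\|_{L^\infty(0,t;\Psi)}$. For $\dot{\boldsymbol{u}}$, \eqref{eq:u-reconstruction} together with \eqref{eq:elastic-lift-bound} gives $\|\dot{\boldsymbol{u}}\|_{\boldsymbol{V}}\le C(\|\boldsymbol{f}\|_{L^2}+\|\dot p\|_{L^2}+\|\dot\theta\|_{L^2})$ pointwise a.e., and integrating finishes \eqref{eq:state-rate-estimate}.

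For \eqref{eq:state-value-estimate}, test with $z=y$ instead. Symmetry of $m_{\rm eff}$ gives $m_{\rm eff}(\dot y,y)=\tfrac12\tfrac{d}{dt}m_{\rm eff}(y,y)$, and $m_{\rm eff}(y,y)$ is equivalent to $\|y\|_H^2$ by boundedness and \eqref{eq:meff-coercive}. Bound $(F,y)\le \tfrac12\|F\|_H^2+\tfrac12\|y\|_H^2$, drop the nonnegative diffusion terms, and apply Gronwall's inequality on $[0,T]$; this yields the $L^\infty(0,t;L^2)$ bound with $T$-dependent constants. Alternatively, bounding $(F,y)$ by $\|F\|_{X'}\|y\|_X$ and absorbing into the diffusion avoids Gronwall. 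Finally, \eqref{eq:u-value-estimate} follows from $\boldsymbol{u}(s)=\boldsymbol{u}(0)+\int_0^s\dot{\boldsymbol{u}}$ and Cauchy--Schwarz in time, $\|\boldsymbol{u}(s)\|_{\boldsymbol{V}}\le\|\boldsymbol{u}(0)\|_{\boldsymbol{V}}+\sqrt{s}\,\|\dot{\boldsymbol{u}}\|_{L^2(0,s;\boldsymbol{V})}$, combined with \eqref{eq:state-rate-estimate}.

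The only delicate step is justifying $z=\dot y$ together with the chain rule $\tfrac{d}{dt}\|p\|_{a_p}^2=2a_p(p,\dot p)$. The hypotheses give only $p\in H^1(0,T;L^2)\cap L^2(0,T;Q)$, not $\dot p\in L^2(0,T;Q)$. I would handle this by invoking the regularity statement of Theorem~\ref{thm:well-posedness}, which gives $p(0)\in Q$ and the existence of the $L^\infty(0,T;X)$ solution, together with uniqueness. Alternatively, one can argue by Galerkin or time-mollification approximation and pass to the limit, using that the estimate is stable under such limits. Everything else is routine.
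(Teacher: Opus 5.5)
Your proposal is correct and follows essentially the same route as the paper. The rate estimate comes from testing the reduced problem with $z=\dot y$ plus the reconstruction \eqref{eq:u-reconstruction}; the paper cites \eqref{eq:higher-reduced-estimate} from Theorem~\ref{thm:well-posedness}, which is proved by exactly that test. The $L^\infty(0,t;L^2)$ bound comes from testing with $z=y$ and absorbing the right-hand side into the diffusion term, which is your ``alternative'' (the Gronwall variant also works but gives a $T$-dependent constant). The displacement bound is the same fundamental-theorem-of-calculus and Cauchy--Schwarz step. Your remark on justifying $z=\dot y$ is more careful than the paper, with one correction: Theorem~\ref{thm:well-posedness} does not give $p(0)\in Q$, it assumes it. This is harmless, since if $p(0)\notin Q$ the right-hand side of \eqref{eq:state-rate-estimate} is infinite.
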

\begin{proof}
By Theorem~\ref{thm:well-posedness} applied on $(0,t)$, \eqref{eq:higher-reduced-estimate} gives
\begin{multline*}    
	\|\dot p\|_{L^2(0,t;L^2(\Omega))} + \|\dot\theta\|_{L^2(0,t;L^2(\Omega))} + \|p\|_{L^\infty(0,t;Q)} + \|\theta\|_{L^\infty(0,t;\Psi)}
    \\
	\le C\bigl(\|p(0)\|_Q+\|\theta(0)\|_\Psi + \|m_p\|_{L^2(0,t;L^2)} + \|m_\theta\|_{L^2(0,t;L^2)} + \|\boldsymbol{f}\|_{L^2(0,t;L^2)}\bigr),
\end{multline*}
using $\|\boldsymbol{u}_{\boldsymbol{f}}\|_{L^2(0,t;\boldsymbol{V})}\le C\|\boldsymbol{f}\|_{L^2(0,t;L^2(\Omega))}$. Combined with 
\algns{
\|\dot{\boldsymbol{u}}\|_{L^2(0,t;\boldsymbol{V})} \le \|\boldsymbol{u}_{\boldsymbol{f}}\|_{L^2(0,t;\boldsymbol{V})} + C(\|\dot p\|_{L^2(0,t;L^2)}+\|\dot\theta\|_{L^2(0,t;L^2)})
}
from \eqref{eq:u-reconstruction} and \eqref{eq:elastic-lift-bound}, this gives \eqref{eq:state-rate-estimate}.

The estimate for $p,\theta$ in \eqref{eq:state-value-estimate} follows directly from the reduced abstract parabolic equation with the effective mass form $m_{\rm eff}$, 
testing \eqref{eq:reduced-parabolic} with $z=y=(p,\theta)$ itself gives $\tfrac12\tfrac{d}{dt}m_{\rm eff}(y,y) + a_X(y,y) = \ell(t;y)$, and Young's inequality on the right-hand side ($\ell(t;y)\le\tfrac12 a_X(y,y) + C\|\ell(t)\|_{X'}^2$) absorbs the $a_X(y,y)$ term without any time-weighting; integrating in $t$ and using coercivity of $m_{\rm eff}$ (Lemma~\ref{lem:effective-storage}, part (i)) gives exactly \eqref{eq:basic-reduced-estimate} on $(0,t)$, i.e.
\begin{multline*}
	\|p\|_{L^\infty(0,t;L^2)} + \|\theta\|_{L^\infty(0,t;L^2)} 
    \\
    \le C\bigl(\|p(0)\|_{L^2}+\|\theta(0)\|_{L^2} + \|m_p\|_{L^2(0,t;L^2)} + \|m_\theta\|_{L^2(0,t;L^2)} + \|\boldsymbol{u}_{\boldsymbol{f}}\|_{L^2(0,t;\boldsymbol{V})}\bigr),
\end{multline*}
which is \eqref{eq:state-value-estimate} after using $\|\boldsymbol{u}_{\boldsymbol{f}}\|_{L^2(0,t;\boldsymbol{V})}\le C\|\boldsymbol{f}\|_{L^2(0,t;L^2(\Omega))}$ again. The constant is independent of $t$ because the argument bounds $\|y(t)\|_H^2$ directly from $\|y(0)\|_H^2$ and $\|\ell\|_{L^2(0,t;X')}^2$, with no integration of $\dot y$ in time.

For $\boldsymbol{u}$ in \eqref{eq:u-value-estimate}, the fundamental theorem of calculus and the Cauchy--Schwarz inequality give
\begin{align*}
    \|\boldsymbol{u}(t)\|_{\boldsymbol{V}} 
    \le \|\boldsymbol{u}(0)\|_{\boldsymbol{V}}
    + \sqrt{t}\,\|\dot{\boldsymbol{u}}\|_{L^2(0,t;\boldsymbol{V})},
\end{align*}
and \eqref{eq:u-value-estimate} follows by combining with \eqref{eq:state-rate-estimate}. 
\end{proof}

For later use, we state the following elliptic-regularity-type assumption for the solutions of the differentiated thermo-poroelastic system; it is used in Theorem~\ref{thm:well-posedness} to describe additional regularity of the weak solution. The a priori error estimates below impose the specific (typically stronger) regularity assumptions needed in each case independently. There exists $0<s\le 1$ such that, whenever the data are sufficiently regular, the solution satisfies
\begin{align}
    \label{eq:elliptic-regularity}
    &\|\boldsymbol{u}\|_{L^2(0,T;\boldsymbol{H}^{1+s}(\Omega))}
    + \|p\|_{L^2(0,T;H^{1+s}(\Omega))}
    + \|\theta\|_{L^2(0,T;H^{1+s}(\Omega))}
    \\
    \notag
    &\quad \le C_{\rm reg}
    \left(
    \|\boldsymbol{u}(0)\|_{\boldsymbol{H}^{1+s}(\Omega)}
    + \|p(0)\|_{H^1(\Omega)}
    + \|\theta(0)\|_{H^1(\Omega)} \right)
    \\
    \notag
    &\qquad + C_{\rm reg} \left( \|\boldsymbol{f}\|_{L^2(0,T;\boldsymbol{L}^2(\Omega))}
    + \|m_p\|_{L^2(0,T;L^2(\Omega))}
    + \|m_\theta\|_{L^2(0,T;L^2(\Omega))}
    \right).
\end{align}
%
\subsection{Existence of optimal control}\label{Existence of optimal control}
Our main goal is to choose distributed controls $(m_p,m_\theta) \in \mc{M}_{ad}$ so that the corresponding solid displacement $\boldsymbol{u}$, fluid pressure $p$, and temperature $\theta$ are the best possible approximations to desired target states $\boldsymbol{u}_C$, $p_C$, and $\theta_C$, respectively.
\begin{definition} 
	For given $(\boldsymbol{u}_0, p_0, \theta_0) \in \boldsymbol{V} \times Q \times \Psi$, we define that 
	$(\boldsymbol{u}, p, \theta, m_p, m_\theta) \in H^1(0,T;\boldsymbol{V}) \times (H^1(0,T;L^2(\Omega))\cap L^2(0,T;Q)) \times (H^1(0,T;L^2(\Omega))\cap L^2(0,T;\Psi)) \times L^2(0,T;L^2(\Omega))^2$ is admissible for initial data $(\boldsymbol{u}_0, p_0, \theta_0)$ if it satisfies the weak formulation \eqref{eq:new-weak-n-eqs} with controls $(m_p,m_\theta)$.
\end{definition}
Let $\alpha_{\boldsymbol{u}, C}$, $\alpha_{p, C}$, $\alpha_{\theta,C}$ be non-negative numbers such that $\alpha_{\boldsymbol{u}, C} +  \alpha_{p, C} + \alpha_{\theta,C} > 0$, and let $\gamma_p, \gamma_\theta > 0$ be control cost parameters. For given target fields
\begin{align*}
    \boldsymbol{u}_C \in L^2(0,T; \boldsymbol{L}^2(\Omega)), \quad p_C \in L^2(0,T; L^2(\Omega)), \quad \theta_C \in L^2(0,T; L^2(\Omega)),
\end{align*}
and an admissible quintuple $(\boldsymbol{u}, p, \theta, m_p, m_\theta)$, we define the objective functional as
\begin{align}
	\label{obj_fun} J \LRp{\boldsymbol{u}, p, \theta, m_p, m_\theta} &= 
	  \frac{\alpha_{\boldsymbol{u}, C}}{2} \int_0^T \int_{\Omega} |\boldsymbol{u} - \boldsymbol{u}_C|^2 \, dx \, dt 
     + \frac{\alpha_{p, C}}{2} \int_0^T \int_{\Omega} |p - p_C|^2 \, dx \, dt 
     \\ \notag
	 &\quad + \frac{\alpha_{\theta,C}}{2} \int_0^T \int_\Omega |\theta - \theta_C|^2\,dx\,dt
     \\
	 &\quad + \frac{\gamma_p}{2} \int_0^T \int_{\Omega} |m_p|^2 \, dx \, dt + \frac{\gamma_\theta}{2} \int_0^T \int_\Omega |m_\theta|^2\,dx\,dt,
\end{align}
where $\mathcal{M}_{ad} = \mathcal{M}_{ad}^p \times \mathcal{M}_{ad}^\theta$ is the closed convex admissible set defined in \eqref{admcontrol-p}--\eqref{admcontrol-theta}. The regularization terms bound the controls and guarantee existence of an optimal pair. 
\begin{definition}
	$(\bar{m}_p, \bar{m}_\theta) \in \mc{M}_{ad}$ is called an optimal control pair if the admissible quintuple $(\bar{\boldsymbol{u}}, \bar{p}, \bar{\theta}, \bar{m}_p, \bar{m}_\theta)$ for given initial data $(\boldsymbol{u}_0, p_0, \theta_0)$ satisfies
	\algn{
		\nonumber J \LRp{\boldsymbol{u}, p, \theta, m_p, m_\theta} \ge J \LRp{\bar{\boldsymbol{u}}, \bar{p}, \bar{\theta}, \bar{m}_p, \bar{m}_\theta}
	}
	for all $(\boldsymbol{u}, p, \theta, m_p, m_\theta)$ admissible with $(m_p,m_\theta) \in \mc{M}_{ad}$.
\end{definition}
\begin{definition}
	Let $\mathcal{S}:L^2(0,T;L^2(\Omega))^2 \rightarrow L^2(0,T;\boldsymbol{V} \times Q \times \Psi)$ denote the linear control-to-state map that sends $(m_p,m_\theta)$ to $(\boldsymbol{u},p,\theta)$, the weak solution of \eqref{eq:new-strong-eqs} with zero initial data, zero body force, and controls $(m_p,m_\theta)$. We restrict $\mathcal{S}$ to $\mathcal{M}_{ad}$ when considering admissible controls.
\end{definition}
The energy estimate \eqref{eq:state-rate-estimate} shows that $\mathcal{S}$ is linear and bounded. Write the state as $(\boldsymbol{u},p,\theta) = \boldsymbol{y}^0 + \mathcal{S}(m_p,m_\theta)$, where $\boldsymbol{y}^0 = (\boldsymbol{u}^0,p^0,\theta^0)$ solves \eqref{eq:new-strong-eqs} with $(m_p,m_\theta)=(0,0)$ and the given initial data and body force, while $\mathcal{S}(m_p,m_\theta)$ solves the same system with zero initial data, zero body force, and controls $(m_p,m_\theta)$. The reduced cost functional is then
\begin{align}
    \notag
	j(m_p,m_\theta) &= \frac{\gamma_p}{2}\|m_p\|_{L^2(0,T;L^2(\Omega))}^2 + \frac{\gamma_\theta}{2}\|m_\theta\|_{L^2(0,T;L^2(\Omega))}^2
    \\
    \label{eq:reduced-functional}
	&+ \frac{\alpha_{\boldsymbol{u},C}}{2}\|\boldsymbol{u}^0+\mathcal{S}_{\boldsymbol{u}}(m_p,m_\theta)-\boldsymbol{u}_C\|_{L^2(0,T;L^2(\Omega))}^2
    \\
    \notag
    &+ \frac{\alpha_{p,C}}{2}\|p^0+\mathcal{S}_p(m_p,m_\theta)-p_C\|_{L^2(0,T;L^2(\Omega))}^2
    \\
    \notag
    &+ \frac{\alpha_{\theta,C}}{2}\|\theta^0+\mathcal{S}_\theta(m_p,m_\theta)-\theta_C\|_{L^2(0,T;L^2(\Omega))}^2,
\end{align}
where $\mathcal{S}_{\boldsymbol{u}},\mathcal{S}_p,\mathcal{S}_\theta$ denote the state components of $\mathcal{S}$. Since $\mathcal{S}$ is linear and bounded, $j:\mathcal{M}_{ad}\to\mathbb{R}$ is Fréchet differentiable. 

\begin{theorem} \label{existence}
	For any given initial data $(\boldsymbol{u}_0, p_0, \theta_0)$ there exists a unique optimal control pair $(\bar{m}_p, \bar{m}_\theta) \in \mc{M}_{ad}$.
\end{theorem}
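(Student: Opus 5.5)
We will use the standard direct method for strictly convex, coercive, lower semicontinuous functionals on a closed convex set in a Hilbert space, applied to the reduced cost $j$ from \eqref{eq:reduced-functional} on $\mathcal{M}_{ad}\subset L^2(0,T;L^2(\Omega))^2$. First we record that $\mathcal{M}_{ad}$ is nonempty (the constant pair $(m_{p,a},m_{\theta,a})$ lies in it), convex, and closed in $L^2(0,T;L^2(\Omega))^2$. Closedness follows because $L^2$-convergence gives an a.e.\ convergent subsequence, and the pointwise box constraints pass to the limit. Convexity is immediate from the box structure. Since the constraints are bounded, $\mathcal{M}_{ad}$ is also bounded. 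Hence it is weakly sequentially compact, and in any case $j\ge \frac{\gamma_p}{2}\|m_p\|^2+\frac{\gamma_\theta}{2}\|m_\theta\|^2$ gives coercivity.

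Next, existence. Let $\inf_{\mathcal{M}_{ad}} j =: \bar j\ge 0$ and take a minimizing sequence $(m_p^k,m_\theta^k)\subset\mathcal{M}_{ad}$. By boundedness we extract a subsequence converging weakly in $L^2(0,T;L^2(\Omega))^2$ to some $(\bar m_p,\bar m_\theta)$, which lies in $\mathcal{M}_{ad}$ because closed convex sets are weakly closed (Mazur). The control-to-state map $\mathcal{S}$ is linear and bounded by \eqref{eq:state-rate-estimate}--\eqref{eq:u-value-estimate} (the $\boldsymbol{u}$-component is controlled in $C^0(0,T;\boldsymbol{V})\subset L^2(0,T;\boldsymbol{L}^2)$ and $p,\theta$ in $L^\infty(0,T;L^2)$). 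Therefore it is weak--weak continuous into $L^2(0,T;\boldsymbol{L}^2(\Omega)\times L^2(\Omega)\times L^2(\Omega))$, so $\mathcal{S}(m_p^k,m_\theta^k)\rightharpoonup\mathcal{S}(\bar m_p,\bar m_\theta)$. Each term of $j$ is a squared Hilbert norm of an affine, weakly continuous expression, hence convex and continuous, so weakly lower semicontinuous. This gives $j(\bar m_p,\bar m_\theta)\le\liminf j(m_p^k,m_\theta^k)=\bar j$, and $(\bar m_p,\bar m_\theta)$ is a minimizer. The corresponding quintuple with $(\bar{\boldsymbol{u}},\bar p,\bar\theta)=\boldsymbol{y}^0+\mathcal{S}(\bar m_p,\bar m_\theta)$ is admissible by Theorem~\ref{thm:well-posedness}, with well-definedness from Lemma~\ref{lem:effective-storage} and Theorem~\ref{thm:well-posedness}.

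Finally, uniqueness. The tracking terms are convex, being compositions of convex quadratics with affine maps. The regularization $\frac{\gamma_p}{2}\|m_p\|^2+\frac{\gamma_\theta}{2}\|m_\theta\|^2$ with $\gamma_p,\gamma_\theta>0$ is strictly convex. Hence $j$ is strictly convex on the convex set $\mathcal{M}_{ad}$. If two distinct minimizers existed, their midpoint would lie in $\mathcal{M}_{ad}$ and give a strictly smaller value, a contradiction.

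The only step requiring care is the weak lower semicontinuity, specifically that $\mathcal{S}$ is bounded into the spaces appearing in $J$ with zero initial data and zero body force. Because the admissible states use $(\boldsymbol{u}_0,p_0,\theta_0)\in\boldsymbol{V}\times Q\times\Psi$, the higher estimate \eqref{eq:higher-reduced-estimate} applies, and splitting the state as $\boldsymbol{y}^0+\mathcal{S}(m_p,m_\theta)$ places the fixed offset $\boldsymbol{y}^0$ in $L^2$, which is all that is needed. I expect no genuine obstacle beyond verifying this boundedness.
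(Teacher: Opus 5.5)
Your proposal is correct and follows essentially the same route as the paper. Both arguments apply the direct method to the reduced functional $j$. Boundedness and weak closedness of $\mathcal{M}_{ad}$ give a weakly convergent minimizing subsequence. Weak--weak continuity of the bounded linear map $\mathcal{S}$, together with weak lower semicontinuity of the norms, shows the limit is a minimizer. Strict convexity from the $\gamma_p,\gamma_\theta$ terms gives uniqueness. Your extra checks (nonemptiness, closedness via a.e.\ convergent subsequences, and the explicit midpoint argument) only fill in details the paper leaves implicit.
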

\begin{proof}
	We minimize the reduced functional $j(m_p,m_\theta)$ defined in \eqref{eq:reduced-functional} over $\mathcal{M}_{ad}$. Since $j\ge 0$, there exists a minimizing sequence $\{(m_{p,n},m_{\theta,n})\}_{n=1}^\infty \subset \mathcal{M}_{ad}$ with
	\algn{
		\nonumber \lim_{n\to\infty} j(m_{p,n},m_{\theta,n}) = \inf_{\mathcal{M}_{ad}} j =: j_{\inf} \ge 0.
	}
	%
	Since $\{j(m_{p,n},m_{\theta,n})\}$ is bounded, $\{m_{p,n}\}$ and $\{m_{\theta,n}\}$ are bounded in $L^2(0,T;L^2(\Omega))$, with $\|m_{p,n}\|_{L^2(0,T;L^2(\Omega))}^2 \le 2j(m_{p,n},m_{\theta,n})/\gamma_p$ and similarly for $m_{\theta,n}$. Since $\mathcal{M}_{ad}$ is weakly closed and bounded, by the Banach--Alaoglu theorem there exists a subsequence (still denoted $n$) and limits $(\bar{m}_p,\bar{m}_\theta) \in \mathcal{M}_{ad}$ such that
	\begin{align*}
		m_{p,n} \rightharpoonup \bar{m}_p \in L^2(0,T;L^2(\Omega)), \quad m_{\theta,n} \rightharpoonup \bar{m}_\theta \in L^2(0,T;L^2(\Omega)).
	\end{align*}
	Since $\mathcal{S}$ is linear and continuous, $m_{p,n} \rightharpoonup \bar{m}_p$ and $m_{\theta,n} \rightharpoonup \bar{m}_\theta$ imply $\mathcal{S}(m_{p,n},m_{\theta,n}) \rightharpoonup \mathcal{S}(\bar{m}_p,\bar{m}_\theta)$ in the Bochner space, so $(\bar{\boldsymbol{u}},\bar{p},\bar{\theta}) = \boldsymbol{y}^0 + \mathcal{S}(\bar{m}_p,\bar{m}_\theta)$ is the state corresponding to $(\bar{m}_p,\bar{m}_\theta)$. The tracking and regularization norms are weakly lower semi-continuous, hence
	\[
		j(\bar{m}_p,\bar{m}_\theta) \le \liminf_{n\to\infty} j(m_{p,n},m_{\theta,n}) = j_{\inf}.
	\]
	Uniqueness follows because the regularization terms 
    \[
    \frac{\gamma_p}{2}\|m_p\|_{L^2(0,T;L^2(\Omega))}^2 + \frac{\gamma_\theta}{2}\|m_\theta\|_{L^2(0,T;L^2(\Omega))}^2
    \]
    make $j$ strictly convex on $\mathcal{M}_{ad}$.
\end{proof}
\subsection{Necessary optimality conditions}
To derive the first order necessary optimality conditions, we use the adjoint operator of control-to-state operator $\mathcal{S}$. 
The corresponding variational equations for the adjoint problem are given by
\begin{subequations}\label{eq:new-weakadj}
	\begin{align}
		\label{eq:new-weak-adj1} 
		-a_{\boldsymbol{u}} (\dot{\boldsymbol{w}}, \boldsymbol{v}) - b_p(\dot{r},\boldsymbol{v}) - b_\theta(\dot{\phi},\boldsymbol{v}) &=  \alpha_{\boldsymbol{u}, C} ( \boldsymbol{u}-\boldsymbol{u}_C, \boldsymbol{v} )_{\Omega}, \\
		\label{eq:new-weak-adj2} 
		-b_p(q,\dot{\boldsymbol{w}}) + (s_{pp} \dot{r}, q)_{\Omega} + (s_{p\theta}\dot{\phi},q)_\Omega - a_p (r, q) &= \alpha_{p, C}( p-p_C, q)_{\Omega},\\
		\label{eq:new-weak-adj3}
		-b_\theta(\psi,\dot{\boldsymbol{w}}) + (s_{\theta p}\dot{r},\psi)_\Omega + (s_{\theta\theta}\dot{\phi},\psi)_\Omega - a_\theta(\phi,\psi) &= \alpha_{\theta,C}(\theta - \theta_C,\psi)_\Omega,
	\end{align}
\end{subequations}
for all $(\boldsymbol{v},q,\psi) \in \boldsymbol{V} \times Q \times \Psi$. Here $\boldsymbol{w}$, $r$, and $\phi$ are the adjoint displacement, adjoint fluid pressure, and adjoint temperature, respectively. Integration by parts in time leads to the terminal conditions $\boldsymbol{w}(\cdot,T) = \boldsymbol{0}$, $r(\cdot,T) = 0$, $\phi(\cdot,T)=0$. 
\begin{lemma}
	$(\bar{m}_p,\bar{m}_\theta)$ is an optimal control pair if and only if
	\begin{align}
		\label{ctsvi-g}
		\left(\gamma_p \bar{m}_p + r ,m_p-\bar{m}_p\right)_{(0,T) \times \Omega} &\ge 0 \qquad \forall m_p \in \mathcal{M}_{ad}^p,\\
		\label{ctsvi-m}
		\left(\gamma_\theta \bar{m}_\theta + \phi ,m_\theta-\bar{m}_\theta\right)_{(0,T) \times \Omega} &\ge 0 \qquad \forall m_\theta \in \mathcal{M}_{ad}^\theta,
	\end{align}
	where $r$ and $\phi$ are the adjoint fluid pressure and adjoint temperature components of \eqref{eq:new-weakadj}.
\end{lemma}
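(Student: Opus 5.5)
Since $j$ is convex and Fréchet differentiable on the closed convex set $\mathcal{M}_{ad}$ (Theorem~\ref{existence} and the discussion before it), $(\bar m_p,\bar m_\theta)$ is optimal if and only if
\[
j'(\bar m_p,\bar m_\theta)\bigl(m_p-\bar m_p,\,m_\theta-\bar m_\theta\bigr)\ge 0\qquad\forall (m_p,m_\theta)\in\mathcal{M}_{ad}.
\]
Necessity is the directional-derivative argument along the segment $\bar m+\tau(m-\bar m)$. Sufficiency uses convexity: $j(m)\ge j(\bar m)+j'(\bar m)(m-\bar m)$. Because $\mathcal{M}_{ad}=\mathcal{M}_{ad}^p\times\mathcal{M}_{ad}^\theta$ is a product, we may vary one component at a time, setting $m_\theta=\bar m_\theta$ or $m_p=\bar m_p$. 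The joint inequality is therefore equivalent to the two separate inequalities \eqref{ctsvi-g}--\eqref{ctsvi-m}, once $j'$ has been identified.

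To compute $j'$, let $\delta m=(\delta m_p,\delta m_\theta)$ and let $(\delta\boldsymbol{u},\delta p,\delta\theta)=\mathcal{S}\delta m$. By \eqref{eq:reduced-functional},
\[
j'(\bar m)\delta m=(\gamma_p\bar m_p,\delta m_p)+(\gamma_\theta\bar m_\theta,\delta m_\theta)+\alpha_{\boldsymbol{u},C}(\bar{\boldsymbol{u}}-\boldsymbol{u}_C,\delta\boldsymbol{u})+\alpha_{p,C}(\bar p-p_C,\delta p)+\alpha_{\theta,C}(\bar\theta-\theta_C,\delta\theta),
\]
where all inner products are over $(0,T)\times\Omega$. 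The key step is the duality identity
\[
\alpha_{\boldsymbol{u},C}(\bar{\boldsymbol{u}}-\boldsymbol{u}_C,\delta\boldsymbol{u})+\alpha_{p,C}(\bar p-p_C,\delta p)+\alpha_{\theta,C}(\bar\theta-\theta_C,\delta\theta)=(r,\delta m_p)+(\phi,\delta m_\theta).
\]
To obtain it, test \eqref{eq:new-weak-adj1}--\eqref{eq:new-weak-adj3} with $(\boldsymbol{v},q,\psi)=(\delta\boldsymbol{u},\delta p,\delta\theta)$, integrate over $(0,T)$, and integrate by parts in time. The boundary terms vanish because $(\delta\boldsymbol{u},\delta p,\delta\theta)(0)=0$ (zero initial data in $\mathcal{S}$) and $(\boldsymbol{w},r,\phi)(T)=0$. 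After this, the time derivatives have moved onto the state perturbation. The system obtained is then compared with \eqref{eq:new-weak-n-eqs} for the perturbation (with $\boldsymbol{f}=0$), tested with $(\boldsymbol{w},r,\phi)$. The symmetry of $a_{\boldsymbol{u}}$, of $\boldsymbol S$ ($s_{p\theta}=s_{\theta p}$), of $a_p$ and of $a_\theta$, together with the transposed placement of $b_p,b_\theta$ in the two systems, makes every bilinear term cancel. The only terms left are $(\delta m_p,r)+(\delta m_\theta,\phi)$, with signs fixed by the minus signs in \eqref{eq:new-weak-adj1}--\eqref{eq:new-weak-adj3}. I expect $-a_p(r,q)$ to be paired against $-a_p(p,q)$ in the state equation so that the signs come out exactly as claimed. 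Substituting the identity into $j'$ gives $j'(\bar m)\delta m=(\gamma_p\bar m_p+r,\delta m_p)+(\gamma_\theta\bar m_\theta+\phi,\delta m_\theta)$, which yields the lemma.

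The main obstacle is justifying the adjoint problem and the time integration by parts. First I need existence and regularity of $(\boldsymbol{w},r,\phi)$. Under the substitution $t\mapsto T-t$, the adjoint system has the same structure as \eqref{eq:new-weak-n-eqs}, with $\boldsymbol{f}$ replaced by $\alpha_{\boldsymbol{u},C}(\bar{\boldsymbol{u}}-\boldsymbol{u}_C)\in L^2(0,T;\boldsymbol{L}^2)$, sources in $L^2(0,T;L^2)$, and zero data at $T$. Lemma~\ref{lem:effective-storage}, Theorem~\ref{thm:well-posedness} and Lemma~\ref{lem:state-energy} therefore give $\boldsymbol{w}\in H^1(0,T;\boldsymbol{V})$ and $r,\phi\in H^1(0,T;L^2)\cap L^\infty(0,T;H^1)$. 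With this regularity, every pairing in the integration by parts is well defined. In particular, $a_{\boldsymbol{u}}(\dot{\boldsymbol{w}},\delta\boldsymbol{u})$ and $b_p(\dot r,\delta\boldsymbol{u})$ are products of $H^1$-in-time functions, so the product rule in time holds. The diffusion terms $a_p(r,\delta p)$ carry no time derivative, so they need no integration by parts.
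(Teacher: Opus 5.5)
Your proposal is correct and follows essentially the same route as the paper. You compute $j'$ by the chain rule, test the adjoint system with the linearized state $(\delta\boldsymbol{u},\delta p,\delta\theta)=\mathcal{S}(\delta m_p,\delta m_\theta)$, integrate by parts in time (the boundary terms vanish by the zero initial and terminal data), obtain the adjoint by time reversal, and read off the variational inequalities as first-order conditions. The sign check you hedged on does go through: after integration by parts, the three groups of terms are exactly the linearized state equations tested with $\boldsymbol{w}$, $r$, $\phi$, using $s_{p\theta}=s_{\theta p}$ and the symmetry of $a_{\boldsymbol{u}},a_p,a_\theta$. Your convexity argument for sufficiency and the splitting via $\mathcal{M}_{ad}=\mathcal{M}_{ad}^p\times\mathcal{M}_{ad}^\theta$ just make explicit what the paper calls standard.
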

\begin{proof}
    Let $(\delta m_p, \delta m_\theta) \in L^2(0,T;L^2(\Omega))^2$ be an admissible perturbation direction and let $(\delta\boldsymbol{u},\delta p,\delta\theta) = \mathcal{S}(\delta m_p,\delta m_\theta)$ be the corresponding linearized state. Computing the directional derivative of $j$ via the chain rule and using the adjoint system \eqref{eq:new-weakadj} to transfer the state sensitivity to the control space—by testing \eqref{eq:new-weak-adj1}--\eqref{eq:new-weak-adj3} against $(\delta\boldsymbol{u},\delta p,\delta\theta)$ and integrating by parts in time—one obtains
    \[
        j'(m_p,m_\theta)(\delta m_p, \delta m_\theta)
        = (\gamma_p m_p + r, \delta m_p)_{(0,T)\times\Omega}
        + (\gamma_\theta m_\theta + \phi, \delta m_\theta)_{(0,T)\times\Omega},
    \]
    where the boundary terms in time vanish because the linearized state variables $(\delta\boldsymbol{u},\delta p,\delta\theta)$ have zero initial data and the adjoint variables $(\boldsymbol{w},r,\phi)$ have zero terminal data. Here $(r,\phi)$ are the adjoint pressure and temperature components of the solution to \eqref{eq:new-weakadj}. The adjoint system \eqref{eq:new-weakadj} is a backward-in-time thermo-poroelastic system with terminal data zero and right-hand sides given by the state tracking residuals. Setting $s = T-t$ converts it to a forward-in-time problem of the same type as \eqref{eq:new-strong-eqs}; after this time reversal, the adjoint system \eqref{eq:new-weakadj} has the same structure as the differentiated state system \eqref{eq:new-strong-eqs} with $(\boldsymbol{w},r,\phi)$ playing the role of $(\boldsymbol{u},p,\theta)$ and the tracking residuals $\alpha_{\boldsymbol{u},C}(\boldsymbol{u}-\boldsymbol{u}_C)$, $\alpha_{p,C}(p-p_C)$, $\alpha_{\theta,C}(\theta-\theta_C)$ playing the role of the source terms $\boldsymbol{f}$, $m_p$, $m_\theta$. Existence of the adjoint therefore follows by applying the effective-storage reduction of Lemma~\ref{lem:effective-storage} and the existence argument of Theorem~\ref{thm:well-posedness} to the reversed system, and Lemma~\ref{lem:state-energy} applies to the reversed system to give the energy estimate
    \[
    \begin{aligned}
        &\|\dot{\boldsymbol{w}}\|_{L^2(0,T;\boldsymbol{V})}
        + \|\dot r\|_{L^2(0,T;L^2(\Omega))}
        + \|\dot\phi\|_{L^2(0,T;L^2(\Omega))}
        + \|r\|_{L^\infty(0,T;Q)}
        + \|\phi\|_{L^\infty(0,T;\Psi)}
        \\
        &\le C\bigl(\|\boldsymbol{u}-\boldsymbol{u}_C\|_{L^2(0,T;L^2(\Omega))} + \|p-p_C\|_{L^2(0,T;L^2(\Omega))} + \|\theta-\theta_C\|_{L^2(0,T;L^2(\Omega))}\bigr).
    \end{aligned}
    \] The variational inequalities \eqref{ctsvi-g}--\eqref{ctsvi-m} are then the standard first-order necessary conditions $j'(\bar{m}_p,\bar{m}_\theta)(m_p-\bar{m}_p,m_\theta-\bar{m}_\theta) \ge 0$ for all $(m_p,m_\theta)\in\mathcal{M}_{ad}$.
\end{proof}
	Using the component projections $\mathcal{P}_{\mathcal{M}_{ad}^p}:L^{2}(0,T;L^{2}(\Omega)) \rightarrow \mathcal{M}_{ad}^p$ and $\mathcal{P}_{\mathcal{M}_{ad}^\theta}:L^{2}(0,T;L^{2}(\Omega)) \rightarrow \mathcal{M}_{ad}^\theta$ defined as:
	\begin{align}\label{mmproj}
		\mathcal{P}_{\mathcal{M}_{ad}^p}(\varphi)(t,x)&:= \max\{m_{p,a},\min\{\varphi(t,x), m_{p,b}\}\},\\
		\mathcal{P}_{\mathcal{M}_{ad}^\theta}(\varphi)(t,x)&:= \max\{m_{\theta,a},\min\{\varphi(t,x), m_{\theta,b}\}\},
	\end{align}
the continuous variational inequalities \eqref{ctsvi-g}--\eqref{ctsvi-m} can be expressed as the projection formulas:
		\begin{align}\label{projcts}
			\bar{m}_p = \mathcal{P}_{\mathcal{M}_{ad}^p} \left(-\frac{1}{\gamma_p} r(\bar{m}_p,\bar{m}_\theta)\right), \qquad
			\bar{m}_\theta = \mathcal{P}_{\mathcal{M}_{ad}^\theta} \left(-\frac{1}{\gamma_\theta} \phi(\bar{m}_p,\bar{m}_\theta)\right).
	\end{align}
%

%
\section{Discrete formulation}\label{discrete formulation}
In this section we discuss discretizations of the optimal control problem with finite elements. 

\subsection{Spatial and space-time Discretizations}
To construct the finite element approximation, we consider a family of shape-regular partition $\{\mathcal{T}_h\}$ of $\overline{\Omega}$ into triangles $K$ with diameter $h_K$. 

Consider finite element spaces $\boldsymbol{V}_h \times Q_h \times \Psi_h \subset \boldsymbol{V}\times Q \times \Psi$ where $\boldsymbol{V}_h$, $Q_h$, $\Psi_h$ consist of continuous piecewise polynomial functions conforming to the corresponding boundary conditions on $\Gamma_d$, $\Gamma_p$, $\Gamma_{\theta}$. 
We assume the following inf-sup condition:
there exists $C>0$ independent of $h$ such that for every $\chi_h\in\alpha_p Q_h+\alpha_\theta\Psi_h$,
\algn{
	\label{eq:disc-infsup-combination}
	\|\chi_h\|_{L^2(\Omega)} \le C \sup_{\boldsymbol{v}_h\in\boldsymbol{V}_h} \frac{|(\chi_h,\dive\boldsymbol{v}_h)_\Omega|}{\|\boldsymbol{v}_h\|_{\boldsymbol{V}}}.
}
%
Standard Taylor--Hood-type or MINI-type pairs $(\boldsymbol{V}_h,\tilde{Q}_h)$ satisfy this inf-sup condition. If $\alpha_p$, $\alpha_{\theta}$ are constants on $\Omega$ and $Q_h, \Psi_h \subset \tilde{Q}_h$, then $\alpha_p Q_h + \alpha_{\theta} \Psi_h \subset \tilde{Q}_h$, so \eqref{eq:disc-infsup-combination} is satisfied.

For time discretization let $\Delta t$ represent the time step size, where $T=N\Delta t$ and $N$ is the positive integer. We introduce the notation $t_k=k\Delta t$ and $I_k := (t_{k}, t_{k+1}]$ for $k=0,\cdots,N-1$. For a continuous function $\varphi$ defined on $[0,T]$, let $\varphi^k=\varphi(t_k)$. For a given sequence $\{\varphi^k\}_{k\ge 0}$, the derivative is then approximated as follows:
\[\dt\varphi^{k+1}:=\frac{\varphi^{k+1}-\varphi^k}{\Delta t}.\]
Now, we set
\begin{align*}
	\boldsymbol{V}_{hk} &:= \{\boldsymbol{v} \in L^2(0,T; \boldsymbol{V}_h): \boldsymbol{v}|_{I_{k}} \in \mathbb{P}_{0}(I_k; \boldsymbol{V}_h) \ \text{for} \ k = 0,\cdots,N-1\},
    \\
	Q_{hk} &:= \{q \in L^2(0,T; Q_h) : q|_{I_{k}} \in \mathbb{P}_{0}(I_k; Q_h) \ \text{for} \ k = 0,\cdots,N-1\},
    \\
    \Psi_{hk} &:= \{\psi \in L^2(0,T; \Psi_h) : \psi|_{I_{k}} \in \mathbb{P}_{0}(I_k; \Psi_h) \ \text{for} \ k = 0,\cdots,N-1\},
\end{align*}
which means that elements of $\boldsymbol{V}_{hk}$, $Q_{hk}$, and $\Psi_{hk}$ are piecewise constant in time.
\subsection{Finite element approximation}\label{Finite element approximation:}
In the dG(0) method, on each interval $I_k = (t_k, t_{k+1}]$, the discrete solution is sought as a constant in time, i.e., $(U,P,\Theta)|_{I_k} = (\boldsymbol{u}_h^{k+1}, p_h^{k+1}, \theta_h^{k+1}) \in \boldsymbol{V}_h \times Q_h \times \Psi_h$. The scheme is obtained by integrating the equations over $I_k$ and testing against constant-in-time test functions $(\boldsymbol{v},q,\psi)\in\boldsymbol{V}_h\times Q_h\times\Psi_h$, so that
\[
  \frac{1}{\Delta t}\int_{I_k} \dot{\boldsymbol{u}}\,dt \approx \frac{\boldsymbol{u}_h^{k+1}-\boldsymbol{u}_h^k}{\Delta t} =: \dt \boldsymbol{u}_h^{k+1},
\]
and analogously for $p_h$ and $\theta_h$. Throughout this section the notation
\[
  \LRp{\varphi, v}_{I_k \times \Omega} := \int_{t_k}^{t_{k+1}}\int_{\Omega} \varphi\, v\,dx\,dt = \Delta t \int_\Omega \bar{\varphi}^{k+1} v\,dx,
  \qquad \bar{\varphi}^{k+1} := \frac{1}{\Delta t}\int_{t_k}^{t_{k+1}} \varphi\,dt,
\]
is used consistently for any source function $\varphi$.

\medskip
{\bf Discretization of the forward equation} The numerical solution at the $(k+1)$-th time step is defined inductively as follows: For $0\le k \le N-1$ find $(\boldsymbol{u}_h^{k+1}, p_h^{k+1}, \theta_h^{k+1}) \in \boldsymbol{V}_h \times Q_h \times \Psi_h$ such that
\begin{subequations}\label{eq:fullydiscrete-eqs}
	\algn{
		\label{eq:fullydiscrete-eq1}
		a_{\boldsymbol{u}} \LRp{\dt \boldsymbol{u}_h^{k+1}, \boldsymbol{v}} + b_p \LRp{\dt p_h^{k+1}, \boldsymbol{v}} + b_\theta(\dt \theta_h^{k+1},\boldsymbol{v}) &= \frac{1}{\Delta t} \LRp{{\boldsymbol{f}}, \boldsymbol{v}}_{I_k\times\Omega}, \\
		\label{eq:fullydiscrete-eq2}
		b_p \LRp{q, \dt \boldsymbol{u}_h^{k+1}} - \LRp{s_{pp} \dt p_h^{k+1}, q}_{\Omega} - (s_{p\theta}\dt\theta_h^{k+1},q)_\Omega - a_p\LRp{ p_h^{k+1}, q} &= \frac{1}{\Delta t}\LRp{m_p, q}_{I_k\times\Omega}, \\
		\label{eq:fullydiscrete-eq3}
		b_\theta(\psi,\dt \boldsymbol{u}_h^{k+1}) - (s_{\theta p}\dt p_h^{k+1},\psi)_\Omega - (s_{\theta\theta}\dt\theta_h^{k+1},\psi)_\Omega - a_\theta(\theta_h^{k+1},\psi) &= \frac{1}{\Delta t}\LRp{m_\theta, \psi}_{I_k\times\Omega},
	}    
\end{subequations}
for all $(\boldsymbol{v}, q, \psi) \in \boldsymbol{V}_h \times Q_h \times \Psi_h$. 
\begin{lemma}
    The system \eqref{eq:fullydiscrete-eqs} has a unique solution.     
\end{lemma}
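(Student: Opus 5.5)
The plan is to argue one time step at a time. At step $k$ the previous values $(\boldsymbol{u}_h^k,p_h^k,\theta_h^k)$ are known, so \eqref{eq:fullydiscrete-eqs} is a square linear system in the finite-dimensional space $\boldsymbol{V}_h\times Q_h\times\Psi_h$ for the unknowns $(\boldsymbol{u}_h^{k+1},p_h^{k+1},\theta_h^{k+1})$. Existence is therefore equivalent to uniqueness, and it suffices to show that the homogeneous problem has only the trivial solution. Concretely, we set the data and previous iterates to zero, multiply by $\Delta t$, and write $\boldsymbol{u}:=\boldsymbol{u}_h^{k+1}$, $p:=p_h^{k+1}$, $\theta:=\theta_h^{k+1}$. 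The homogeneous system then reads
\begin{align*}
a_{\boldsymbol{u}}(\boldsymbol{u},\boldsymbol{v})+b_p(p,\boldsymbol{v})+b_\theta(\theta,\boldsymbol{v})&=0,\\
b_p(q,\boldsymbol{u})-((\boldsymbol{S}y)_1,q)_\Omega-\Delta t\,a_p(p,q)&=0,\\
b_\theta(\psi,\boldsymbol{u})-((\boldsymbol{S}y)_2,\psi)_\Omega-\Delta t\,a_\theta(\theta,\psi)&=0,
\end{align*}
where $y=(p,\theta)$.

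First we test with $\boldsymbol{v}=\boldsymbol{u}$, $q=p$, $\psi=\theta$. We subtract the sum of the second and third equations from the first. The coupling terms $b_p(p,\boldsymbol{u})+b_\theta(\theta,\boldsymbol{u})$ then cancel, and we obtain
\[
\|\boldsymbol{u}\|_{a_{\boldsymbol{u}}}^2+(\boldsymbol{S}y,y)_\Omega+\Delta t\bigl(\|p\|_{a_p}^2+\|\theta\|_{a_\theta}^2\bigr)=0 .
\]
Every term is nonnegative, because $\boldsymbol{S}\ge0$ and $a_{\boldsymbol{u}},a_p,a_\theta$ are coercive on $\boldsymbol{V},Q,\Psi$ (recall $|\Gamma_d|,|\Gamma_p|,|\Gamma_\theta|>0$). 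Hence $\boldsymbol{u}=0$, $p=0$ and $\theta=0$ directly, since the diffusion terms already force $p,\theta$ to vanish in $H^1$ through the Dirichlet parts.

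This route makes the argument essentially immediate. In particular, neither the effective storage condition \eqref{eq:S-kernel-condition} nor the discrete inf-sup condition \eqref{eq:disc-infsup-combination} is needed for uniqueness at fixed $\Delta t>0$. I would still remark that a robust alternative exists, one that mirrors Lemma~\ref{lem:effective-storage} and is what the stability analysis will need. In that alternative one eliminates $\boldsymbol{u}_h$ via a discrete elastic lift $\boldsymbol{u}_{h,y}$, obtaining a discrete effective mass $m_{\rm eff}^h(y,z)=(\boldsymbol{S}y,z)_\Omega+a_{\boldsymbol{u}}(\boldsymbol{u}_{h,y},\boldsymbol{u}_{h,z})$. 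This form is coercive on $Q_h\times\Psi_h$ uniformly in $h$, by \eqref{eq:disc-infsup-combination} combined with the pointwise bound \eqref{eq:L2-effective-coercivity}. Each step is then $m_{\rm eff}^h(y,z)+\Delta t\,a_X(y,z)=\text{rhs}$, which is solvable by the Lax--Milgram lemma.

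The only point needing care is the sign bookkeeping. The second and third equations carry minus signs on the storage and diffusion terms, so they must be subtracted, not added, to make the coupling cancel. This works because $s_{p\theta}=s_{\theta p}$, which gives $(s_{pp}p+s_{p\theta}\theta,p)+(s_{\theta p}p+s_{\theta\theta}\theta,\theta)=(\boldsymbol{S}y,y)_\Omega$. Apart from that I expect no obstacle. Induction over $k=0,\dots,N-1$, starting from the given initial data, then gives the unique fully discrete solution.
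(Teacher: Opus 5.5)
Your proof is correct and is essentially the paper's argument. The paper tests the homogeneous system with $\boldsymbol{v}=\boldsymbol{u}_h^{k+1}$, $q=-p_h^{k+1}$, $\psi=-\theta_h^{k+1}$ and adds, which is equivalent to your subtraction; it reaches the same identity $a_{\boldsymbol{u}}(\boldsymbol{u},\boldsymbol{u})+(\boldsymbol{S}y,y)_\Omega+\Delta t(\|p\|_{a_p}^2+\|\theta\|_{a_\theta}^2)=0$ and concludes, as you do, using only $\boldsymbol{S}\ge 0$ and coercivity, without the effective storage condition or the discrete inf-sup condition.
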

\begin{proof}
To establish the well-posedness of \eqref{eq:fullydiscrete-eqs}, it suffices to show that $\boldsymbol{u}_h^{k+1}=0$, $p_h^{k+1}=0$, $\theta_h^{k+1}=0$ when $\boldsymbol{u}_h^{k}, p_h^{k}, \theta_h^k, \boldsymbol{f}, m_p, m_\theta$ are zero. Setting those data to zero, \eqref{eq:fullydiscrete-eqs} becomes
\begin{subequations}
	\algns{
		a_{\boldsymbol{u}} \LRp{\boldsymbol{u}_h^{k+1}, \boldsymbol{v}} + b_p \LRp{p_h^{k+1}, \boldsymbol{v}} + b_\theta(\theta_h^{k+1},\boldsymbol{v}) = 0,
		\\
		b_p \LRp{q, \boldsymbol{u}_h^{k+1}} - \LRp{s_{pp} p_h^{k+1}, q}_{\Omega} - (s_{p\theta}\theta_h^{k+1},q)_\Omega - \Delta t a_p\LRp{p_h^{k+1}, q} = 0,
		\\
		b_\theta(\psi, \boldsymbol{u}_h^{k+1}) - (s_{\theta p}p_h^{k+1},\psi)_\Omega - (s_{\theta\theta}\theta_h^{k+1},\psi)_\Omega - \Delta t a_\theta(\theta_h^{k+1},\psi) = 0,
	}    
\end{subequations}
for all $(\boldsymbol{v}, q, \psi) \in \boldsymbol{V}_h \times Q_h \times \Psi_h$. Choosing $\boldsymbol{v}=\boldsymbol{u}_h^{k+1}$, $q=-p_h^{k+1}$, $\psi=-\theta_h^{k+1}$ and adding all three equations gives
\begin{align*}
	a_{\boldsymbol{u}}(\boldsymbol{u}_h^{k+1},\boldsymbol{u}_h^{k+1}) + (\boldsymbol{S}y_h^{k+1},y_h^{k+1})_\Omega + \Delta t\, a_p(p_h^{k+1},p_h^{k+1}) + \Delta t\, a_\theta(\theta_h^{k+1},\theta_h^{k+1}) = 0,
\end{align*}
where $y_h^{k+1}:=(p_h^{k+1},\theta_h^{k+1})$ and 
\[
(\boldsymbol{S}y_h^{k+1},y_h^{k+1})_\Omega = (s_{pp} p_h^{k+1}, p_h^{k+1})_\Omega + 2(s_{p\theta}\theta_h^{k+1},p_h^{k+1})_\Omega + (s_{\theta\theta}\theta_h^{k+1},\theta_h^{k+1})_\Omega .
\]
All four terms on the left are non-negative because $a_{\boldsymbol{u}}$ is coercive, $\boldsymbol{S}\ge0$, and $a_p,a_\theta$ are coercive. This forces $\boldsymbol{u}_h^{k+1}=p_h^{k+1}=\theta_h^{k+1}=0$. 
\end{proof}


{\bf Discrete optimal control problem} Consider the discrete optimal control problem: Find $(m_p,m_\theta) \in \mathcal{M}_{ad}$ such that 
	\begin{align}
	\label{discr_obj} 
    &\min_{(m_p,m_\theta) \in \mathcal{M}_{ad}} J_h \LRp{\boldsymbol{u}_h, p_h, \theta_h, m_p, m_\theta} 
    \\
    \notag
    &:= \frac{\alpha_{\boldsymbol{u}, C} }{2} \sum_{k=0}^{N-1} \|\boldsymbol{u}_h^{k+1} - \boldsymbol{u}_C\|_{L^{2}(I_k; L^2(\Omega))}^2 +\frac{\alpha_{p, C} }{2}\sum_{k=0}^{N-1} \|p_h^{k+1} - p_C \|_{L^{2}(I_k; L^2(\Omega))}^2
	  \\ \notag
	  &+ \frac{\alpha_{\theta, C} }{2}\sum_{k=0}^{N-1} \|\theta_h^{k+1} - \theta_C \|_{L^{2}(I_k; L^2(\Omega))}^2
          +\frac{\gamma_p}{2} \|m_p\|_{L^{2}(0,T; L^2(\Omega))}^2 + \frac{\gamma_\theta}{2}\|m_\theta\|_{L^{2}(0,T;L^2(\Omega))}^2,
	\end{align}
    with $(\boldsymbol{u}_h, p_h, \theta_h) \in \boldsymbol{V}_{hk} \times Q_{hk} \times \Psi_{hk}$ 
	subject to the fully discrete state scheme \eqref{eq:fullydiscrete-eqs}.
    We use the variational discretization approach of \cite{MR2122182} for both controls $(m_p,m_\theta)$, i.e., we do not fix a finite dimensional approximation of the control spaces. For $(m_p,m_\theta)\in\mathcal{M}_{ad}$, let $(\boldsymbol{u}_h(m_p,m_\theta),p_h(m_p,m_\theta),\theta_h(m_p,m_\theta))$ denote the (unique) solution of the discrete state scheme \eqref{eq:fullydiscrete-eqs} driven by these controls, and define the \emph{reduced discrete functional}
    \[
    	j_h(m_p,m_\theta) := J_h\bigl(\boldsymbol{u}_h(m_p,m_\theta), p_h(m_p,m_\theta), \theta_h(m_p,m_\theta), m_p, m_\theta\bigr),
    \]
    i.e., \eqref{discr_obj} with the discrete state eliminated in favor of the controls, so that the discrete OCP \eqref{eq:fullydiscrete-eqs}--\eqref{discr_obj} is equivalent to minimizing $j_h$ over $\mathcal{M}_{ad}$ (the explicit expanded formula for $j_h$ is given in Lemma~\ref{Lemma:5.7} below).
\begin{theorem}
	There exists a unique solution in $\boldsymbol{V}_{hk} \times Q_{hk} \times \Psi_{hk} \times \mathcal{M}_{ad}$ to the discrete optimal control problem \eqref{eq:fullydiscrete-eqs}-\eqref{discr_obj}.
\end{theorem}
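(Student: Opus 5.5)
The plan is to mirror the proof of Theorem~\ref{existence} at the discrete level, working with the reduced discrete functional $j_h$ on $\mathcal{M}_{ad}$.

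\textbf{Step 1: the discrete control-to-state map.} For fixed initial data and body force, the preceding lemma gives unique solvability of \eqref{eq:fullydiscrete-eqs} at every time step. Inducting over $k=0,\dots,N-1$, the map $(m_p,m_\theta)\mapsto(\boldsymbol{u}_h,p_h,\theta_h)$ is well defined and affine:
\[
(\boldsymbol{u}_h,p_h,\theta_h)=\boldsymbol{y}_h^0+\mathcal{S}_{hk}(m_p,m_\theta),
\]
where $\boldsymbol{y}_h^0$ is the discrete solution with zero controls and $\mathcal{S}_{hk}:L^2(0,T;L^2(\Omega))^2\to \boldsymbol{V}_{hk}\times Q_{hk}\times\Psi_{hk}$ is linear.

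\textbf{Step 2: boundedness of $\mathcal{S}_{hk}$.} The controls enter only through the time averages $\frac{1}{\Delta t}(m_p,q)_{I_k\times\Omega}$, and by Cauchy--Schwarz in time these are bounded by $\Delta t^{-1/2}\|m_p\|_{L^2(I_k;L^2)}\|q\|_{L^2}$. The target space is finite dimensional, and each step solves a uniquely solvable linear system. Hence $\mathcal{S}_{hk}$ is a composition of finitely many bounded linear maps and is therefore bounded; the constant may depend on $h$ and $\Delta t$, which is harmless for existence. A uniform bound is not needed here.

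\textbf{Step 3: existence of a minimizer.} Since $j_h\ge0$, take a minimizing sequence in $\mathcal{M}_{ad}$. The regularization terms with $\gamma_p,\gamma_\theta>0$ bound it in $L^2(0,T;L^2(\Omega))^2$; in fact $\mathcal{M}_{ad}$ is already bounded by the box constraints. $\mathcal{M}_{ad}$ is closed and convex, hence weakly closed, so we extract a weakly convergent subsequence with limit $(\bar m_p,\bar m_\theta)\in\mathcal{M}_{ad}$. Because $\mathcal{S}_{hk}$ is bounded and linear with finite-dimensional range, weak convergence of the controls gives strong convergence of the discrete states. The tracking terms therefore converge, and the control-cost norms are weakly lower semicontinuous. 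Thus $j_h(\bar m_p,\bar m_\theta)\le\liminf j_h=\inf j_h$, and the limit is a minimizer.

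\textbf{Step 4: uniqueness.} The tracking terms are compositions of a convex quadratic with the affine map $\boldsymbol{y}_h^0+\mathcal{S}_{hk}$, so they are convex. The regularization $\frac{\gamma_p}{2}\|m_p\|^2+\frac{\gamma_\theta}{2}\|m_\theta\|^2$ is strictly convex. Hence $j_h$ is strictly convex on the convex set $\mathcal{M}_{ad}$, and the minimizer is unique. The associated discrete state is then unique by Step 1, which yields uniqueness in $\boldsymbol{V}_{hk}\times Q_{hk}\times\Psi_{hk}\times\mathcal{M}_{ad}$.

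\textbf{Main obstacle.} The only point needing care is Step 2 together with the passage to the limit in the state. We must check that the dG(0) time-averaged source terms make $\mathcal{S}_{hk}$ continuous from $L^2(0,T;L^2(\Omega))$ rather than from a space requiring pointwise-in-time values. Finite dimensionality and the per-step unique solvability from the previous lemma settle this directly.
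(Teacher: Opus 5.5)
Your proposal is correct and follows essentially the same route as the paper. Both arguments reduce to minimizing $j_h$ over $\mathcal{M}_{ad}$ via unique discrete solvability, extract a weakly convergent minimizing subsequence in the closed, convex, bounded set $\mathcal{M}_{ad}$, pass to the limit in the finite-dimensional discrete states, and obtain uniqueness from the strict convexity supplied by $\gamma_p,\gamma_\theta>0$; your Cauchy--Schwarz-in-time bound on the dG(0) source terms and your remark that finite-dimensional range turns weak convergence of the controls into strong convergence of the states simply spell out what the paper states briefly.
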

\begin{proof}
	The discrete state associated with each admissible control pair $(m_p,m_\theta)\in\mathcal{M}_{ad}$ is uniquely defined by the discrete well-posedness result established above. Hence the discrete OCP can be written as the minimization of the reduced discrete functional $j_h(m_p,m_\theta)$, defined above, over $\mathcal{M}_{ad}$. The terms $\gamma_p\|m_p\|^2/2+\gamma_\theta\|m_\theta\|^2/2$ make every minimizing sequence bounded in $L^2(0,T;L^2(\Omega))^2$. Since $\mathcal{M}_{ad}$ is closed, convex, and bounded, hence weakly closed, a subsequence converges weakly to an admissible pair $(\bar{m}_p,\bar{m}_\theta)\in\mathcal{M}_{ad}$. The discrete control-to-state map is linear and bounded, 
    so the corresponding discrete states converge weakly in the finite-dimensional state space $\boldsymbol{V}_{hk}\times Q_{hk}\times\Psi_{hk}$, which for fixed $h,k$ is a finite-dimensional space. Weak lower semicontinuity of the tracking and regularization terms then gives existence of a minimizer. Strict convexity of $j_h$, due to $\gamma_p,\gamma_\theta>0$, gives uniqueness.
\end{proof}
For the backward dG(0) adjoint, we use the convention
\[
    (\boldsymbol{w}_h,r_h,\phi_h)|_{I_k} = (\boldsymbol{w}_h^k,r_h^k,\phi_h^k),
\]
whereas the forward state is represented on $I_k$ by
\[
    (\boldsymbol{u}_h,p_h,\theta_h)|_{I_k} = (\boldsymbol{u}_h^{k+1},p_h^{k+1},\theta_h^{k+1}).
\]
This convention is the natural one obtained by transposing the forward dG(0) scheme: the adjoint equation on $I_k$ is driven by the state residual on the same interval, $(\boldsymbol{u}_h^{k+1}-\boldsymbol{u}_C,p_h^{k+1}-p_C,\theta_h^{k+1}-\theta_C)$, while the adjoint unknowns themselves are indexed one step earlier, at $k$, consistently with marching backward from the terminal condition at $k=N$. The discrete variational inequalities and the projection formula below accordingly use $r_h^k$, $\phi_h^k$ on $I_k$.
\begin{theorem}
    A control pair $(m_p,m_\theta) \in \mc{M}_{ad}$ and the corresponding state $(\boldsymbol{u}_h, p_h, \theta_h) \in \boldsymbol{V}_{hk} \times Q_{hk} \times \Psi_{hk}$ solve the discrete OCP \eqref{discr_obj}, \eqref{eq:fullydiscrete-eqs} if and only if there exists a discrete adjoint state $(\boldsymbol{w}_h, r_h, \phi_h) \in \boldsymbol{V}_{hk} \times Q_{hk} \times \Psi_{hk}$ satisfying:
	\begin{subequations}
		\label{eq:fullydiscreteadj-eqs11}
		\begin{align}
			\label{eq:fullydiscreteadj-eq12}
			-a_{\boldsymbol{u}} \LRp{\dt \boldsymbol{w}_h^{k+1}, \boldsymbol{v}} - b_p \LRp{\dt r_h^{k+1}, \boldsymbol{v}} - b_\theta(\dt \phi_h^{k+1},\boldsymbol{v}) &= \frac{\alpha_{\boldsymbol{u}, C}}{\Delta t} (\boldsymbol{u}_h^{k+1} - \boldsymbol{u}_{C}, \boldsymbol{v})_{I_k \times \Omega},  
			\\
			\label{eq:fullydiscreteadj-eq23}
			-b_p \LRp{q, \dt \boldsymbol{w}_h^{k+1}} + \LRp{s_{pp} \dt r_h^{k+1}, q}_{\Omega} + (s_{p\theta}\dt \phi_h^{k+1},q)_\Omega - a_p\LRp{ r_h^{k}, q} &= \frac{\alpha_{p, C}}{\Delta t} (p_h^{k+1} - p_C, q)_{I_k \times \Omega},\\
			\label{eq:fullydiscreteadj-eq34}
			-b_\theta(\psi,\dt \boldsymbol{w}_h^{k+1}) + (s_{\theta p}\dt r_h^{k+1},\psi)_\Omega + (s_{\theta\theta}\dt \phi_h^{k+1},\psi)_\Omega - a_\theta(\phi_h^k,\psi) &= \frac{\alpha_{\theta,C}}{\Delta t}(\theta_h^{k+1}-\theta_C,\psi)_{I_k\times\Omega},
		\end{align}
	\end{subequations}
	for all $(\boldsymbol{v}, q, \psi) \in \boldsymbol{V}_h \times Q_h \times \Psi_h$, $k=0,1,\ldots,N-1$, with terminal conditions $\boldsymbol{w}_h^N = 0$, $r_h^N = 0$, $\phi_h^N=0$, and the discrete variational inequalities:
		\begin{align}
			\label{eq:varineq-g}
			\sum_{k=0}^{N-1}(\gamma_p m_p + r_h^{k}, \tilde{m}_p - m_p)_{I_k\times \Omega} &\ge 0 \qquad \forall \tilde{m}_p \in \mc{M}_{ad}^p, \\
			\label{eq:varineq-m}
			\sum_{k=0}^{N-1}(\gamma_\theta m_\theta + \phi_h^{k},\tilde{m}_\theta - m_\theta)_{I_k\times \Omega} &\ge 0 \qquad \forall \tilde{m}_\theta \in \mc{M}_{ad}^\theta.
		\end{align}
\end{theorem}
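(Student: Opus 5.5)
Since the discrete problem is a strictly convex minimization of $j_h$ over the closed convex set $\mathcal{M}_{ad}$ (existence and uniqueness from the preceding theorem), a pair $(m_p,m_\theta)$ is optimal if and only if $j_h'(m_p,m_\theta)(\tilde m_p-m_p,\tilde m_\theta-m_\theta)\ge 0$ for all $(\tilde m_p,\tilde m_\theta)\in\mathcal{M}_{ad}$. Sufficiency of this inequality follows from convexity of $j_h$, and necessity from differentiability. The proof therefore reduces to computing $j_h'$ in adjoint form. Because the two controls enter separately, the joint inequality splits into \eqref{eq:varineq-g} and \eqref{eq:varineq-m}: take $\tilde m_\theta=m_\theta$, respectively $\tilde m_p=m_p$.

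First I would show that the backward system \eqref{eq:fullydiscreteadj-eqs11} with zero terminal data is uniquely solvable. Given $(\boldsymbol{w}_h^{k+1},r_h^{k+1},\phi_h^{k+1})$, the unknowns at step $k$ satisfy a square linear system. With zero data, multiplying by $-\Delta t$ gives exactly the homogeneous system treated in the discrete well-posedness lemma, now for $(\boldsymbol{w}_h^k,r_h^k,\phi_h^k)$ and up to sign conventions. Testing with $\boldsymbol{v}=\boldsymbol{w}_h^k$, $q=-r_h^k$, $\psi=-\phi_h^k$ yields a sum of nonnegative terms, so the solution is zero. Hence the adjoint exists, and the equivalence becomes a statement about the variational inequalities alone.

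Next, fix a direction $(\delta m_p,\delta m_\theta)$ and let $(\delta\boldsymbol{u}_h^{k},\delta p_h^k,\delta\theta_h^k)$ solve \eqref{eq:fullydiscrete-eqs} with zero $\boldsymbol{f}$, zero initial data and sources $(\delta m_p,\delta m_\theta)$. By the chain rule,
\[
j_h'(m)(\delta m)=\sum_{k=0}^{N-1}\Bigl[\alpha_{\boldsymbol{u},C}(\boldsymbol{u}_h^{k+1}-\boldsymbol{u}_C,\delta\boldsymbol{u}_h^{k+1})_{I_k\times\Omega}+\alpha_{p,C}(\cdots)+\alpha_{\theta,C}(\cdots)\Bigr]+\gamma_p(m_p,\delta m_p)+\gamma_\theta(m_\theta,\delta m_\theta).
\]
I would replace each tracking term using the adjoint equations \eqref{eq:fullydiscreteadj-eqs11}, multiplied by $\Delta t$, with test functions $\boldsymbol{v}=\delta\boldsymbol{u}_h^{k+1}$, $q=\delta p_h^{k+1}$, $\psi=\delta\theta_h^{k+1}$, and then sum over $k$.

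The main obstacle is the discrete summation by parts that moves the difference quotients from the adjoint onto the linearized state, with the correct index shift. I would use
\[
-\sum_{k=0}^{N-1}(\varphi^{k+1}-\varphi^k)\,\xi^{k+1}=\sum_{k=0}^{N-1}\varphi^k(\xi^{k+1}-\xi^k)+\varphi^0\xi^0-\varphi^N\xi^N .
\]
The boundary terms vanish because $\xi^0=0$ (zero initial data for the sensitivity) and $\varphi^N=0$ (zero terminal data). This identity applies to each bilinear pairing: $a_{\boldsymbol{u}}$, $b_p$, $b_\theta$, and the $\boldsymbol{S}$-terms. Symmetry of $a_{\boldsymbol{u}}$ and $\boldsymbol{S}$, together with the transposed placement of $b_p$ and $b_\theta$, then reproduces the left-hand sides of the forward equations \eqref{eq:fullydiscrete-eqs} for the sensitivity, tested with $(\boldsymbol{w}_h^k,-r_h^k,-\phi_h^k)$ and multiplied by $\Delta t$.

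The diffusion terms need no summation by parts: the adjoint's $a_p(r_h^k,\delta p_h^{k+1})$ matches the forward $a_p(\delta p_h^{k+1},r_h^k)$ directly. This is precisely why the adjoint uses the index convention $r_h^k$, $\phi_h^k$ on $I_k$. Substituting the forward equations leaves
\[
\sum_{k=0}^{N-1}\bigl[(r_h^k,\delta m_p)_{I_k\times\Omega}+(\phi_h^k,\delta m_\theta)_{I_k\times\Omega}\bigr].
\]
I would track the signs carefully, since the forward sources carry signs via $(m_p,q)$ with the minus structure. With $\delta m=\tilde m-m$, this gives
\[
j_h'(m)(\tilde m-m)=\sum_{k=0}^{N-1}\bigl[(\gamma_p m_p+r_h^k,\tilde m_p-m_p)_{I_k\times\Omega}+(\gamma_\theta m_\theta+\phi_h^k,\tilde m_\theta-m_\theta)_{I_k\times\Omega}\bigr],
\]
and the equivalence follows.
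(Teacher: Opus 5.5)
Your argument is correct and is essentially the paper's proof in reduced form. The paper differentiates the finite-dimensional Lagrangian and collects the coefficients of state and control variations, which amounts to your sensitivity equation plus discrete summation by parts; your explicit unique solvability of the backward system and your convexity argument for the converse direction fill in steps the paper leaves implicit. One sign slip needs fixing: after summation by parts, the regrouped terms are $\Delta t$ times the forward sensitivity equations tested with $(\boldsymbol{w}_h^k, r_h^k, \phi_h^k)$, not with $(\boldsymbol{w}_h^k,-r_h^k,-\phi_h^k)$. The minus signs belong only to the energy test in your solvability step; with them here you would get $\gamma_p m_p - r_h^k$ instead of the correct $\gamma_p m_p + r_h^k$ that you state at the end.
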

\begin{proof}
	The result follows by differentiating the finite-dimensional Lagrangian associated with \eqref{discr_obj} and \eqref{eq:fullydiscrete-eqs}. Collecting the coefficients of arbitrary state variations $(\delta\boldsymbol{u}_h^{k+1},\delta p_h^{k+1},\delta\theta_h^{k+1})$, $k=0,\ldots,N-1$, yields the discrete adjoint system \eqref{eq:fullydiscreteadj-eqs11}, while collecting the coefficients of arbitrary control variations $(\delta m_p,\delta m_\theta)\in\mathcal{M}_{ad}-(m_p,m_\theta)$ yields the discrete variational inequalities \eqref{eq:varineq-g}--\eqref{eq:varineq-m}. The index convention follows from transposing the dG(0) forward scheme: the state is represented on $I_k$ by the right endpoint value $(\boldsymbol{u}_h^{k+1},p_h^{k+1},\theta_h^{k+1})$, whereas the adjoint variable paired with the state residual on $I_k$ is $(\boldsymbol{w}_h^k,r_h^k,\phi_h^k)$; consequently the diffusion terms $a_p(r_h^k,\cdot)$, $a_\theta(\phi_h^k,\cdot)$ in \eqref{eq:fullydiscreteadj-eqs11} appear at level $k$, one step earlier than the differenced terms $\dt \boldsymbol{w}_h^{k+1}$, $\dt r_h^{k+1}$, $\dt \phi_h^{k+1}$, and this same convention is what makes $r_h^k$, $\phi_h^k$ (rather than $r_h^{k+1}$, $\phi_h^{k+1}$) the correct terms to appear in the projection formula and variational inequality. 
\end{proof}
The adjoint state system \eqref{eq:fullydiscreteadj-eqs11} can be written in the explicit stepping form:
\begin{subequations}
	\label{eq:fullydiscreteadj-eqs11a}
	\begin{align}
		\label{eq:fullydiscreteadj-eq12a}
		  &a_{\boldsymbol{u}} \LRp{\boldsymbol{w}_h^{k}, \boldsymbol{v}} + b_p \LRp{r_h^{k}, \boldsymbol{v}} + b_\theta(\phi_h^k,\boldsymbol{v}) 
          \\
          \notag
          &=  \alpha_{\boldsymbol{u}, C}  (\boldsymbol{u}_h^{k+1} - \boldsymbol{u}_{C}, \boldsymbol{v})_{I_k \times \Omega} + a_{\boldsymbol{u}} \LRp{\boldsymbol{w}_h^{k+1}, \boldsymbol{v}} + b_p \LRp{r_h^{k+1}, \boldsymbol{v}} + b_\theta(\phi_h^{k+1},\boldsymbol{v}),
		\\
		\label{eq:fullydiscreteadj-eq23a}
		&b_p \LRp{q,\boldsymbol{w}_h^{k}} - \LRp{s_{pp} r_h^{k}, q}_{\Omega} - (s_{p\theta}\phi_h^k,q)_\Omega - \Delta t a_p\LRp{ r_h^{k}, q} 
        \\
        \notag
        &= \alpha_{p, C}  (p_h^{k+1} - p_C, q)_{I_k \times \Omega}  + b_p \LRp{q, \boldsymbol{w}_h^{k+1}} - \LRp{s_{pp} r_h^{k+1}, q}_{\Omega} - (s_{p\theta}\phi_h^{k+1},q)_\Omega,\\
		\label{eq:fullydiscreteadj-eq34a}
		&b_\theta(\psi,\boldsymbol{w}_h^k) - (s_{\theta p}r_h^k,\psi)_\Omega - (s_{\theta\theta} \phi_h^k,\psi)_\Omega - \Delta t a_\theta(\phi_h^k,\psi) 
        \\
        \notag 
        &= \alpha_{\theta,C}(\theta_h^{k+1}-\theta_C,\psi)_{I_k\times\Omega} 
		+ b_\theta(\psi,\boldsymbol{w}_h^{k+1}) - (s_{\theta p}r_h^{k+1},\psi)_\Omega - (s_{\theta\theta} \phi_h^{k+1},\psi)_\Omega,
	\end{align}    
\end{subequations}
for $k=0,1,\ldots,N-1$.
 The variational inequalities \eqref{eq:varineq-g}--\eqref{eq:varineq-m} combined with the state and adjoint systems \eqref{eq:fullydiscrete-eqs} and \eqref{eq:fullydiscreteadj-eqs11a}, respectively, form a fully discrete optimality system for the optimal control problem. The discrete control variable is obtained by employing a discrete version of the projection formula \eqref{mmproj} as (see \cite[Section~4]{MR2644299} for more details):
 \begin{align}\label{proj}
	m_p|_{I_k\times\Omega} = \mathcal{P}_{\mathcal{M}_{ad}^p}\!\left(-\frac{1}{\gamma_p}r_h^{k}\right), \quad
	m_\theta|_{I_k\times\Omega} = \mathcal{P}_{\mathcal{M}_{ad}^\theta}\!\left(-\frac{1}{\gamma_\theta}\phi_h^{k}\right) \quad \text{for} \ k=0,1,\ldots,N-1.
 \end{align} 
The solutions $m_p$ and $m_\theta$ are piecewise constant on each $I_k$, so $m_p^k$ and $m_\theta^k$ are well-defined.
 

\section{A priori error analysis}\label{A priori Error analysis}
In this section, we derive the a priori error estimates of the fully discrete scheme proposed in Section~\ref{discrete formulation}. Throughout this section, the right-hand side $\boldsymbol{f}$, the initial data $(\boldsymbol{u}(0), p(0), \theta(0)) \in \boldsymbol{V} \times Q \times \Psi$, the observation data $\boldsymbol{u}_C$, $p_C$, $\theta_C$, and the coefficients $\alpha_{\boldsymbol{u},C}$, $\alpha_{p,C}$, $\alpha_{\theta,C}$, $\gamma_p$, $\gamma_\theta$ are fixed. All discrete forward equations use the same numerical initial data $(\boldsymbol{u}_h^0, p_h^0, \theta_h^0) \in \boldsymbol{V}_h \times Q_h \times \Psi_h$ satisfying 
\begin{multline}
    \label{eq:initial-data-assumption}
    \|\boldsymbol{u}(0)-\boldsymbol{u}_h^0\|_{H^1(\Omega)}+\|p(0)-p_h^0\|_{H^1(\Omega)}+\|\theta(0)-\theta_h^0\|_{H^1(\Omega)}
    \\
    \le C h^{s}\bigl(\|\boldsymbol{u}(0)\|_{H^{1+s}(\Omega)}+\|p(0)\|_{H^{1+s}(\Omega)}+\|\theta(0)\|_{H^{1+s}(\Omega)}\bigr), \quad 0< s \le 1.
\end{multline}
Note that $(\boldsymbol{u}_h^0, p_h^0, \theta_h^0)$ do not need to satisfy the mechanical equilibrium \eqref{sys1} at $t=0$. 

\subsection{Analysis of auxiliary forward equation}
Consistent with the well-posedness argument of Section~\ref{sec:well-posedness}, the discrete forward error estimate below is derived using a \emph{discrete effective mass} form, 
given the discrete inf-sup condition \eqref{eq:disc-infsup-combination}.

For $y_h=(p_h,\theta_h)\in Q_h\times\Psi_h$, define the discrete elastic lift $\boldsymbol{u}_{y,h}\in\boldsymbol{V}_h$ by
\begin{equation}\label{eq:disc-elastic-lift}
	a_{\boldsymbol{u}}(\boldsymbol{u}_{y,h},\boldsymbol{v}_h) + b_p(p_h,\boldsymbol{v}_h) + b_\theta(\theta_h,\boldsymbol{v}_h) = 0 \qquad \forall\boldsymbol{v}_h\in\boldsymbol{V}_h,
\end{equation}
and the discrete effective mass form
\begin{equation}\label{eq:disc-eff-mass}
	m_{{\rm eff},h}(y_h,z_h) := (\boldsymbol{S}y_h,z_h)_\Omega + a_{\boldsymbol{u}}(\boldsymbol{u}_{y,h},\boldsymbol{u}_{z,h}), \qquad y_h,z_h\in Q_h\times\Psi_h.
\end{equation}

\begin{lemma}\label{lem:disc-effective-coercivity}
	There exists $C_{\rm eff}>0$, independent of $h$, such that
	\begin{equation}\label{eq:disc-meff-coercive}
		m_{{\rm eff},h}(y_h,y_h) \ge C_{\rm eff}\bigl(\|p_h\|_{L^2(\Omega)}^2+\|\theta_h\|_{L^2(\Omega)}^2\bigr) \qquad \forall y_h=(p_h,\theta_h)\in Q_h\times\Psi_h.
	\end{equation}
\end{lemma}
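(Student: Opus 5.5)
The plan is to mirror the continuous proof of coercivity of $m_{\rm eff}$ in Lemma~\ref{lem:effective-storage}. The discrete inf-sup condition \eqref{eq:disc-infsup-combination} will replace \eqref{eq:continuous-div-infsup}. The pointwise algebraic inequality derived from the effective storage condition \eqref{eq:S-kernel-condition} is used unchanged, since it involves no discretization.

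\emph{Step 1: the discrete lift controls the mechanically visible combination.} Fix $y_h=(p_h,\theta_h)\in Q_h\times\Psi_h$ and set $\chi_h:=\alpha_p p_h+\alpha_\theta\theta_h\in\alpha_pQ_h+\alpha_\theta\Psi_h$. For every $\boldsymbol{v}_h\in\boldsymbol{V}_h$, the definition \eqref{eq:disc-elastic-lift} and the identity $b_p(p_h,\boldsymbol{v}_h)+b_\theta(\theta_h,\boldsymbol{v}_h)=-(\chi_h,\dive\boldsymbol{v}_h)_\Omega$ give
\[
(\chi_h,\dive\boldsymbol{v}_h)_\Omega=a_{\boldsymbol{u}}(\boldsymbol{u}_{y,h},\boldsymbol{v}_h)\le \|\boldsymbol{u}_{y,h}\|_{a_{\boldsymbol{u}}}\,C\|\boldsymbol{v}_h\|_{\boldsymbol{V}},
\]
using boundedness of $a_{\boldsymbol{u}}$. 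Dividing by $\|\boldsymbol{v}_h\|_{\boldsymbol{V}}$, taking the supremum over $\boldsymbol{V}_h$, and applying \eqref{eq:disc-infsup-combination} yields
\[
\|\chi_h\|_{L^2(\Omega)}^2\le C\,a_{\boldsymbol{u}}(\boldsymbol{u}_{y,h},\boldsymbol{u}_{y,h}),
\]
with $C$ independent of $h$. This is the only step where the discrete structure matters. Its key feature is that the supremum ranges over $\boldsymbol{V}_h$, which is exactly the space used to define the discrete lift, so no consistency error appears.

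\emph{Step 2: pointwise algebra.} As in the proof of Lemma~\ref{lem:effective-storage}, the quadratic form $\eta\mapsto|c\cdot\eta|^2+\eta^\top\boldsymbol{S}\eta$ is positive definite on $\mathbb{R}^2$ by \eqref{eq:S-kernel-condition}. Hence $|\eta|^2\le C(|c\cdot\eta|^2+\eta^\top\boldsymbol{S}\eta)$, with $C$ depending only on $\alpha_p,\alpha_\theta,\boldsymbol{S}$. Applying this with $\eta=(p_h(x),\theta_h(x))$ and integrating over $\Omega$ gives
\[
\|p_h\|_{L^2(\Omega)}^2+\|\theta_h\|_{L^2(\Omega)}^2\le C\bigl(\|\chi_h\|_{L^2(\Omega)}^2+(\boldsymbol{S}y_h,y_h)_\Omega\bigr).
\]

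\emph{Step 3: combine.} Inserting the bound from Step 1 into Step 2 gives
\[
\|p_h\|_{L^2(\Omega)}^2+\|\theta_h\|_{L^2(\Omega)}^2\le C\bigl(a_{\boldsymbol{u}}(\boldsymbol{u}_{y,h},\boldsymbol{u}_{y,h})+(\boldsymbol{S}y_h,y_h)_\Omega\bigr)\le C\,m_{{\rm eff},h}(y_h,y_h),
\]
which is \eqref{eq:disc-meff-coercive} with $C_{\rm eff}=1/C$. The constant depends only on the inf-sup constant in \eqref{eq:disc-infsup-combination}, the continuity constant of $a_{\boldsymbol{u}}$, and the data in \eqref{eq:S-kernel-condition}, so it is independent of $h$.

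The main obstacle is ensuring the $h$-uniformity of Step 1. This rests entirely on the assumed uniform discrete inf-sup condition \eqref{eq:disc-infsup-combination}, so no further argument should be needed beyond noting that $\chi_h$ lies in the admissible space $\alpha_pQ_h+\alpha_\theta\Psi_h$.
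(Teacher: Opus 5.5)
Your proof is correct and follows the paper's argument essentially step for step. Both combine the pointwise bound $|\eta|^2\le C(|c\cdot\eta|^2+\eta^\top\boldsymbol{S}\eta)$, integrated over $\Omega$, with the discrete inf-sup condition applied to $\chi_h=\alpha_p p_h+\alpha_\theta\theta_h$ via the lift identity $a_{\boldsymbol{u}}(\boldsymbol{u}_{y,h},\boldsymbol{v}_h)=(\chi_h,\dive\boldsymbol{v}_h)_\Omega$. The only cosmetic difference is that you bound this term by Cauchy--Schwarz in the $a_{\boldsymbol{u}}$-inner product plus continuity, whereas the paper bounds it by $C\|\boldsymbol{u}_{y,h}\|_{\boldsymbol{V}}$ and then invokes coercivity of $a_{\boldsymbol{u}}$.
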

\begin{proof}
By the effective storage condition \eqref{eq:S-kernel-condition}, there exists $C>0$ such that $|\eta|^2\le C(|c\cdot\eta|^2+\eta^\top\boldsymbol{S}\eta)$ for all $\eta\in\mathbb{R}^2$. Applying this pointwise to $\eta=(p_h(x),\theta_h(x))$ and integrating,
\begin{equation}\label{eq:pointwise-to-L2-eff}
	\|p_h\|_{L^2}^2+\|\theta_h\|_{L^2}^2 \le C\bigl(\|\alpha_p p_h+\alpha_\theta\theta_h\|_{L^2}^2 + (\boldsymbol{S}y_h,y_h)_\Omega\bigr).
\end{equation}
By \eqref{eq:disc-infsup-combination} and \eqref{eq:disc-elastic-lift},
\begin{align*}
	\|\alpha_p p_h+\alpha_\theta\theta_h\|_{L^2} &\le C\sup_{\boldsymbol{v}_h\in\boldsymbol{V}_h}\frac{|b_p(p_h,\boldsymbol{v}_h)+b_\theta(\theta_h,\boldsymbol{v}_h)|}{\|\boldsymbol{v}_h\|_{\boldsymbol{V}}} 
    \\
    &= C\sup_{\boldsymbol{v}_h\in\boldsymbol{V}_h}\frac{|a_{\boldsymbol{u}}(\boldsymbol{u}_{y,h},\boldsymbol{v}_h)|}{\|\boldsymbol{v}_h\|_{\boldsymbol{V}}} \le C\|\boldsymbol{u}_{y,h}\|_{\boldsymbol{V}},
\end{align*}
so, using coercivity of $a_{\boldsymbol{u}}$, $\|\alpha_p p_h+\alpha_\theta\theta_h\|_{L^2}^2 \le C\,a_{\boldsymbol{u}}(\boldsymbol{u}_{y,h},\boldsymbol{u}_{y,h})$. Combining with \eqref{eq:pointwise-to-L2-eff} gives \eqref{eq:disc-meff-coercive}.
\end{proof}
Let us denote $\Pi_h^{\boldsymbol{u}} : \boldsymbol{V} \to \boldsymbol{V}_h$, $\Pi_h^{p}: Q \to Q_h$, and $\Pi_h^{\theta}: \Psi \to \Psi_h$ the elliptic projections defined by 
\begin{align}
    \label{eq:elasticity-elliptic-projection}
    a_{\boldsymbol{u}} (\Pi_h^{\boldsymbol{u}} \boldsymbol{v}, \boldsymbol{w}) &=     a_{\boldsymbol{u}} ( \boldsymbol{v}, \boldsymbol{w}) \qquad \forall \boldsymbol{w} \in \boldsymbol{V}_h,
    \\
    \label{eq:poisson-elliptic-projection}
    a_p( \Pi_h^p q, r) &= a_p( q, r) \qquad \forall r \in Q_h, \\
    \label{eq:theta-elliptic-projection}
    a_\theta(\Pi_h^\theta \psi, \zeta) &= a_\theta(\psi, \zeta) \qquad \forall \zeta \in \Psi_h.
\end{align}
It is well-known that 
\begin{align}
    \label{eq:elliptic-projection-approx1}   
    &\| \boldsymbol{v} - \Pi_h^{\boldsymbol{u}} \boldsymbol{v} \|_{a_{\boldsymbol{u}}} \le Ch^{s} \|\boldsymbol{v}\|_{H^{1+s}(\Omega)}, 
    \qquad 
    \| q - \Pi_h^{p} q \|_{a_{p}} \le Ch^{s} \|q\|_{H^{1+s}(\Omega)},
    \\
    \label{eq:elliptic-projection-approx2}   
    &\|\psi - \Pi_h^\theta \psi\|_{a_\theta} \le Ch^s \|\psi\|_{H^{1+s}(\Omega)}
\end{align}
for $0\le s\le 1$, up to the approximation order of the finite element spaces.
\begin{lemma}\label{lemma:aux-forward-error}
	Let $(\boldsymbol{u}, p, \theta, \bar{m}_p, \bar{m}_\theta)$ be the solution of the optimal control problem \eqref{obj_fun}. Suppose that $(\boldsymbol{u}_h(\bar{m}_p,\bar{m}_\theta), p_h(\bar{m}_p,\bar{m}_\theta), \theta_h(\bar{m}_p,\bar{m}_\theta)) \in \boldsymbol{V}_{hk} \times Q_{hk} \times \Psi_{hk}$ solves \eqref{eq:fullydiscrete-eqs} for the continuous optimal controls $(\bar{m}_p,\bar{m}_\theta)$ with numerical initial data $(\boldsymbol{u}_h^0, p_h^0, \theta_h^0)$ satisfying \eqref{eq:initial-data-assumption} for some $0<s\le 1$. Assume also that 
    \[
        \boldsymbol{u}\in H^1(0,T;\boldsymbol{H}^{1+s}(\Omega)), \quad  p\in H^1(0,T;H^{1+s}(\Omega)), \quad \theta\in H^1(0,T;H^{1+s}(\Omega)).
    \]
    Then, 
	\begin{align}
        \label{eq:forward-aux-estimate}
		&\max_{1\le k \le N} \LRp{\| \boldsymbol{u}^k - \boldsymbol{u}_h^k\|_{H^1(\Omega)} + \| p^k - p_h^k\|_{L^2(\Omega)} + \|\theta^k - \theta_h^k\|_{L^2(\Omega)}} 
        \\
        \notag
		&+ \| \boldsymbol{u} - \boldsymbol{u}_h\|_{L^2(0,T; H^1(\Omega))} + \| p - p_h\|_{L^2(0,T; L^2(\Omega))} + \|\theta - \theta_h\|_{L^2(0,T;L^2(\Omega))}
        \\
        \notag
        &\le h^{s}(\|\boldsymbol{u}(0)\|_{H^{1+s}(\Omega)}+\|p(0)\|_{H^{1+s}(\Omega)}+\|\theta(0)\|_{H^{1+s}(\Omega)}) 
        \\
        \notag
        &\quad + C(h^s+\Delta t)
        \left(
        \|\boldsymbol{u}\|_{H^1(0,T;H^{1+s}(\Omega))}
        +
        \|p\|_{H^1(0,T;H^{1+s}(\Omega))}
        +
        \|\theta\|_{H^1(0,T;H^{1+s}(\Omega))}
        \right) .
    \end{align}
\end{lemma}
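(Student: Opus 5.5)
We split each error by the elliptic projections \eqref{eq:elasticity-elliptic-projection}--\eqref{eq:theta-elliptic-projection}:
\[
\boldsymbol{u}^k-\boldsymbol{u}_h^k=(\boldsymbol{u}^k-\Pi_h^{\boldsymbol{u}}\boldsymbol{u}^k)+(\Pi_h^{\boldsymbol{u}}\boldsymbol{u}^k-\boldsymbol{u}_h^k)=:\rho_{\boldsymbol{u}}^k+e_{\boldsymbol{u}}^k,
\]
and analogously $\rho_p^k+e_p^k$ and $\rho_\theta^k+e_\theta^k$. The interpolation parts $\rho$ are bounded by \eqref{eq:elliptic-projection-approx1}--\eqref{eq:elliptic-projection-approx2}, together with $H^1(0,T;H^{1+s})\hookrightarrow C^0([0,T];H^{1+s})$ and the $L^2\le C\,a$-norm bounds (Poincaré/Korn, using $|\Gamma_p|,|\Gamma_\theta|,|\Gamma_d|>0$). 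These contribute $Ch^s$ times the stated norms both at the nodes and in $L^2(0,T)$. What remains is to bound the discrete errors $e$.

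For $e$ we average the continuous equations \eqref{eq:new-weak-n-eqs} over $I_k$ with discrete test functions and subtract \eqref{eq:fullydiscrete-eqs}. Since $\frac{1}{\Delta t}\int_{I_k}\dot{\boldsymbol{u}}\,dt=\dt\boldsymbol{u}^{k+1}$ exactly, the only consistency defect in the time-derivative terms comes from the projections. The diffusion terms produce a defect $a_p(\bar p^{k+1}-p^{k+1},q)$. Using the definition of $\Pi_h$, the error equations read
\begin{align*}
a_{\boldsymbol{u}}(\dt e_{\boldsymbol{u}}^{k+1},\boldsymbol{v})+b_p(\dt e_p^{k+1},\boldsymbol{v})+b_\theta(\dt e_\theta^{k+1},\boldsymbol{v})&=-b_p(\dt\rho_p^{k+1},\boldsymbol{v})-b_\theta(\dt\rho_\theta^{k+1},\boldsymbol{v}),\\
b_p(q,\dt e_{\boldsymbol{u}}^{k+1})-(\boldsymbol{S}\dt e_y^{k+1},(q,0))-a_p(e_p^{k+1},q)&=-b_p(q,\dt\rho_{\boldsymbol{u}}^{k+1})+(\boldsymbol{S}\dt\rho_y^{k+1},(q,0))+a_p(\bar p^{k+1}-p^{k+1},q),
\end{align*}
and similarly for $\theta$. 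Here $a_{\boldsymbol{u}}(\dt\rho_{\boldsymbol{u}},\cdot)$ vanishes on $\boldsymbol{V}_h$.

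The energy estimate uses the test functions $\boldsymbol{v}=\dt e_{\boldsymbol{u}}^{k+1}$, $q=-\dt e_p^{k+1}$, $\psi=-\dt e_\theta^{k+1}$ and sums the three equations. The $b$-terms cancel, leaving
\[
\|\dt e_{\boldsymbol{u}}^{k+1}\|_{a_{\boldsymbol{u}}}^2+(\boldsymbol{S}\dt e_y,\dt e_y)+\dt\tfrac12\bigl(\|e_p^{k+1}\|_{a_p}^2+\|e_\theta^{k+1}\|_{a_\theta}^2\bigr)\le\text{RHS}.
\]
This controls $\dt e_{\boldsymbol{u}}$ and the $\boldsymbol{S}$-part of $\dt e_y$, but not $\dt e_y$ in $L^2$. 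We recover the latter from the first error equation via the discrete inf-sup condition \eqref{eq:disc-infsup-combination}, exactly as in Lemma~\ref{lem:disc-effective-coercivity}:
\[
\|\alpha_p\dt e_p+\alpha_\theta\dt e_\theta\|\le C\bigl(\|\dt e_{\boldsymbol{u}}\|_{\boldsymbol{V}}+\|\dt\rho_p\|+\|\dt\rho_\theta\|\bigr).
\]
The pointwise effective-storage inequality then gives $\|\dt e_y\|_H$. Alternatively one could eliminate $\boldsymbol{u}$ through the discrete lift and work with $m_{{\rm eff},h}$.

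The right-hand side terms are handled as follows. The $\rho$ terms are bounded by $\|\dt\rho\|\le \Delta t^{-1/2}\|\dot\rho\|_{L^2(I_k)}$ and absorbed by Young's inequality. The time-averaging defect $a_p(\bar p^{k+1}-p^{k+1},\dt e_p^{k+1})$ is the main obstacle, since it cannot be placed against $\dt e_p$ in $L^2$. We handle it by summation by parts in $k$, moving the difference quotient onto $\bar p-p^{k+1}$, i.e. onto $\dot p$. This yields terms like $\Delta t\|\dot p\|_{L^2(I_k;H^1)}$ times $\|e_p\|_{a_p}$, plus boundary terms at $k=N$ and $k=0$. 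All of these are $O(\Delta t)\|p\|_{H^1(0,T;H^1)}$ after a discrete Gronwall argument. If this step fails, a fallback is to first test with $(e_{\boldsymbol{u}},-e_p,-e_\theta)$ for the $L^2$-type bounds and treat the defect as $a_p$-bounded by $\Delta t^{1/2}\|\dot p\|_{L^2(I_k;H^1)}$.

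Summing over $k$ and applying discrete Gronwall bounds $\max_k\|e_p^k\|_{a_p}$, $\sum\Delta t\|\dt e\|^2$, and hence $\max_k\|e_{\boldsymbol{u}}^k\|_{H^1}\le\|e_{\boldsymbol{u}}^0\|+\sqrt{T}(\sum\Delta t\|\dt e_{\boldsymbol{u}}\|^2)^{1/2}$. The initial errors $e^0$ are controlled by \eqref{eq:initial-data-assumption} together with the projection bounds, giving the leading $h^s(\ldots)$ term. The max-in-time $L^2$ bounds for $e_p,e_\theta$ follow similarly. Finally, the $L^2(0,T)$ norms follow from the nodal bounds plus $\|\boldsymbol{u}-\boldsymbol{u}^{k+1}\|_{L^2(I_k;H^1)}\le\Delta t\|\dot{\boldsymbol{u}}\|_{L^2(I_k;H^1)}$ (and likewise for $p,\theta$), which completes \eqref{eq:forward-aux-estimate}.
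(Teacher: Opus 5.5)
There is a genuine gap at exactly the step you flag as the main obstacle. Because you test with the difference quotients $(\dt e_{\boldsymbol u}^{k+1},-\dt e_p^{k+1},-\dt e_\theta^{k+1})$, the diffusion defect enters as $\sum_k\Delta t\,a_p(\delta^{k+1},\dt e_p^{k+1})$ with $\delta^{k+1}:=\bar p^{k+1}-p^{k+1}$. Under the hypothesis $p\in H^1(0,T;H^{1+s}(\Omega))$, summation by parts does not produce a factor $\Delta t$. It leaves $\sum_k a_p(\delta^{k+1}-\delta^k,e_p^k)$. With only $\dot p\in L^2(0,T;H^1)$, the best available bound is $\|\delta^{k+1}-\delta^k\|_{a_p}\le C\Delta t^{1/2}\|\dot p\|_{L^2(t_{k-1},t_{k+1};H^1)}$; gaining a further power of $\Delta t$ would require $\ddot p\in L^2(0,T;H^1)$. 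Summed over $N$ steps, this gives only $C\sqrt T\|\dot p\|_{L^2(0,T;H^1)}\max_k\|e_p^k\|_{a_p}$, with no rate at all. Even your stated per-term bound $\Delta t\|\dot p\|_{L^2(I_k;H^1)}\|e_p^k\|_{a_p}$ would sum only to $O(\Delta t^{1/2})$. The endpoint term $a_p(\delta^m,e_p^m)$ is likewise only $O(\Delta t^{1/2})$ unless $\dot p\in L^\infty(0,T;H^1)$. The assumed class contains, e.g., $p=t^{3/5}\varphi(x)$, for which $\sum_k\|\delta^{k+1}-\delta^k\|_{a_p}\sim\Delta t^{3/5}$. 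A discrete Gronwall argument cannot improve the order of a data term. The root cause is that the $\dt e$ test aims at an $L^\infty(0,T;H^1)$ bound for $e_p,e_\theta$. That is stronger than the lemma and costs time regularity you do not have. (For $s=1$ one could instead write $a_p(\delta^{k+1},\dt e_p^{k+1})=-(\dive(\kappa_p\nabla\delta^{k+1}),\dt e_p^{k+1})_\Omega$ and use $\dot p\in L^2(0,T;H^2)$, but this fails for $s<1$.)

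Your fallback does not fix this as written. In the rate form, testing with $(e_{\boldsymbol u},-e_p,-e_\theta)$ does not cancel the coupling, since $b_p(\dt e_p^{k+1},e_{\boldsymbol u}^{k+1})\ne b_p(e_p^{k+1},\dt e_{\boldsymbol u}^{k+1})$. Moreover, your displacement bound uses $\Delta t\sum_k\|\dt e_{\boldsymbol u}^{k}\|_{\boldsymbol V}^2$, which only the $\dt e$ test provides.

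The missing idea is the elimination you mention in passing, but used with a different test function. Write $\dt e_{\boldsymbol u}^{k+1}=\boldsymbol u_F^{k+1}+\boldsymbol u_{\dt\eta^{k+1},h}$, where:
\begin{itemize}
\item $\eta^k:=(e_p^k,e_\theta^k)$;
\item $\boldsymbol u_{y,h}$ is the discrete elastic lift \eqref{eq:disc-elastic-lift};
\item $\boldsymbol u_F^{k+1}\in\boldsymbol V_h$ is the $a_{\boldsymbol u}$-representative of your first right-hand side.
\end{itemize}
This gives $m_{{\rm eff},h}(\dt\eta^{k+1},z_h)+a_p(e_p^{k+1},q)+a_\theta(e_\theta^{k+1},\psi)=\widetilde F^{k+1}(z_h)$ for all $z_h=(q,\psi)\in Q_h\times\Psi_h$, where $\widetilde F^{k+1}$ collects your right-hand sides. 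Test it with $z_h=\eta^{k+1}$, not $\dt\eta^{k+1}$. Then the defect is needed only in the $a_p$-dual norm, where $\Delta t\sum_k\|\delta^{k+1}\|_{a_p}^2\le C\Delta t^2\|\dot p\|_{L^2(0,T;H^1)}^2$ with no second time derivative (likewise for $\theta$). Lemma~\ref{lem:disc-effective-coercivity} then converts the $m_{{\rm eff},h}$-energy into a bound on $\max_k\|\eta^k\|_{L^2}$.

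The displacement is recovered not from $\dt e_{\boldsymbol u}$ but from the telescoped lift identity $e_{\boldsymbol u}^{k}=e_{\boldsymbol u}^{0}+\boldsymbol u_{\eta^k,h}-\boldsymbol u_{\eta^0,h}+\Delta t\sum_{j\le k}\boldsymbol u_F^{j}$. This bounds $\max_k\|e_{\boldsymbol u}^k\|_{\boldsymbol V}$ by $\max_k\|\eta^k\|_{L^2}$ plus the displacement residual, which is the paper's argument.
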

\begin{proof}
In this proof we write $\dt\phi^k := (\phi^k-\phi^{k-1})/\Delta t$ for $k=1,\ldots,N$, corresponding to the interval $(t_{k-1},t_k]$. We decompose each error into an interpolation part and a discrete correction using the elliptic projections:
\begin{align*}
    e_u^j &:= \boldsymbol{u}^j - \boldsymbol{u}_h^j = e_u^{I,j} + e_u^{h,j}, &
    e_u^{I,j} &:= \boldsymbol{u}^j - \Pi_h^{\boldsymbol{u}}\boldsymbol{u}^j, &
    e_u^{h,j} &:= \Pi_h^{\boldsymbol{u}}\boldsymbol{u}^j - \boldsymbol{u}_h^j, \\
    e_p^j &:= p^j - p_h^j = e_p^{I,j} + e_p^{h,j}, &
    e_p^{I,j} &:= p^j - \Pi_h^p p^j, &
    e_p^{h,j} &:= \Pi_h^p p^j - p_h^j, \\
    e_\theta^j &:= \theta^j - \theta_h^j = e_\theta^{I,j} + e_\theta^{h,j}, &
    e_\theta^{I,j} &:= \theta^j - \Pi_h^\theta \theta^j, &
    e_\theta^{h,j} &:= \Pi_h^\theta \theta^j - \theta_h^j,
\end{align*}
for $j = 0,1,\ldots,N$.
Using the continuous equations \eqref{eq:new-weak-n-eqs} integrated over $I_{k-1}=(t_{k-1},t_k]$ and the definitions of the elliptic projections \eqref{eq:elasticity-elliptic-projection}--\eqref{eq:theta-elliptic-projection}, one derives the following three error equations for $(e_u^{h,k},e_p^{h,k},e_\theta^{h,k})$: for all $(\boldsymbol{v},q,\psi)\in\boldsymbol{V}_h\times Q_h\times\Psi_h$,
\begin{align}
\label{eq:err-1}
a_{\boldsymbol{u}}(\dt e_u^{h,k},\boldsymbol{v})
+ b_p(\dt e_p^{h,k},\boldsymbol{v})
+ b_\theta(\dt e_\theta^{h,k},\boldsymbol{v})
&= F_u^k(\boldsymbol{v}),
\\
\label{eq:err-2}
b_p(q,\dt e_u^{h,k})
- (s_{pp}\dt e_p^{h,k},q)_\Omega
- (s_{p\theta}\dt e_\theta^{h,k},q)_\Omega
- a_p(e_p^{h,k},q)
&= F_p^k(q),
\\
\label{eq:err-3}
b_\theta(\psi,\dt e_u^{h,k})
- (s_{\theta p}\dt e_p^{h,k},\psi)_\Omega
- (s_{\theta\theta}\dt e_\theta^{h,k},\psi)_\Omega
- a_\theta(e_\theta^{h,k},\psi)
&= F_\theta^k(\psi),
\end{align}
where the residuals are
\begin{align*}
    F_u^k(\boldsymbol{v}) &:= -b_p(\dt e_p^{I,k},\boldsymbol{v}) - b_\theta(\dt e_\theta^{I,k},\boldsymbol{v}), \\
    F_p^k(q)      &:= -b_p(q,\dt e_u^{I,k}) + (s_{pp}\dt e_p^{I,k},q)_\Omega + (s_{p\theta}\dt e_\theta^{I,k},q)_\Omega + D_p^k(q), \\
    F_\theta^k(\psi) &:= -b_\theta(\psi,\dt e_u^{I,k}) + (s_{\theta p}\dt e_p^{I,k},\psi)_\Omega + (s_{\theta\theta}\dt e_\theta^{I,k},\psi)_\Omega + D_\theta^k(\psi),
\end{align*}
and the diffusion time residuals are
\begin{align*}
    D_p^k(q) &:= \frac{1}{\Delta t}\int_{t_{k-1}}^{t_k} a_p(p(s)-p^k,q)\,ds, \qquad
    D_\theta^k(\psi) := \frac{1}{\Delta t}\int_{t_{k-1}}^{t_k} a_\theta(\theta(s)-\theta^k,\psi)\,ds.
\end{align*}

We first eliminate the displacement derivative $\dt e_u^{h,k}$ from \eqref{eq:err-1}. Let $\boldsymbol{u}_{F,h}^k\in\boldsymbol{V}_h$ solve $a_{\boldsymbol{u}}(\boldsymbol{u}_{F,h}^k,\boldsymbol{v})=F_u^k(\boldsymbol{v})$ for all $\boldsymbol{v}\in\boldsymbol{V}_h$. By coercivity of $a_{\boldsymbol{u}}$, $\|\boldsymbol{u}_{F,h}^k\|_{\boldsymbol{V}} \le C\|F_u^k\|_{\boldsymbol{V}_h'}$. Comparing \eqref{eq:err-1} with the definition of the discrete elastic lift \eqref{eq:disc-elastic-lift} (applied to $y_h=(\dt e_p^{h,k},\dt e_\theta^{h,k})$), we have
\[
	\dt e_u^{h,k} = \boldsymbol{u}_{F,h}^k + \boldsymbol{u}_{(\dt e_p^{h,k},\dt e_\theta^{h,k}),h}, \qquad \eta^k:=(e_p^{h,k},e_\theta^{h,k})\in Q_h\times\Psi_h.
\]
%
Substituting into \eqref{eq:err-2}--\eqref{eq:err-3} and using $b_p(q,\boldsymbol{u}_{y,h})+b_\theta(\psi,\boldsymbol{u}_{y,h}) = -a_{\boldsymbol{u}}(\boldsymbol{u}_{y,h},\boldsymbol{u}_{z,h})$ with $z=(q,\psi) \in Q_h \times \Psi_h$ from \eqref{eq:disc-elastic-lift}, eliminates $\dt e_u^{h,k}$ entirely and gives
\begin{equation}
\label{eq:disc-eta-error-eq}
	m_{{\rm eff},h}(\dt\eta^k,z_h) + a_p(e_p^{h,k},q) + a_\theta(e_\theta^{h,k},\psi) = \widetilde{F}^k(z_h) \qquad \forall z_h=(q,\psi)\in Q_h\times\Psi_h,
\end{equation}
where $\widetilde{F}^k(z_h) := -F_p^k(q) - F_\theta^k(\psi) + b_p(q,u_{F,h}^k) + b_\theta(\psi,u_{F,h}^k)$. Taking $z_h=\eta^k$ in \eqref{eq:disc-eta-error-eq} and using the symmetric positive definiteness (via Lemma~\ref{lem:disc-effective-coercivity}) of $m_{{\rm eff},h}$,
\begin{align*}
	m_{{\rm eff},h}(\dt\eta^k,\eta^k) &= \frac{1}{2\Delta t}\Bigl(\|\eta^k\|_{m_{{\rm eff},h}}^2 - \|\eta^{k-1}\|_{m_{{\rm eff},h}}^2 + \|\eta^k-\eta^{k-1}\|_{m_{{\rm eff},h}}^2\Bigr) 
    \\
    &\ge \frac{1}{2\Delta t}\Bigl(\|\eta^k\|_{m_{{\rm eff},h}}^2 - \|\eta^{k-1}\|_{m_{{\rm eff},h}}^2\Bigr),
\end{align*}
where $\|\eta\|_{m_{{\rm eff},h}}^2 := m_{{\rm eff},h}(\eta,\eta)$, so that
\[
	\frac{1}{2\Delta t}\bigl(\|\eta^k\|_{m_{{\rm eff},h}}^2 - \|\eta^{k-1}\|_{m_{{\rm eff},h}}^2\bigr) + a_p(e_p^{h,k},e_p^{h,k}) + a_\theta(e_\theta^{h,k},e_\theta^{h,k}) \le \widetilde{F}^k(\eta^k).
\]
By Young's inequality and coercivity of $a_p,a_\theta$, $\widetilde{F}^k(\eta^k) \le \tfrac12\bigl(a_p(e_p^{h,k},e_p^{h,k})+a_\theta(e_\theta^{h,k},e_\theta^{h,k})\bigr) + C\|\widetilde{F}^k\|_{X_h'}^2$, where $\|\widetilde{F}^k\|_{X_h'} := \sup_{0\ne z_h\in Q_h\times\Psi_h} |\widetilde{F}^k(z_h)|/\|z_h\|_{Q\times\Psi}$. Multiplying by $2\Delta t$ and summing from $k=1$ to $m\le N$,
\[
	\|\eta^m\|_{m_{{\rm eff},h}}^2 + \Delta t\sum_{k=1}^m\bigl(\|e_p^{h,k}\|_{a_p}^2+\|e_\theta^{h,k}\|_{a_\theta}^2\bigr) \le \|\eta^0\|_{m_{{\rm eff},h}}^2 + C\Delta t\sum_{k=1}^m \|\widetilde{F}^k\|_{X_h'}^2.
\]
By Lemma~\ref{lem:disc-effective-coercivity} and boundedness of $m_{{\rm eff},h}$,
\begin{equation}
\label{eq:e-energy-3field}
\begin{aligned}
	&\max_{1\le k\le N}\bigl(\|e_p^{h,k}\|_{L^2}^2+\|e_\theta^{h,k}\|_{L^2}^2\bigr) + \Delta t\sum_{k=1}^N\bigl(\|e_p^{h,k}\|_{a_p}^2+\|e_\theta^{h,k}\|_{a_\theta}^2\bigr)
	\\
	&\le C\bigl(\|e_p^{h,0}\|_{L^2}^2+\|e_\theta^{h,0}\|_{L^2}^2\bigr) + C\Delta t\sum_{k=1}^N\|\widetilde{F}^k\|_{X_h'}^2.
\end{aligned}
\end{equation}
Since $\|u_{F,h}^k\|_{\boldsymbol{V}}\le C\|F_u^k\|_{\boldsymbol{V}_h'}$, we have $\|\widetilde{F}^k\|_{X_h'} \le C\bigl(\|F_p^k\|_{a_p,*}+\|F_\theta^k\|_{a_\theta,*}+\|F_u^k\|_{\boldsymbol{V}_h'}\bigr)$, so \eqref{eq:e-energy-3field} gives the pressure--temperature stability estimate
\begin{equation}
\label{eq:eta-final-stability}
\begin{aligned}
	&\max_{1\le k\le N}\bigl(\|e_p^{h,k}\|_{L^2}+\|e_\theta^{h,k}\|_{L^2}\bigr) + \Bigl(\Delta t\sum_{k=1}^N\bigl(\|e_p^{h,k}\|_{a_p}^2+\|e_\theta^{h,k}\|_{a_\theta}^2\bigr)\Bigr)^{1/2}
	\\
	&\le C\bigl(\|e_p^{h,0}\|_{L^2}+\|e_\theta^{h,0}\|_{L^2}\bigr) + C\Bigl(\Delta t\sum_{k=1}^N\bigl(\|F_u^k\|_{\boldsymbol{V}_h'}^2+\|F_p^k\|_{a_p,*}^2+\|F_\theta^k\|_{a_\theta,*}^2\bigr)\Bigr)^{1/2}.
\end{aligned}
\end{equation}
%

Summing the reconstruction formula $\dt e_u^{h,k} = u_{F,h}^k + \boldsymbol{u}_{\dt\eta^k,h}$ from $j=1$ to $k$ and using linearity of the discrete elastic lift, which gives the telescoping identity $\Delta t\sum_{j=1}^k \boldsymbol{u}_{\dt\eta^j,h} = \boldsymbol{u}_{\eta^k,h}-\boldsymbol{u}_{\eta^0,h}$, we obtain
\[
	e_u^{h,k} = e_u^{h,0} + \boldsymbol{u}_{\eta^k,h} - \boldsymbol{u}_{\eta^0,h} + \Delta t\sum_{j=1}^k u_{F,h}^j.
\]
By the discrete analogue of \eqref{eq:elastic-lift-bound} (coercivity of $a_{\boldsymbol{u}}$ applied to \eqref{eq:disc-elastic-lift}), $\|\boldsymbol{u}_{\eta^k,h}\|_{\boldsymbol{V}} \le C(\|e_p^{h,k}\|_{L^2}+\|e_\theta^{h,k}\|_{L^2})$, and similarly for $\boldsymbol{u}_{\eta^0,h}$; by the Cauchy--Schwarz inequality in time, $\Delta t\sum_{j=1}^k\|u_{F,h}^j\|_{\boldsymbol{V}} \le C\sqrt{t_k}\bigl(\Delta t\sum_{j=1}^N\|F_u^j\|_{\boldsymbol{V}_h'}^2\bigr)^{1/2} \le C_T\bigl(\Delta t\sum_{j=1}^N\|F_u^j\|_{\boldsymbol{V}_h'}^2\bigr)^{1/2}$. Hence, 
\begin{multline*}
    \max_{1\le k\le N}\|e_u^{h,k}\|_{\boldsymbol{V}}
	\le C_T\Bigl(\|e_u^{h,0}\|_{\boldsymbol{V}} + \|e_p^{h,0}\|_{L^2} + \|e_\theta^{h,0}\|_{L^2} + \max_{1\le k\le N}\bigl(\|e_p^{h,k}\|_{L^2}+\|e_\theta^{h,k}\|_{L^2}\bigr) 
    \\
    + \Bigl(\Delta t\sum_{k=1}^N\|F_u^k\|_{\boldsymbol{V}_h'}^2\Bigr)^{1/2}\Bigr).
\end{multline*}
Combining this with \eqref{eq:eta-final-stability} gives
\begin{equation}
\label{eq:e-final-stability}
\begin{aligned}
&\max_{1\le k\le N}\Bigl(\|e_u^{h,k}\|_{a_{\boldsymbol{u}}} + \|e_p^{h,k}\|_{L^2} + \|e_\theta^{h,k}\|_{L^2}\Bigr)
+ \Bigl(\Delta t\sum_{k=1}^N\bigl(\|e_p^{h,k}\|_{a_p}^2 + \|e_\theta^{h,k}\|_{a_\theta}^2\bigr)\Bigr)^{1/2}
\\
&\le C\bigl(\|e_u^{h,0}\|_{a_{\boldsymbol{u}}} + \|e_p^{h,0}\|_{L^2} + \|e_\theta^{h,0}\|_{L^2}\bigr)
+ C\Bigl(\Delta t\sum_{k=1}^N\bigl(\|F_u^k\|_{\boldsymbol{V}_h'}^2 + \|F_p^k\|^2_{a_p,*} + \|F_\theta^k\|^2_{a_\theta,*}\bigr)\Bigr)^{1/2}.
\end{aligned}
\end{equation}

By \eqref{eq:initial-data-assumption}, \eqref{eq:elliptic-projection-approx1}, \eqref{eq:elliptic-projection-approx2},
\[
    \|e_u^{h,0}\|_{a_{\boldsymbol{u}}} + \|e_p^{h,0}\|_{L^2} + \|e_\theta^{h,0}\|_{L^2} \le Ch^s\bigl(\|\boldsymbol{u}(0)\|_{H^{1+s}} + \|p(0)\|_{H^{1+s}} + \|\theta(0)\|_{H^{1+s}}\bigr).
\]
By continuity of $b_p$, $b_\theta$ and Korn's inequality,
\[
    \|F_u^k\|_{\boldsymbol{V}_h'} \le C\bigl(\|\dt e_p^{I,k}\|_{L^2} + \|\dt e_\theta^{I,k}\|_{L^2}\bigr).
\]
Similarly, using the $a_p$- and $a_\theta$-dual norms,
\[
    \|F_p^k\|_{a_p,*} \le C\bigl(\|\dt e_u^{I,k}\|_{H^1} + \|\dt e_p^{I,k}\|_{L^2} + \|\dt e_\theta^{I,k}\|_{L^2} + \|D_p^k\|_{a_p,*}\bigr),
\]
\[
    \|F_\theta^k\|_{a_\theta,*} \le C\bigl(\|\dt e_u^{I,k}\|_{H^1} + \|\dt e_p^{I,k}\|_{L^2} + \|\dt e_\theta^{I,k}\|_{L^2} + \|D_\theta^k\|_{a_\theta,*}\bigr).
\]
Standard estimates using \eqref{eq:elliptic-projection-approx1}, \eqref{eq:elliptic-projection-approx2}, and the definitions of $e_u^{I,k}$, $e_p^{I,k}$, $e_\theta^{I,k}$ give
\begin{multline*}
    \Delta t\sum_{k=1}^N\bigl(\|\dt e_u^{I,k}\|_{H^1}^2 + \|\dt e_p^{I,k}\|_{L^2}^2 + \|\dt e_\theta^{I,k}\|_{L^2}^2\bigr)
    \\
    \le C h^{2s}\bigl(\|\dot{\boldsymbol{u}}\|_{L^2(0,T;H^{1+s})}^2 + \|\dot{p}\|_{L^2(0,T;H^{1+s})}^2 + \|\dot{\theta}\|_{L^2(0,T;H^{1+s})}^2\bigr).
\end{multline*}
For the diffusion residuals,
\[
    \Delta t\sum_{k=1}^N\|D_p^k\|_{a_p,*}^2 \le C(\Delta t)^2\|\dot{p}\|_{L^2(0,T;H^1)}^2, \qquad
    \Delta t\sum_{k=1}^N\|D_\theta^k\|_{a_\theta,*}^2 \le C(\Delta t)^2\|\dot{\theta}\|_{L^2(0,T;H^1)}^2.
\]

For the final estimate, combining the above with \eqref{eq:e-final-stability} and the triangle inequality $\|e_u^k\|_{H^1} \le \|e_u^{I,k}\|_{H^1} + \|e_u^{h,k}\|_{H^1}$, and analogously for $e_p^k$ and $e_\theta^k$, we obtain \eqref{eq:forward-aux-estimate}. The space-time $L^2$ errors are bounded by introducing piecewise-constant-in-time interpolants $(\boldsymbol{u}_\tau, p_\tau, \theta_\tau)$ and applying the same interpolation estimates together with \eqref{eq:e-final-stability} and \eqref{eq:e-energy-3field}.
This completes the proof.
\end{proof}


\begin{lemma}
\label{lem:adjoint-error-fixed-control}
Let $(\bar{m}_p,\bar{m}_\theta)\in \mathcal{M}_{ad}$ be fixed. Let $(\boldsymbol{u},p,\theta)$ be the
corresponding continuous state, $(\boldsymbol{w},r,\phi)$ the continuous adjoint, $(\boldsymbol{u}_h,p_h,\theta_h)$ the fully discrete state, and $(\boldsymbol{w}_h,r_h,\phi_h)$ the corresponding discrete adjoint generated by the same controls.

Assume that
\[
 r, \phi \in H^1(0,T;H^{1+s}(\Omega)),
 \quad 
 \boldsymbol{w}\in H^1(0,T;\boldsymbol{H}^{1+s}(\Omega)),
\]
for some $0<s\le 1$. Then
\begin{multline}
    \|r_h-r\|_{L^2(0,T;L^2(\Omega))} + \|\phi_h-\phi\|_{L^2(0,T;L^2(\Omega))}
    \\
    \le C\left( h^s+\Delta t + \|\boldsymbol{u}_h-\boldsymbol{u}\|_{L^2(0,T;L^2(\Omega))} +\|p_h-p\|_{L^2(0,T;L^2(\Omega))} + \|\theta_h-\theta\|_{L^2(0,T;L^2(\Omega))}\right).
\end{multline}
Consequently, if the fixed-control state error satisfies
\[
\|\boldsymbol{u}_h-\boldsymbol{u}\|_{L^2(0,T;L^2(\Omega))}
+ \|p_h-p\|_{L^2(0,T;L^2(\Omega))}
+ \|\theta_h-\theta\|_{L^2(0,T;L^2(\Omega))} \le C(h^s+\Delta t),
\]
then
\[
\|r_h-r\|_{L^2(0,T;L^2(\Omega))} + \|\phi_h-\phi\|_{L^2(0,T;L^2(\Omega))} \le C(h^s+\Delta t).
\]
\end{lemma}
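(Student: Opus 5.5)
The plan is to mirror the proof of Lemma~\ref{lemma:aux-forward-error} for the backward problem, after a reversal of time. We split the adjoint error by introducing an intermediate discrete adjoint $(\hat{\boldsymbol{w}}_h,\hat r_h,\hat\phi_h)\in\boldsymbol{V}_{hk}\times Q_{hk}\times\Psi_{hk}$. It solves the discrete adjoint scheme \eqref{eq:fullydiscreteadj-eqs11} with zero terminal data, but its right-hand sides are the \emph{continuous} tracking residuals $\alpha_{\boldsymbol{u},C}(\boldsymbol{u}-\boldsymbol{u}_C)$, $\alpha_{p,C}(p-p_C)$, $\alpha_{\theta,C}(\theta-\theta_C)$ in place of the discrete ones. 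The triangle inequality then gives
\[
\|r_h-r\|+\|\phi_h-\phi\| \le \|r_h-\hat r_h\|+\|\phi_h-\hat\phi_h\| + \|\hat r_h-r\|+\|\hat\phi_h-\phi\|,
\]
with all norms in $L^2(0,T;L^2(\Omega))$. The first group is a pure discrete stability term, and the second is a consistency (approximation) term.

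\textbf{Stability term.} The difference $(\boldsymbol{w}_h-\hat{\boldsymbol{w}}_h, r_h-\hat r_h,\phi_h-\hat\phi_h)$ solves the backward dG(0) scheme with zero terminal data and data $\alpha_{\boldsymbol{u},C}(\boldsymbol{u}_h-\boldsymbol{u})$, $\alpha_{p,C}(p_h-p)$, $\alpha_{\theta,C}(\theta_h-\theta)$.
\begin{enumerate}[(i)]
\item Relabel $k\mapsto N-k$ so that the scheme becomes a forward backward-Euler scheme of the form \eqref{eq:fullydiscrete-eqs}. The only differences are the sign convention and the fact that the diffusion acts on the new level $r_h^k,\phi_h^k$, which is exactly the implicit level after reversal.
\item Eliminate $\dt\boldsymbol{w}_h$ via the discrete elastic lift \eqref{eq:disc-elastic-lift}, obtaining an equation of the type \eqref{eq:disc-eta-error-eq} in $m_{{\rm eff},h}$. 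The mechanical datum enters through a lift $\boldsymbol{u}_{F,h}$, as in that proof.
\item Test with the adjoint pressure/temperature and use Lemma~\ref{lem:disc-effective-coercivity}, the telescoping identity, and Young's inequality.
\end{enumerate}
This yields a bound like \eqref{eq:eta-final-stability}. The data are measured in $L^2$, which embeds into $X_h'$ and $\boldsymbol{V}_h'$. Hence
\[
\|r_h-\hat r_h\|_{L^2(0,T;L^2)}+\|\phi_h-\hat\phi_h\|_{L^2(0,T;L^2)} \le C\bigl(\|\boldsymbol{u}_h-\boldsymbol{u}\|+\|p_h-p\|+\|\theta_h-\theta\|\bigr).
\]
The space-time norm is bounded by $\sqrt T$ times the max-in-time $L^2$ norm. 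The state differences are piecewise constant in time against continuous functions, so the $\Delta t$-weighted sums are simply the space-time $L^2$ norms.

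\textbf{Consistency term.} Here $(\hat{\boldsymbol{w}}_h,\hat r_h,\hat\phi_h)$ is the dG(0) approximation of the continuous adjoint \eqref{eq:new-weakadj}, which has the same data. After time reversal $s=T-t$, this is exactly the situation of Lemma~\ref{lemma:aux-forward-error}. The roles are: $(\boldsymbol{w},r,\phi)$ plays $(\boldsymbol{u},p,\theta)$; the residuals play $(\boldsymbol{f},m_p,m_\theta)$; and the initial data is the terminal value zero, which is matched exactly, so that term vanishes.

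We split into elliptic projections $\Pi_h^{\boldsymbol{u}}\boldsymbol{w},\Pi_h^p r,\Pi_h^\theta\phi$ and discrete parts, and derive error equations identical to \eqref{eq:err-1}--\eqref{eq:err-3} up to sign. The residuals are bounded using the assumed $H^1(0,T;H^{1+s})$ regularity of $(\boldsymbol{w},r,\phi)$. This gives the max-in-time $L^2$ error at the nodes $t_k$, $O(h^s+\Delta t)$.

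\textbf{Main obstacle: the index convention.} The discrete adjoint on $I_k$ is represented by the left value $r_h^k$, while the continuous $r$ varies over $I_k$. The space-time error therefore contains an extra term $\sum_k\|r-r^k\|^2_{L^2(I_k;L^2)}$. We bound it by $C\Delta t^2\|\dot r\|^2_{L^2(0,T;L^2)}$, using $r\in H^1(0,T;L^2)$, and likewise for $\phi$.

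We must also check that the reversed adjoint scheme pairs $\dt r_h^{k+1}$ with diffusion at level $k$ consistently. The consistency residual $D_p$ then compares $r(s)$ with $r^k=r(t_k)$ on $(t_k,t_{k+1}]$. It is again $O(\Delta t)$ in the $a_p$-dual norm by the same estimate as in Lemma~\ref{lemma:aux-forward-error}.

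Collecting the stability and consistency bounds gives the first inequality. The second assertion follows immediately by inserting the assumed state error bound.
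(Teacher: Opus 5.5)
Your proposal is correct and follows essentially the same route as the paper. The paper also introduces an auxiliary discrete adjoint driven by the exact state. It handles the stability part by time reversal and the effective-mass stability estimate, bounding the residuals $\frac{1}{\Delta t}(\varphi,\cdot)_{I_k\times\Omega}$ by Cauchy--Schwarz in time. It handles the consistency part by a backward-in-time version of Lemma~\ref{lemma:aux-forward-error} whose initial-error term vanishes. The paper leaves two steps implicit that you spell out: the extra $O(\Delta t\,\|\dot r\|_{L^2(0,T;L^2(\Omega))})$ term coming from representing the adjoint on $I_k$ by the left value $r_h^k$, and the $\sqrt{T}$ passage from nodal maxima to space-time norms.
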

\begin{proof}
We introduce the auxiliary fully discrete adjoint $(\widetilde{\boldsymbol{w}}_h,\widetilde{r}_h,\widetilde{\phi}_h)$ driven by the exact continuous state $(\boldsymbol{u},p,\theta)$: its right-hand sides are $\alpha_{\boldsymbol{u},C}(\boldsymbol{u}-\boldsymbol{u}_C)$, $\alpha_{p,C}(p-p_C)$, $\alpha_{\theta,C}(\theta-\theta_C)$, instead of the discrete state values. Decompose
\[
r_h-r = (r_h-\widetilde{r}_h) + (\widetilde{r}_h-r), \qquad \phi_h-\phi = (\phi_h-\widetilde{\phi}_h) + (\widetilde{\phi}_h-\phi).
\]

\noindent\textbf{Estimate of $r_h-\widetilde{r}_h$ and $\phi_h-\widetilde{\phi}_h$.}
Setting $\varepsilon_w^k := \boldsymbol{w}_h^k-\widetilde{\boldsymbol{w}}_h^k$, $\varepsilon_r^k := r_h^k-\widetilde{r}_h^k$, $\varepsilon_\phi^k := \phi_h^k-\widetilde{\phi}_h^k$, subtracting the two discrete adjoint systems \eqref{eq:fullydiscreteadj-eqs11} gives, for $k=0,\ldots,N-1$, a three-field backward system with differences at level $k+1$ and diffusion terms at level $k$:
\begin{align}
\label{eq:adj-err-1}
-a_{\boldsymbol{u}}(\dt\varepsilon_w^{k+1},\boldsymbol{v}) - b_p(\dt\varepsilon_r^{k+1},\boldsymbol{v}) - b_\theta(\dt\varepsilon_\phi^{k+1},\boldsymbol{v})
&= \frac{\alpha_{\boldsymbol{u},C}}{\Delta t}(\boldsymbol{u}_h-\boldsymbol{u},\boldsymbol{v})_{I_k\times\Omega},
\\
\label{eq:adj-err-2}
-b_p(q,\dt\varepsilon_w^{k+1}) + (s_{pp}\dt\varepsilon_r^{k+1},q)_\Omega + (s_{p\theta}\dt\varepsilon_\phi^{k+1},q)_\Omega - a_p(\varepsilon_r^k,q)
&= \frac{\alpha_{p,C}}{\Delta t}(p_h-p,q)_{I_k\times\Omega},
\\
\label{eq:adj-err-3}
-b_\theta(\psi,\dt\varepsilon_w^{k+1}) + (s_{\theta p}\dt\varepsilon_r^{k+1},\psi)_\Omega + (s_{\theta\theta}\dt\varepsilon_\phi^{k+1},\psi)_\Omega - a_\theta(\varepsilon_\phi^k,\psi)
&= \frac{\alpha_{\theta,C}}{\Delta t}(\theta_h-\theta,\psi)_{I_k\times\Omega}.
\end{align}
Define $\widehat{\varepsilon}_w^j:=\varepsilon_w^{N-j}$, $\widehat{\varepsilon}_r^j:=\varepsilon_r^{N-j}$, $\widehat{\varepsilon}_\phi^j:=\varepsilon_\phi^{N-j}$ for $j=0,\ldots,N$; the terminal condition at $k=N$ becomes the initial condition $(\widehat{\varepsilon}_w^0,\widehat{\varepsilon}_r^0,\widehat{\varepsilon}_\phi^0)=(0,0,0)$. For the equation on $I_k$, set $j=N-k$, so $k=N-j$ and $k+1=N-(j-1)$; then
\[
	\varepsilon^k = \widehat{\varepsilon}^j, \qquad
	\dt\varepsilon^{k+1} = \frac{\varepsilon^{k+1}-\varepsilon^k}{\Delta t} = \frac{\widehat{\varepsilon}^{j-1}-\widehat{\varepsilon}^j}{\Delta t} = -\dt\widehat{\varepsilon}^j.
\]
Substituting these two relations into \eqref{eq:adj-err-1}--\eqref{eq:adj-err-3} converts the negative backward-difference terms into forward differences, 
while the diffusion terms and right-hand sides are unchanged except for the interval relabeling $I_k=I_{N-j}=:\widehat{I}_j$; the diffusion terms are evaluated at $\widehat{\varepsilon}^j$, so the reindexed system has the same left-hand side structure as \eqref{eq:err-1}--\eqref{eq:err-3}. This gives, for $j=1,\ldots,N$,
\begin{align*}
a_{\boldsymbol{u}}(\dt\widehat{\varepsilon}_w^j,\boldsymbol{v}) + b_p(\dt\widehat{\varepsilon}_r^j,\boldsymbol{v}) + b_\theta(\dt\widehat{\varepsilon}_\phi^j,\boldsymbol{v})
&= \frac{\alpha_{\boldsymbol{u},C}}{\Delta t}(\boldsymbol{u}_h-\boldsymbol{u},\boldsymbol{v})_{\widehat{I}_j\times\Omega},
\\
b_p(q,\dt\widehat{\varepsilon}_w^j) - (s_{pp}\dt\widehat{\varepsilon}_r^j,q)_\Omega - (s_{p\theta}\dt\widehat{\varepsilon}_\phi^j,q)_\Omega - a_p(\widehat{\varepsilon}_r^j,q)
&= \frac{\alpha_{p,C}}{\Delta t}(p_h-p,q)_{\widehat{I}_j\times\Omega},
\\
b_\theta(\psi,\dt\widehat{\varepsilon}_w^j) - (s_{\theta p}\dt\widehat{\varepsilon}_r^j,\psi)_\Omega - (s_{\theta\theta}\dt\widehat{\varepsilon}_\phi^j,\psi)_\Omega - a_\theta(\widehat{\varepsilon}_\phi^j,\psi)
&= \frac{\alpha_{\theta,C}}{\Delta t}(\theta_h-\theta,\psi)_{\widehat{I}_j\times\Omega},
\end{align*}
where $\widehat{I}_j$ denotes the time interval corresponding to $I_{N-j}$ under the reindexing. Writing the free index as $k$ again, since $j$ ranges over $1,\ldots,N$ exactly as $k$ does in \eqref{eq:err-1}--\eqref{eq:err-3}, this is exactly the forward error system \eqref{eq:err-1}--\eqref{eq:err-3}, with the state errors $\boldsymbol{u}_h-\boldsymbol{u}$, $p_h-p$, $\theta_h-\theta$ in place of the residuals $F_u^k,F_p^k,F_\theta^k$, and zero initial data $(\widehat{\varepsilon}_w^0,\widehat{\varepsilon}_r^0,\widehat{\varepsilon}_\phi^0)=(0,0,0)$. For the displacement residual we use the $\boldsymbol{V}_h'$-norm, while for the pressure and temperature residuals we use the $a_p$- and $a_\theta$-dual norms matching the norms in which $F_p^k,F_\theta^k$ are measured in \eqref{eq:e-final-stability}. In each case, since the residual has the form $\frac{1}{\Delta t}(\varphi,\cdot)_{\widehat{I}_j\times\Omega}$ for a fixed function $\varphi$, namely $\boldsymbol{u}_h-\boldsymbol{u}$, $p_h-p$, or $\theta_h-\theta$, the Cauchy--Schwarz inequality in time gives, with $\|\cdot\|_*$ denoting the corresponding dual norms $\boldsymbol{V}_h'$, $a_p$-dual, or $a_\theta$-dual,
\[
	\left\|\frac{1}{\Delta t}(\varphi,\cdot)_{\widehat{I}_j\times\Omega}\right\|_* \le \frac{1}{\Delta t}\|\varphi\|_{L^2(\widehat{I}_j;L^2(\Omega))}\cdot\sqrt{\Delta t} = \Delta t^{-1/2}\|\varphi\|_{L^2(\widehat{I}_j;L^2(\Omega))},
\]
so that
\[
	\Delta t\sum_{j=1}^N \left\|\frac{1}{\Delta t}(\varphi,\cdot)_{\widehat{I}_j\times\Omega}\right\|_*^2 \le C\sum_{j=1}^N \|\varphi\|_{L^2(\widehat{I}_j;L^2(\Omega))}^2 = C\|\varphi\|_{L^2(0,T;L^2(\Omega))}^2.
\]
Applying this scaling to each of the three residuals 
and then the three-field stability estimate \eqref{eq:e-final-stability} 
gives
\begin{multline}
    \|r_h-\widetilde{r}_h\|_{L^2(0,T;L^2(\Omega))} + \|\phi_h-\widetilde{\phi}_h\|_{L^2(0,T;L^2(\Omega))}
    \\
    \le C\bigl(\|\boldsymbol{u}_h-\boldsymbol{u}\|_{L^2L^2} + \|p_h-p\|_{L^2L^2} + \|\theta_h-\theta\|_{L^2L^2}\bigr).
\end{multline}

\noindent\textbf{Estimate of $\widetilde{r}_h-r$ and $\widetilde{\phi}_h-\phi$.}
$(\widetilde{\boldsymbol{w}}_h,\widetilde{r}_h,\widetilde{\phi}_h)$ is the fully discrete approximation of the continuous adjoint $(\boldsymbol{w},r,\phi)$ with exact state-data right-hand sides. The continuous and discrete adjoints both satisfy zero terminal data, so under the time reversal $s=T-t$ the terminal condition becomes a zero initial condition, and the analogue of the initial-error term $\|e^{h,0}\|$ in Lemma~\ref{lemma:aux-forward-error} vanishes identically. The dG(0) three-field error estimate (backward-in-time version of Lemma~\ref{lemma:aux-forward-error}, applied with this zero initial data) gives
\[
\|\widetilde{r}_h-r\|_{L^2(0,T;L^2(\Omega))} + \|\widetilde{\phi}_h-\phi\|_{L^2(0,T;L^2(\Omega))} \le C(h^s+\Delta t),
\]
under the stated regularity of $(\boldsymbol{w},r,\phi)$.

The triangle inequality and the two steps above yield the stated estimate. The final bound $C(h^s+\Delta t)$ follows from applying Lemma~\ref{lemma:aux-forward-error} to bound the state errors.
\end{proof}
\subsection{Estimates for variational discretization of controls}
This subsection derives error estimates for the control pair $(m_p,m_{\theta})$ within the variational discretization setting.
\begin{lemma}\label{Lemma:5.7}
    Recall the reduced discrete functional $j_h(m_p,m_\theta)$ defined above, which can be written explicitly as
\[
\begin{aligned}
        j_h(m_p,m_\theta)
        =
        &\frac{\alpha_{\boldsymbol{u},C}}{2}
        \|\boldsymbol{u}_h(m_p,m_\theta)-\boldsymbol{u}_C\|_{L^2(0,T; L^2(\Omega))}^{2}
        \\
        &+
        \frac{\alpha_{p,C}}{2}
        \|p_h(m_p,m_\theta)-p_C\|_{L^2(0,T; L^2(\Omega))}^{2}  
        \\
        &+
        \frac{\alpha_{\theta,C}}{2}
        \|\theta_h(m_p,m_\theta)-\theta_C\|_{L^2(0,T; L^2(\Omega))}^{2}
        \\
        &+
        \frac{\gamma_p}{2}
        \|m_p\|_{L^2(0,T; L^2(\Omega))}^{2}
        +
        \frac{\gamma_\theta}{2}
        \|m_\theta\|_{L^2(0,T; L^2(\Omega))}^{2} .
\end{aligned}
\]
    For $(m_{p,1},m_{\theta,1}), (m_{p,2},m_{\theta,2}) \in \mc{M}_{ad}$ the following estimate holds:
	\begin{align*}
		\bigl(j_{h}'(m_{p,1},m_{\theta,1}) - j_{h}'(m_{p,2},m_{\theta,2}),(m_{p,1}-m_{p,2},m_{\theta,1}-m_{\theta,2})\bigr)_{(0,T)\times \Omega} 
        \\
        \ge \gamma_p \|m_{p,1}-m_{p,2}\|_{L^2(0,T;L^2(\Omega))}^2 + \gamma_\theta\|m_{\theta,1}-m_{\theta,2}\|_{L^2(0,T;L^2(\Omega))}^2,
	\end{align*}
\end{lemma}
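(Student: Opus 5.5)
Since $j_h$ is a quadratic functional of the controls, we will compute its derivative through the discrete control-to-state map. Let $\mathcal{S}_h$ be the linear map sending $(m_p,m_\theta)$ to the solution of \eqref{eq:fullydiscrete-eqs} with zero initial data and $\boldsymbol{f}=0$. The discrete state then splits as
\[
(\boldsymbol{u}_h,p_h,\theta_h)(m_p,m_\theta)=\boldsymbol{y}_h^0+\mathcal{S}_h(m_p,m_\theta),
\]
where $\boldsymbol{y}_h^0$ is the discrete state for zero controls with the given initial data and body force. Well-posedness of this splitting comes from the discrete unique-solvability lemma. Substituting into the explicit formula for $j_h$ gives
\[
j_h(m)=\tfrac12\langle \mathcal{A}_h m,m\rangle+\ell_h(m)+\text{const},
\qquad
\mathcal{A}_h=\mathcal{S}_h^*\,\mathrm{diag}(\alpha_{\boldsymbol{u},C},\alpha_{p,C},\alpha_{\theta,C})\,\mathcal{S}_h+\mathrm{diag}(\gamma_p,\gamma_\theta).
\]
Here $m=(m_p,m_\theta)$, $\ell_h$ is a bounded linear functional, and the adjoint $\mathcal{S}_h^*$ is taken with respect to the $L^2(0,T;L^2(\Omega))$ inner products.

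The derivative is therefore
\[
j_h'(m)=\mathcal{A}_h m+\nabla\ell_h .
\]
For two controls $m_1,m_2$ the affine parts cancel, and with $\delta m:=m_1-m_2$ and $\delta\boldsymbol{y}_h:=\mathcal{S}_h\delta m=(\delta\boldsymbol{u}_h,\delta p_h,\delta\theta_h)$ we get
\begin{align*}
\bigl(j_h'(m_1)-j_h'(m_2),\delta m\bigr)
&=\alpha_{\boldsymbol{u},C}\|\delta\boldsymbol{u}_h\|_{L^2(0,T;L^2)}^2+\alpha_{p,C}\|\delta p_h\|_{L^2(0,T;L^2)}^2+\alpha_{\theta,C}\|\delta\theta_h\|_{L^2(0,T;L^2)}^2\\
&\quad+\gamma_p\|\delta m_p\|_{L^2(0,T;L^2)}^2+\gamma_\theta\|\delta m_\theta\|_{L^2(0,T;L^2)}^2 .
\end{align*}
The three tracking terms are nonnegative because the weights $\alpha_{\cdot,C}$ are nonnegative. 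Dropping them yields the stated inequality.

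Equivalently, one can write $j_h'(m)=(\gamma_p m_p+r_h,\ \gamma_\theta m_\theta+\phi_h)$ with the discrete adjoint \eqref{eq:fullydiscreteadj-eqs11}, read piecewise on $I_k$ as $r_h^k$ and $\phi_h^k$. In that form the cross term
\[
(r_{h,1}-r_{h,2},\delta m_p)+(\phi_{h,1}-\phi_{h,2},\delta m_\theta)
\]
has to be identified with the nonnegative tracking sum. To do this, we test the difference of the adjoint systems \eqref{eq:fullydiscreteadj-eqs11} with $(\delta\boldsymbol{u}_h^{k+1},\delta p_h^{k+1},\delta\theta_h^{k+1})$, test the difference of the state systems \eqref{eq:fullydiscrete-eqs} with $(\delta\boldsymbol{w}_h^k,\delta r_h^k,\delta\phi_h^k)$, and sum over $k$.

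The only real obstacle is this summation by parts. We must check that the index conventions make the telescoping exact: the state sits at level $k+1$ on $I_k$, the adjoint diffusion terms sit at level $k$, the state has zero initial data, and the adjoint has zero terminal data. I would rather avoid that bookkeeping, so I will use the reduced-quadratic argument above: the identity $\langle\mathcal{S}_h^*\mathcal{S}_h\delta m,\delta m\rangle=\|\mathcal{S}_h\delta m\|^2\ge0$ holds by definition of the Hilbert adjoint, and the identification with the discrete adjoint was already supplied by the discrete optimality theorem.
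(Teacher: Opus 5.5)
Your proposal is correct and is essentially the paper's argument. The paper also uses the affine dependence of the discrete state on $(m_p,m_\theta)$ to identify $\bigl(j_h'(m_1)-j_h'(m_2),\delta m\bigr)$ with the second variation $\alpha_{\boldsymbol{u},C}\|\delta\boldsymbol{u}_h\|^2+\alpha_{p,C}\|\delta p_h\|^2+\alpha_{\theta,C}\|\delta\theta_h\|^2+\gamma_p\|\delta m_p\|^2+\gamma_\theta\|\delta m_\theta\|^2$, then discards the nonnegative tracking terms; your operator $\mathcal{S}_h^*\,\mathrm{diag}(\alpha_{\boldsymbol{u},C},\alpha_{p,C},\alpha_{\theta,C})\,\mathcal{S}_h+\mathrm{diag}(\gamma_p,\gamma_\theta)$ just spells out that one-line step, and the adjoint-based reformulation in your last paragraph is not needed.
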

\begin{proof}
Since \eqref{eq:fullydiscrete-eqs} is linear in the controls, the discrete state depends affinely on $(m_p,m_\theta)$. Set $\delta m_p:=m_{p,1}-m_{p,2}$, $\delta m_\theta:=m_{\theta,1}-m_{\theta,2}$, $\delta\boldsymbol{u}_h:=\boldsymbol{u}_h(m_{p,1},m_{\theta,1})-\boldsymbol{u}_h(m_{p,2},m_{\theta,2})$, $\delta p_h:=p_h(m_{p,1},m_{\theta,1})-p_h(m_{p,2},m_{\theta,2})$, $\delta\theta_h:=\theta_h(m_{p,1},m_{\theta,1})-\theta_h(m_{p,2},m_{\theta,2})$. The second variation of $j_h$ gives
\begin{align*}
&\bigl(j_h'(m_{p,1},m_{\theta,1})-j_h'(m_{p,2},m_{\theta,2}),(\delta m_p,\delta m_\theta)\bigr)_{(0,T) \times \Omega}
\\
&=
\alpha_{\boldsymbol{u},C}\|\delta\boldsymbol{u}_h\|_{L^2(0,T;L^2(\Omega))}^{2}
+
\alpha_{p,C}\|\delta p_h\|_{L^2(0,T;L^2(\Omega))}^{2}
+
\alpha_{\theta,C}\|\delta\theta_h\|_{L^2(0,T;L^2(\Omega))}^{2}
\\
&\quad + \gamma_p\|\delta m_p\|_{L^2(0,T;L^2(\Omega))}^{2}
+
\gamma_\theta\|\delta m_\theta\|_{L^2(0,T;L^2(\Omega))}^{2}.
\end{align*}
Since all $\alpha$ coefficients are non-negative and $\gamma_p, \gamma_\theta > 0$, the result follows.
\end{proof}

\begin{theorem}\label{Theorem:5.8}
	Let $(\boldsymbol{u},p,\theta,\bar{m}_p,\bar{m}_\theta)$ and $(\boldsymbol{u}_h,p_h,\theta_h,m_{p,h},m_{\theta,h})$ denote the continuous and fully discrete optimal solutions. Suppose that the continuous state and adjoint satisfy
	\begin{align}\label{eq:adjoint-regularity}
		p, r, \theta, \phi \in H^1(0,T; H^{1+s}(\Omega)) ,
		\qquad
		\boldsymbol{u}, \boldsymbol{w} \in H^1(0,T; \boldsymbol{H}^{1+s}(\Omega)),
	\end{align}
    and the initial data assumption \eqref{eq:initial-data-assumption} holds. 
	Then, we have the estimates
    \begin{align*}
		\|\bar{m}_p-m_{p,h}\|_{L^2(0,T;L^2(\Omega))} + \|\bar{m}_\theta-m_{\theta,h}\|_{L^2(0,T;L^2(\Omega))} &\le C (\Delta t + h^s)
	\end{align*}
    with $C>0$ depending on the solution regularities in \eqref{eq:adjoint-regularity}.
\end{theorem}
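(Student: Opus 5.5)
The plan is the standard variational-discretization argument of Hinze type, built on the monotonicity estimate of Lemma~\ref{Lemma:5.7}. We test the continuous variational inequalities \eqref{ctsvi-g}--\eqref{ctsvi-m} with $(m_p,m_\theta)=(m_{p,h},m_{\theta,h})$. We test the discrete variational inequalities \eqref{eq:varineq-g}--\eqref{eq:varineq-m} with $(\tilde m_p,\tilde m_\theta)=(\bar m_p,\bar m_\theta)$. This is legitimate because, under variational discretization, both pairs lie in $\mathcal{M}_{ad}$.

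Adding the four inequalities gives
\[
\gamma_p\|\bar m_p-m_{p,h}\|^2+\gamma_\theta\|\bar m_\theta-m_{\theta,h}\|^2 \le (r_h(m_{p,h},m_{\theta,h})-r,\bar m_p-m_{p,h}) + (\phi_h(m_{p,h},m_{\theta,h})-\phi,\bar m_\theta-m_{\theta,h}).
\]
All norms and inner products here are in $L^2(0,T;L^2(\Omega))$. The discrete adjoint $r_h$ is read as the piecewise-in-time function equal to $r_h^k$ on $I_k$, and similarly for $\phi_h$.

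Next we insert the intermediate discrete adjoint $(r_h(\bar m_p,\bar m_\theta),\phi_h(\bar m_p,\bar m_\theta))$, i.e.\ the discrete state and adjoint generated by the \emph{continuous} optimal controls. This splits the right-hand side into two parts.

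\textbf{First part.} It contains $r_h(m_{p,h},m_{\theta,h})-r_h(\bar m_p,\bar m_\theta)$ paired with $\bar m_p-m_{p,h}$, plus the analogous $\phi$ term. The map from controls to discrete adjoint is affine, so this part equals $-\bigl(j_h'(\bar m)-j_h'(m_h)\bigr)(\bar m-m_h)$ minus the $\gamma$ terms. By the identity in the proof of Lemma~\ref{Lemma:5.7}, it equals
\[
-\alpha_{\boldsymbol u,C}\|\delta\boldsymbol u_h\|^2-\alpha_{p,C}\|\delta p_h\|^2-\alpha_{\theta,C}\|\delta\theta_h\|^2\le 0,
\]
so it can be dropped. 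One has to check carefully that the discrete Lagrangian identity pairs $r_h^k$ with the control on $I_k$ with the correct sign convention, so that the cross term truly equals the negative tracking norms. This is exactly what the discrete optimality system was derived to guarantee.

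\textbf{Second part.} It is bounded by Cauchy--Schwarz and Young:
\[
\bigl(\|r_h(\bar m_p,\bar m_\theta)-r\|+\|\phi_h(\bar m_p,\bar m_\theta)-\phi\|\bigr)\bigl(\|\bar m_p-m_{p,h}\|+\|\bar m_\theta-m_{\theta,h}\|\bigr).
\]
Half of the control terms are absorbed into the left-hand side, using $\gamma_p,\gamma_\theta>0$.

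It remains to bound the fixed-control adjoint error. By Lemma~\ref{lem:adjoint-error-fixed-control} with the controls fixed at $(\bar m_p,\bar m_\theta)$, it is at most $C(h^s+\Delta t)$ plus the fixed-control state errors in $L^2(0,T;L^2)$. Those state errors are bounded by $C(h^s+\Delta t)$ via Lemma~\ref{lemma:aux-forward-error}, under the regularity \eqref{eq:adjoint-regularity} and the initial data assumption \eqref{eq:initial-data-assumption}. Combining gives $\|\bar m_p-m_{p,h}\|+\|\bar m_\theta-m_{\theta,h}\|\le C(h^s+\Delta t)$.

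The main obstacle is the adjoint time-indexing. The discrete adjoint lives on $I_k$ at level $k$, while the continuous adjoint $r$ is a genuine function of time. Lemma~\ref{lem:adjoint-error-fixed-control} must therefore be interpreted with $r_h|_{I_k}=r_h^k$, and the $O(\Delta t)$ mismatch between $r(t)$ and its value at the interval endpoint has to be controlled. This mismatch is at most $\Delta t\|\dot r\|_{L^2(0,T;L^2)}$, which the assumed $H^1$-in-time regularity of $r$ and $\phi$ covers.
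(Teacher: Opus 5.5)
Your proposal is correct and is essentially the paper's proof. Both arguments reduce the problem to bounding $(r_h(\bar m_p,\bar m_\theta)-r,\bar m_p-m_{p,h})+(\phi_h(\bar m_p,\bar m_\theta)-\phi,\bar m_\theta-m_{\theta,h})$ using the same ingredients: the discrete variational inequalities tested with $(\bar m_p,\bar m_\theta)$, the continuous ones tested with $(m_{p,h},m_{\theta,h})$, and the monotonicity of Lemma~\ref{Lemma:5.7}. They then close with Cauchy--Schwarz, Lemma~\ref{lem:adjoint-error-fixed-control} and Lemma~\ref{lemma:aux-forward-error}. The only difference is the order of the algebra: you add the inequalities first and drop the cross term by monotonicity of the discrete control-to-adjoint map, whereas the paper starts from strong monotonicity of $j_h'$ and uses the inequalities to discard terms.
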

\begin{proof}
Let $(\bar{m}_p,\bar{m}_\theta)$ and $(m_{p,h},m_{\theta,h})$ denote the continuous and discrete optimal controls, respectively. We treat the two controls separately but use the same argument for each. For the fluid control, with the convention that $j'(\bar{m}_p,\bar{m}_\theta)_{m_p} = \gamma_p \bar{m}_p + r$ and $j_h'(m_{p,h},m_{\theta,h})_{m_p} = \gamma_p m_{p,h} + r_h(m_{p,h},m_{\theta,h})$, and similarly for the heat control:
\[
        j'(\bar{m}_p,\bar{m}_\theta)_{m_\theta}=\gamma_\theta \bar{m}_\theta+\phi,
        \qquad
        j_h'(m_{p,h},m_{\theta,h})_{m_\theta}=\gamma_\theta m_{\theta,h}+\phi_h(m_{p,h},m_{\theta,h}).
\]
Below, $r_h(\bar{m}_p,\bar{m}_\theta)$ and $\phi_h(\bar{m}_p,\bar{m}_\theta)$ denote the discrete adjoint variables computed from the discrete state generated by the \emph{continuous} optimal controls $(\bar{m}_p,\bar{m}_\theta)$ (as distinct from $r_h(m_{p,h},m_{\theta,h})$, $\phi_h(m_{p,h},m_{\theta,h})$ above, which are generated by the discrete optimal controls).

By strong monotonicity (Lemma~\ref{Lemma:5.7}), 
\begin{multline*}
    \gamma_p \|\bar{m}_p-m_{p,h}\|_{L^2(0,T;L^2(\Omega))}^2 + \gamma_\theta\|\bar{m}_\theta-m_{\theta,h}\|_{L^2(0,T;L^2(\Omega))}^2
    \\
    \le \bigl(j_h'(\bar{m}_p,\bar{m}_\theta)-j_h'(m_{p,h},m_{\theta,h}),(\bar{m}_p-m_{p,h},\bar{m}_\theta-m_{\theta,h})\bigr)_{(0,T) \times \Omega}.
\end{multline*}
Expanding and using the discrete variational inequalities \eqref{eq:varineq-g}--\eqref{eq:varineq-m} with $(\tilde{m}_p,\tilde{m}_\theta)=(\bar{m}_p,\bar{m}_\theta)$, and the continuous variational inequalities \eqref{ctsvi-g}--\eqref{ctsvi-m} with $(m_p,m_\theta)=(m_{p,h},m_{\theta,h})$, we obtain
\begin{multline*}
    \gamma_p \|\bar{m}_p-m_{p,h}\|_{L^2(0,T;L^2(\Omega))}^2 + \gamma_\theta\|\bar{m}_\theta-m_{\theta,h}\|_{L^2(0,T;L^2(\Omega))}^2
    \\
    \le \bigl(r_h(\bar{m}_p,\bar{m}_\theta)-r,\,\bar{m}_p-m_{p,h}\bigr)_{(0,T) \times \Omega} + \bigl(\phi_h(\bar{m}_p,\bar{m}_\theta)-\phi,\bar{m}_\theta-m_{\theta,h}\bigr)_{(0,T) \times \Omega}.
\end{multline*}
By the Cauchy--Schwarz inequality and Lemma~\ref{lem:adjoint-error-fixed-control},
\begin{multline*}
    \gamma_p \|\bar{m}_p-m_{p,h}\|_{L^2(0,T;L^2(\Omega))}^2 + \gamma_\theta\|\bar{m}_\theta-m_{\theta,h}\|_{L^2(0,T;L^2(\Omega))}^2
    \\
    \le C(h^s+\Delta t)(\|\bar{m}_p-m_{p,h}\|_{L^2(0,T;L^2(\Omega))} + \|\bar{m}_\theta-m_{\theta,h}\|_{L^2(0,T;L^2(\Omega))}).
\end{multline*}
Dividing by $(\|\bar{m}_p-m_{p,h}\|_{L^2(0,T;L^2(\Omega))}^2 + \|\bar{m}_\theta-m_{\theta,h}\|_{L^2(0,T;L^2(\Omega))}^2)^{1/2}$ (when nonzero) yields the result.
This completes the proof.
\end{proof}
\begin{theorem}\label{Theorem:5.9}
Let $(\boldsymbol u,p,\theta,\bar{m}_p,\bar{m}_\theta)$ be the continuous optimal solution and
let $(\boldsymbol{u}_h,p_h,\theta_h,m_{p,h},m_{\theta,h})$ be the fully discrete optimal solution. Under the
regularity assumptions of Lemma~\ref{lemma:aux-forward-error},
Lemma~\ref{lem:adjoint-error-fixed-control}, and
Theorem~\ref{Theorem:5.8}, there exists a constant $C>0$, independent of
$h$ and $\Delta t$, such that
\[
\begin{aligned}
&\max_{1\le k\le N}
\left(
\|\boldsymbol u^k-\boldsymbol{u}_h^k\|_{H^1(\Omega)}
+
\|p^k-p_h^k\|_{L^2(\Omega)}
+
\|\theta^k-\theta_h^k\|_{L^2(\Omega)}
\right)
\\
&\qquad
+
\|\boldsymbol{u}-\boldsymbol{u}_h\|_{L^2(0,T;H^1(\Omega))}
+
\|p-p_h\|_{L^2(0,T;L^2(\Omega))}
+
\|\theta-\theta_h\|_{L^2(0,T;L^2(\Omega))}
\\
&\qquad \le
C(h^s+\Delta t).
\end{aligned}
\]
In particular, if $s=1$, then
\[
\begin{aligned}
&\max_{1\le k\le N}
\left(
\|\boldsymbol u^k-\boldsymbol{u}_h^k\|_{H^1(\Omega)}
+
\|p^k-p_h^k\|_{L^2(\Omega)}
+
\|\theta^k-\theta_h^k\|_{L^2(\Omega)}
\right)
\\
&\qquad
+
\|\boldsymbol{u}-\boldsymbol{u}_h\|_{L^2(0,T;H^1(\Omega))}
+
\|p-p_h\|_{L^2(0,T;L^2(\Omega))}
+
\|\theta-\theta_h\|_{L^2(0,T;L^2(\Omega))}
\\
&\qquad \le
C(h+\Delta t).
\end{aligned}
\]
\end{theorem}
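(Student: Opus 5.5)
The plan is to split the state error with the intermediate discrete state driven by the continuous optimal controls. Denote by $(\boldsymbol{u}_h(\bar m_p,\bar m_\theta),p_h(\bar m_p,\bar m_\theta),\theta_h(\bar m_p,\bar m_\theta))$ the solution of \eqref{eq:fullydiscrete-eqs} with controls $(\bar m_p,\bar m_\theta)$. The fully discrete optimal state is $(\boldsymbol{u}_h,p_h,\theta_h)=(\boldsymbol{u}_h(m_{p,h},m_{\theta,h}),p_h(m_{p,h},m_{\theta,h}),\theta_h(m_{p,h},m_{\theta,h}))$, and both use the same initial data $(\boldsymbol{u}_h^0,p_h^0,\theta_h^0)$. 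By the triangle inequality,
\[
\boldsymbol{u}-\boldsymbol{u}_h = \bigl(\boldsymbol{u}-\boldsymbol{u}_h(\bar m_p,\bar m_\theta)\bigr) + \bigl(\boldsymbol{u}_h(\bar m_p,\bar m_\theta)-\boldsymbol{u}_h(m_{p,h},m_{\theta,h})\bigr),
\]
and analogously for $p$ and $\theta$. The first difference is exactly the auxiliary forward error of Lemma~\ref{lemma:aux-forward-error}. Under the stated regularity and \eqref{eq:initial-data-assumption}, it is bounded in all the norms on the left-hand side by $C(h^s+\Delta t)$.

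For the second difference, I use that \eqref{eq:fullydiscrete-eqs} is linear. The difference $(\delta\boldsymbol{u}_h,\delta p_h,\delta\theta_h)$ solves \eqref{eq:fullydiscrete-eqs} with zero initial data, zero body force, and controls $(\bar m_p-m_{p,h},\bar m_\theta-m_{\theta,h})$. I then need a discrete stability estimate for this system, which I obtain from the machinery of Lemma~\ref{lemma:aux-forward-error}. Running the same elimination of $\dt\delta\boldsymbol{u}_h^{k}$ via the discrete elastic lift \eqref{eq:disc-elastic-lift} yields an equation of the form \eqref{eq:disc-eta-error-eq}. Here $F_u^k=0$, and $F_p^k,F_\theta^k$ are replaced by $\frac{1}{\Delta t}(\bar m_p-m_{p,h},\cdot)_{I_{k-1}\times\Omega}$ and its $\theta$ analogue. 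The stability bound \eqref{eq:e-final-stability} with zero initial data then applies. Measuring these residuals in the $a_p$- and $a_\theta$-dual norms, and using Poincaré together with the Cauchy--Schwarz scaling in time already used in the proof of Lemma~\ref{lem:adjoint-error-fixed-control}, I obtain
\[
\max_{k}\bigl(\|\delta\boldsymbol{u}_h^k\|_{H^1}+\|\delta p_h^k\|_{L^2}+\|\delta\theta_h^k\|_{L^2}\bigr) \le C\bigl(\|\bar m_p-m_{p,h}\|_{L^2(0,T;L^2)}+\|\bar m_\theta-m_{\theta,h}\|_{L^2(0,T;L^2)}\bigr).
\]
Theorem~\ref{Theorem:5.8} bounds the right-hand side by $C(h^s+\Delta t)$.

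The space-time $L^2$ norms of $(\delta\boldsymbol{u}_h,\delta p_h,\delta\theta_h)$ are piecewise constant in time. They are therefore bounded by $\sqrt{T}$ times the nodal maxima, and for the displacement the $L^2(0,T;H^1)$ norm is handled the same way. Adding the two contributions gives the general estimate, and setting $s=1$ gives the stated special case.

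The main obstacle is the displacement part of the discrete stability bound. It requires the reconstruction $\delta\boldsymbol{u}_h^k=\boldsymbol{u}_{\eta^k,h}+\Delta t\sum_j \boldsymbol{u}_{F,h}^j$. In this setting $\boldsymbol{u}_{F,h}^j=0$, because the control enters only the flow and heat equations. So I expect this step to reduce cleanly to the lift bound and the pressure/temperature stability, uniformly in $h$ thanks to Lemma~\ref{lem:disc-effective-coercivity}.
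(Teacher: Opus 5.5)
Your proposal is correct and follows essentially the same route as the paper. You split off the discrete state driven by the continuous optimal controls $(\bar m_p,\bar m_\theta)$, bound the fixed-control part by Lemma~\ref{lemma:aux-forward-error}, and bound the control-perturbation part by the effective-mass stability estimate \eqref{eq:e-final-stability} (zero initial data, $F_u^k=0$, Cauchy--Schwarz-in-time scaling of the control residuals), then invoke Theorem~\ref{Theorem:5.8}; your bound of the space-time norms of the piecewise-constant perturbation by $\sqrt{T}$ times the nodal maxima simply makes explicit a step the paper leaves implicit.
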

\begin{proof}
Let $(\boldsymbol{u}_h(\bar{m}_p,\bar{m}_\theta), p_h(\bar{m}_p,\bar{m}_\theta), \theta_h(\bar{m}_p,\bar{m}_\theta))$ denote the fully discrete state generated by the continuous optimal controls $(\bar{m}_p,\bar{m}_\theta)$. Decompose:
\begin{align*}
\boldsymbol{u}^k - \boldsymbol{u}_h^k &= (\boldsymbol{u}^k - \boldsymbol{u}_h^k(\bar{m}_p,\bar{m}_\theta)) + (\boldsymbol{u}_h^k(\bar{m}_p,\bar{m}_\theta) - \boldsymbol{u}_h^k), \\
p^k - p_h^k &= (p^k - p_h^k(\bar{m}_p,\bar{m}_\theta)) + (p_h^k(\bar{m}_p,\bar{m}_\theta) - p_h^k), \\
\theta^k - \theta_h^k &= (\theta^k - \theta_h^k(\bar{m}_p,\bar{m}_\theta)) + (\theta_h^k(\bar{m}_p,\bar{m}_\theta) - \theta_h^k).
\end{align*}
The first terms are fixed-control discretization errors bounded by Lemma~\ref{lemma:aux-forward-error}:
\begin{multline*}
    \max_{1\le k\le N}\bigl(\|e_u^k\|_{H^1} + \|e_p^k\|_{L^2} + \|e_\theta^k\|_{L^2}\bigr)
    \\ 
    + \|e_u\|_{L^2(0,T;H^1(\Omega))} + \|e_p\|_{L^2(0,T;L^2(\Omega))} + \|e_\theta\|_{L^2(0,T;L^2(\Omega))} \le C(h^s+\Delta t).
\end{multline*}

For the perturbation errors, set $\delta m_p := \bar{m}_p-m_{p,h}$, $\delta m_\theta := \bar{m}_\theta-m_{\theta,h}$, and
\begin{align*}
    \zeta_u^k &:= \boldsymbol{u}_h^k(\bar{m}_p,\bar{m}_\theta) - \boldsymbol{u}_h^k(m_{p,h},m_{\theta,h}), \quad
    \zeta_p^k := p_h^k(\bar{m}_p,\bar{m}_\theta) - p_h^k(m_{p,h},m_{\theta,h}), 
    \\
    \zeta_\theta^k &:= \theta_h^k(\bar{m}_p,\bar{m}_\theta) - \theta_h^k(m_{p,h},m_{\theta,h}).    
\end{align*}
Subtracting the discrete state equations \eqref{eq:fullydiscrete-eqs} for $(\bar{m}_p,\bar{m}_\theta)$ from those for $(m_{p,h},m_{\theta,h})$ gives, for $k=0,\ldots,N-1$ and all $(\boldsymbol{v},q,\psi)\in\boldsymbol{V}_h\times Q_h\times\Psi_h$ (matching the index convention of \eqref{eq:fullydiscrete-eqs} itself, with differences at level $k+1$ and diffusion terms at level $k+1$, since both discrete states satisfy the same dG(0) scheme),
\begin{align*}
a_{\boldsymbol{u}}(\dt\zeta_u^{k+1},\boldsymbol{v}) + b_p(\dt\zeta_p^{k+1},\boldsymbol{v}) + b_\theta(\dt\zeta_\theta^{k+1},\boldsymbol{v}) &= 0, \\
b_p(q,\dt\zeta_u^{k+1}) - (s_{pp}\dt\zeta_p^{k+1},q)_\Omega - (s_{p\theta}\dt\zeta_\theta^{k+1},q)_\Omega - a_p(\zeta_p^{k+1},q) &= \tfrac{1}{\Delta t}(\delta m_p, q)_{I_k\times\Omega}, \\
b_\theta(\psi,\dt\zeta_u^{k+1}) - (s_{\theta p}\dt\zeta_p^{k+1},\psi)_\Omega - (s_{\theta\theta}\dt\zeta_\theta^{k+1},\psi)_\Omega - a_\theta(\zeta_\theta^{k+1},\psi) &= \tfrac{1}{\Delta t}(\delta m_\theta, \psi)_{I_k\times\Omega},
\end{align*}
with $(\zeta_u^0,\zeta_p^0,\zeta_\theta^0) = (0,0,0)$ (since both discrete states share the same numerical initial data $(\boldsymbol{u}_h^0,p_h^0,\theta_h^0)$). This is exactly the fixed-control error system \eqref{eq:err-1}--\eqref{eq:err-3} after the same relabeling $k\to k+1$ used to pass from \eqref{eq:fullydiscrete-eqs} to \eqref{eq:err-1}--\eqref{eq:err-3} (equivalently, writing the free index as $k$ again and reading $\zeta^k$ in place of $e^{h,k}$), with residuals $\frac{1}{\Delta t}(\delta m_p,\cdot)_{I_k\times\Omega}$ and $\frac{1}{\Delta t}(\delta m_\theta,\cdot)_{I_k\times\Omega}$ in place of $F_p^k,F_\theta^k$, and zero residual in place of $F_u^k$. By the same Cauchy--Schwarz-in-time argument used in the proof of Lemma~\ref{lem:adjoint-error-fixed-control}, their dual norms satisfy $\Delta t\sum_k\|\Delta t^{-1}(\delta m_p,\cdot)_{I_k\times\Omega}\|_{a_p,*}^2 \le C\|\delta m_p\|_{L^2L^2}^2$ and $\Delta t\sum_k\|\Delta t^{-1}(\delta m_\theta,\cdot)_{I_k\times\Omega}\|_{a_\theta,*}^2 \le C\|\delta m_\theta\|_{L^2L^2}^2$. Applying the same effective-mass stability estimate \eqref{eq:e-final-stability} used for the fixed-control forward error, with residuals bounded as above by $\|\delta m_p\|_{L^2L^2}$ and $\|\delta m_\theta\|_{L^2L^2}$, gives
\begin{multline*}
    \max_{k}\bigl(\|\zeta_u^k\|_{H^1} + \|\zeta_p^k\|_{L^2} + \|\zeta_\theta^k\|_{L^2}\bigr)
+ \|\zeta_u\|_{L^2H^1} + \|\zeta_p\|_{L^2L^2} + \|\zeta_\theta\|_{L^2L^2} 
    \\ 
    \le C\bigl(\|\delta m_p\|_{L^2L^2} + \|\delta m_\theta\|_{L^2L^2}\bigr).
\end{multline*}
By Theorem~\ref{Theorem:5.8}, $\|\delta m_p\|_{L^2L^2} + \|\delta m_\theta\|_{L^2L^2} \le C(h^s+\Delta t)$. The result follows by the triangle inequality.
\end{proof}


\section{Numerical experiments}\label{Numerical Experiments}

The numerical experiments in this section validate the three-field thermo-poroelastic optimality system with two distributed controls.  We use the same finite element spaces as in Section~\ref{discrete formulation}: the lowest order Taylor--Hood pair $[\mathbb P_2]^2\times\mathbb P_1$ for $(\boldsymbol u,p)$ and a continuous $\mathbb P_1$ space for $\theta$.  The controls are treated by variational discretization and are evaluated through the projection formulas involving the discrete adjoint variables $r_h^k$ and $\phi_h^k$ on each time interval $I_k$.

\subsection*{Experiment 1: Full three-field manufactured convergence test}
On $\Omega=(0,1)^2$ and $T=1$, we use the boundary bubble and time factor
\[
    B(x,y)=x^3(1-x)^3y^3(1-y)^3,\qquad
    \eta(t)=t^2(T-t)^3.
\]
For the mechanical boundary partition we take
\[
    \Gamma_d=\{0\}\times[0,1], \qquad \Gamma_t=\partial\Omega\setminus\Gamma_d,
\]
so that both $|\Gamma_d|>0$ and $|\Gamma_t|>0$.  For the pressure and temperature variables we take $\Gamma_p=\Gamma_\theta=\partial\Omega$.  Since $B=0$ and $\nabla B=0$ on $\partial\Omega$, the manufactured fields below satisfy the homogeneous pressure and temperature boundary conditions, the displacement condition on $\Gamma_d$, and the homogeneous traction condition on $\Gamma_t$.

The exact state is chosen as
\[
    p^*(x,y,t)=B(x,y)\eta(t), \qquad
    \theta^*(x,y,t)=B(x,y)(1+x+y)\eta(t),
\]
\[
    \boldsymbol{u}^*(x,y,t) = B(x,y)\eta(t)
    \begin{pmatrix}
    1+x \\
    1+y
    \end{pmatrix}.
\]
The body force is manufactured from the mechanical equation only.  With $\boldsymbol{\sigma}(\boldsymbol v)=2\mu\epsilon(\boldsymbol v)+\lambda(\operatorname{div}\boldsymbol v)\boldsymbol I$, we set
\[
\boldsymbol f^*
=-\operatorname{div}\boldsymbol{\sigma}(\partial_t\boldsymbol u^*)
  +\alpha_p\nabla \partial_t p^*
  +\alpha_\theta\nabla \partial_t\theta^*.
\]
No additional scalar manufactured sources are introduced.  Instead, the exact controls are defined by the scalar state residuals,
\[
\begin{aligned}
m_p^*&=-\alpha_p\operatorname{div}(\partial_t\boldsymbol u^*)
       -s_{pp}\partial_t p^*-s_{p\theta}\partial_t\theta^*
       +\operatorname{div}(\kappa_p\nabla p^*),\\
m_\theta^*&=-\alpha_\theta\operatorname{div}(\partial_t\boldsymbol u^*)
       -s_{\theta p}\partial_t p^*-s_{\theta\theta}\partial_t\theta^*
       +\operatorname{div}(\kappa_\theta\nabla\theta^*).
\end{aligned}
\]
Thus the scalar equations have exactly the same right-hand sides as the OCP analyzed in the paper,
\[
\begin{aligned}
s_{pp}\partial_t p^*+\alpha_p\operatorname{div}(\partial_t\boldsymbol u^*)
  +s_{p\theta}\partial_t\theta^* -\operatorname{div}(\kappa_p\nabla p^*) &=-m_p^*,\\
s_{\theta\theta}\partial_t\theta^*+\alpha_\theta\operatorname{div}(\partial_t\boldsymbol u^*)
  +s_{\theta p}\partial_t p^* -\operatorname{div}(\kappa_\theta\nabla\theta^*) &=-m_\theta^*.
\end{aligned}
\]
For the optimality-system test, the adjoint variables are constructed from the inactive optimality condition.  We choose
\[
    \boldsymbol{w}^*(x,y,t)=(T-t)^2B(x,y)
    \begin{pmatrix}
    1+x \\
    1+y
    \end{pmatrix},
\]
then define the scalar adjoints by
\[
    r^*(x,y,t)=-\gamma_p m_p^*(x,y,t),\qquad
    \phi^*(x,y,t)=-\gamma_\theta m_\theta^*(x,y,t).
\]
Because $\eta(T)=\eta'(T)=0$ and the cubic bubble has vanishing second traces on $\partial\Omega$, these adjoint fields satisfy the homogeneous terminal and scalar boundary conditions.  By construction, the unconstrained optimality equations
\[
    \gamma_p m_p^*+r^*=0,\qquad
    \gamma_\theta m_\theta^*+\phi^*=0
\]
are satisfied exactly.  If box constraints are imposed, the same formula is interpreted through pointwise projection,
\[
    m_p^*=\mathcal P_{[m_{p,a},m_{p,b}]}
       \!\left(-\gamma_p^{-1}r^*\right),\qquad
    m_\theta^*=\mathcal P_{[m_{\theta,a},m_{\theta,b}]}
       \!\left(-\gamma_\theta^{-1}\phi^*\right).
\]
For the manufactured convergence tests, the bounds are chosen sufficiently wide so that the exact controls are inactive. Hence the projection formula
reduces to
\[
m_p^*=-\gamma_p^{-1}r^*,\qquad
m_\theta^*=-\gamma_\theta^{-1}\phi^*.
\]
The active-constraint experiment below is used only to visualize the
pointwise projection mechanism and is not interpreted as a manufactured convergence test.

In the implementation the controls are not assigned independent finite element spaces.  They are evaluated from the discrete projection formula using the computed adjoint variables,
\[
    m_{p,h}|_{I_k}=\mathcal P_{\mathcal M_{ad}^p}\!\left(-\gamma_p^{-1}r_h^k\right),
    \qquad
    m_{\theta,h}|_{I_k}=\mathcal P_{\mathcal M_{ad}^\theta}\!\left(-\gamma_\theta^{-1}\phi_h^k\right),
\]
which is the variational-discretization realization of \eqref{proj}.

The tracking targets are also manufactured from the continuous adjoint residual.  Writing the tracking weights as $\omega_u,\omega_p,\omega_\theta$ (all equal to one in the reported runs), define residuals $\boldsymbol R_u^*,R_p^*,R_\theta^*$ by
\[
\begin{aligned}
\boldsymbol R_u^*
&=-(-\operatorname{div}\boldsymbol{\sigma}(\partial_t\boldsymbol w^*)
  +\alpha_p\nabla\partial_t r^*
  +\alpha_\theta\nabla\partial_t\phi^*),
  \\
R_p^*
&=\alpha_p\operatorname{div}(\partial_t\boldsymbol w^*)
  +s_{pp}\partial_t r^*+s_{p\theta}\partial_t\phi^*
  -\operatorname{div}(\kappa_p\nabla r^*),
  \\
R_\theta^*
&=\alpha_\theta\operatorname{div}(\partial_t\boldsymbol w^*)
  +s_{\theta p}\partial_t r^*+s_{\theta\theta}\partial_t\phi^*
  -\operatorname{div}(\kappa_\theta\nabla\phi^*).
\end{aligned}
\]
The desired targets are then chosen as
\[
    \boldsymbol u_C=\boldsymbol u^* -\omega_u^{-1}\boldsymbol R_u^*,\qquad
    p_C=p^* -\omega_p^{-1}R_p^*,\qquad
    \theta_C=\theta^* -\omega_\theta^{-1}R_\theta^*.
\]
With these definitions, the tuple
\[
(\boldsymbol u^*,p^*,\theta^*,\boldsymbol w^*,r^*,\phi^*,m_p^*,m_\theta^*)
\]
satisfies the continuous state equation, adjoint equation, and unconstrained first-order optimality condition of the same OCP as in the analysis.  The numerical test therefore measures the discretization error of the fully discrete variational-discretization scheme without adding control-independent scalar source terms.

Table~\ref{tab:3field-mms-conv} reports relative space-time errors for spatial refinement with the default Taylor--Hood-type spaces $[\mathbb P_2]^2\times\mathbb P_1\times\mathbb P_1$ for $(\boldsymbol u,p,\theta)$.  We use the continuous manufactured targets, Gaussian quadrature in time, and $\Delta t=1/4096$ so that the temporal error is small compared with the spatial error.  The rates are computed by halving $h$.  All variables show approximately second-order spatial convergence except that the finest-mesh rate for $\boldsymbol w$ remains more sensitive to the time discretization.

\begin{table}[htbp]
\centering
\scriptsize
\caption{Spatial refinement for the boundary-bubble manufactured optimality system with $\Delta t=1/4096$, continuous manufactured targets, Gaussian time quadrature, and $[\mathbb P_2]^2\times\mathbb P_1\times\mathbb P_1$ elements. Entries are relative errors; numbers in parentheses are rates from the previous mesh.}
\label{tab:3field-mms-conv}
\begin{tabular}{c|cccc}
\hline
$N$ & $\boldsymbol u$ & $p$ & $\theta$ & $\boldsymbol w$ \\
\hline
4  & $1.542\times10^{-1}$ (~~--~~) & $2.099\times10^{-1}$ (~~--~~) & $2.125\times10^{-1}$ (~~--~~) & $1.551\times10^{-1}$ (~~--~~) \\
8  & $4.297\times10^{-2}$ (1.84) & $6.374\times10^{-2}$ (1.72) & $6.499\times10^{-2}$ (1.71) & $4.327\times10^{-2}$ (1.84) \\
16 & $1.095\times10^{-2}$ (1.97) & $1.683\times10^{-2}$ (1.92) & $1.720\times10^{-2}$ (1.92) & $1.105\times10^{-2}$ (1.97) \\
32 & $2.754\times10^{-3}$ (1.99) & $4.276\times10^{-3}$ (1.98) & $4.365\times10^{-3}$ (1.98) & $2.840\times10^{-3}$ (1.96) \\
64 & $7.221\times10^{-4}$ (1.93) & $1.085\times10^{-3}$ (1.98) & $1.098\times10^{-3}$ (1.99) & $9.478\times10^{-4}$ (1.58) \\
\hline\hline
$N$ & $r$ & $\phi$ & $m_p$ & $m_\theta$ \\
\hline
4  & $1.944\times10^{-1}$ (~~--~~) & $1.988\times10^{-1}$ (~~--~~) & $1.932\times10^{-1}$ (~~--~~) & $1.976\times10^{-1}$ (~~--~~) \\
8  & $5.815\times10^{-2}$ (1.74) & $5.999\times10^{-2}$ (1.73) & $5.813\times10^{-2}$ (1.73) & $5.997\times10^{-2}$ (1.72) \\
16 & $1.513\times10^{-2}$ (1.94) & $1.567\times10^{-2}$ (1.94) & $1.513\times10^{-2}$ (1.94) & $1.567\times10^{-2}$ (1.94) \\
32 & $3.707\times10^{-3}$ (2.03) & $3.845\times10^{-3}$ (2.03) & $3.707\times10^{-3}$ (2.03) & $3.845\times10^{-3}$ (2.03) \\
64 & $8.695\times10^{-4}$ (2.09) & $8.911\times10^{-4}$ (2.11) & $8.695\times10^{-4}$ (2.09) & $8.911\times10^{-4}$ (2.11) \\
\hline
\end{tabular}
\end{table}

Table~\ref{tab:3field-mms-dt} reports temporal refinement for the same $[\mathbb P_2]^2\times\mathbb P_1\times\mathbb P_1$ elements on a fixed $128\times128$ mesh, using the continuous adjoint target construction.  The displacement, adjoints, and controls show first-order temporal convergence.  The pressure and temperature rates improve relative to coarser meshes, but the finest refinements still show some remaining spatial-error influence, especially for $\theta$.

\begin{table}[htbp]
\centering
\scriptsize
\caption{Temporal refinement for the boundary-bubble manufactured optimality system with fixed $N=128$ and $[\mathbb P_2]^2\times\mathbb P_1\times\mathbb P_1$ elements. Entries are relative errors; numbers in parentheses are rates from the previous time step.}
\label{tab:3field-mms-dt}
\begin{tabular}{c|cccc}
\hline
$n$ & $\boldsymbol u$ & $p$ & $\theta$ & $\boldsymbol w$ \\
\hline
16  & $1.325\times10^{-2}$ (~~--~~) & $4.023\times10^{-3}$ (~~--~~) & $2.164\times10^{-3}$ (~~--~~) & $7.746\times10^{-2}$ (~~--~~) \\
32  & $6.451\times10^{-3}$ (1.04) & $2.131\times10^{-3}$ (0.92) & $1.170\times10^{-3}$ (0.89) & $3.930\times10^{-2}$ (0.98) \\
64  & $3.184\times10^{-3}$ (1.02) & $1.162\times10^{-3}$ (0.87) & $6.870\times10^{-4}$ (0.77) & $1.980\times10^{-2}$ (0.99) \\
128 & $1.588\times10^{-3}$ (1.00) & $6.819\times10^{-4}$ (0.77) & $4.598\times10^{-4}$ (0.58) & $9.939\times10^{-3}$ (0.99) \\
\hline\hline
$n$ & $r$ & $\phi$ & $m_p$ & $m_\theta$ \\
\hline
16  & $1.569\times10^{-2}$ (~~--~~) & $1.028\times10^{-2}$ (~~--~~) & $1.569\times10^{-2}$ (~~--~~) & $1.028\times10^{-2}$ (~~--~~) \\
32  & $8.131\times10^{-3}$ (0.95) & $5.308\times10^{-3}$ (0.95) & $8.131\times10^{-3}$ (0.95) & $5.308\times10^{-3}$ (0.95) \\
64  & $4.115\times10^{-3}$ (0.98) & $2.673\times10^{-3}$ (0.99) & $4.115\times10^{-3}$ (0.98) & $2.673\times10^{-3}$ (0.99) \\
128 & $2.048\times10^{-3}$ (1.01) & $1.321\times10^{-3}$ (1.02) & $2.048\times10^{-3}$ (1.01) & $1.321\times10^{-3}$ (1.02) \\
\hline
\end{tabular}
\end{table}

To check whether the suboptimal scalar rates in Table~\ref{tab:3field-mms-dt} are caused by the fixed spatial error, we repeat the temporal study using the one-order-higher Taylor--Hood-type triple $[\mathbb P_3]^2\times\mathbb P_2\times\mathbb P_2$.  The same test with $N=128$ was not practical with the direct solver, so Table~\ref{tab:3field-mms-dt-high-order} reports the $N=64$ computation.  With the higher-order spatial spaces, the scalar variables also recover clean first-order temporal convergence.

\begin{table}[htbp]
\centering
\scriptsize
\caption{Temporal refinement for the boundary-bubble manufactured optimality system with fixed $N=64$ and $[\mathbb P_3]^2\times\mathbb P_2\times\mathbb P_2$ elements. Entries are relative errors; numbers in parentheses are rates from the previous time step.}
\label{tab:3field-mms-dt-high-order}
\begin{tabular}{c|cccc}
\hline
$n$ & $\boldsymbol u$ & $p$ & $\theta$ & $\boldsymbol w$ \\
\hline
16  & $1.325\times10^{-2}$ (~~--~~) & $3.876\times10^{-3}$ (~~--~~) & $2.002\times10^{-3}$ (~~--~~) & $7.746\times10^{-2}$ (~~--~~) \\
32  & $6.448\times10^{-3}$ (1.04) & $1.978\times10^{-3}$ (0.97) & $9.974\times10^{-4}$ (1.01) & $3.930\times10^{-2}$ (0.98) \\
64  & $3.179\times10^{-3}$ (1.02) & $9.998\times10^{-4}$ (0.99) & $4.981\times10^{-4}$ (1.00) & $1.980\times10^{-2}$ (0.99) \\
128 & $1.578\times10^{-3}$ (1.01) & $5.027\times10^{-4}$ (0.99) & $2.490\times10^{-4}$ (1.00) & $9.939\times10^{-3}$ (0.99) \\
\hline\hline
$n$ & $r$ & $\phi$ & $m_p$ & $m_\theta$ \\
\hline
16  & $1.578\times10^{-2}$ (~~--~~) & $1.037\times10^{-2}$ (~~--~~) & $1.578\times10^{-2}$ (~~--~~) & $1.037\times10^{-2}$ (~~--~~) \\
32  & $8.216\times10^{-3}$ (0.94) & $5.398\times10^{-3}$ (0.94) & $8.216\times10^{-3}$ (0.94) & $5.398\times10^{-3}$ (0.94) \\
64  & $4.195\times10^{-3}$ (0.97) & $2.756\times10^{-3}$ (0.97) & $4.195\times10^{-3}$ (0.97) & $2.756\times10^{-3}$ (0.97) \\
128 & $2.120\times10^{-3}$ (0.99) & $1.393\times10^{-3}$ (0.99) & $2.120\times10^{-3}$ (0.99) & $1.393\times10^{-3}$ (0.99) \\
\hline
\end{tabular}
\end{table}

\subsection*{Experiment 2: Degenerate storage matrix test}
Experiment 2 uses exactly the same manufactured solution fields
\[
(\boldsymbol u^*,p^*,\theta^*,\boldsymbol w^*,r^*,\phi^*,m_p^*,m_\theta^*)
\]
as Experiment 1.  The only change is the storage matrix $\boldsymbol S$ appearing in the scalar residual controls $m_p^*,m_\theta^*$ and in the adjoint residuals $R_p^*,R_\theta^*$.  Thus, for each storage choice below, the controls and tracking targets are regenerated by the residual formulas above using that storage matrix.  This is important: the same closed-form state is imposed, while the scalar adjoints, controls, and targets are storage-dependent so that the continuous optimality system remains exactly satisfied.

To illustrate the storage-degenerate regime covered by the effective-storage framework, we repeat the manufactured convergence test for three storage matrices:
\[
\boldsymbol S_{\rm SPD}=\begin{pmatrix}1&0.2\\0.2&1\end{pmatrix},\qquad
\boldsymbol S_{s_{pp}=0}=\begin{pmatrix}0&0\\0&1\end{pmatrix},\qquad
\boldsymbol S_{\rm rank}=c^\perp(c^\perp)^\top.
\]
The corresponding effective-storage constants $(c^\perp)^\top\boldsymbol S c^\perp$ are $1.6$, $1.0$, and $4.0$, respectively.  We use the same continuous manufactured targets and Gaussian time quadrature as in Table~\ref{tab:3field-mms-conv}, with $\Delta t=1/4096$ and meshes up to $64\times64$.  Table~\ref{tab:3field-storage-degenerate} reports the errors on the finest mesh, with the rate from the $32\times32$ mesh in parentheses.  The storage-degenerate cases produce essentially the same errors and rates as the positive definite case.

\begin{table}[htbp]
\centering
\scriptsize
\caption{Finest-mesh errors for positive definite and storage-degenerate matrices with $N=64$, $\Delta t=1/4096$, continuous manufactured targets, and Gaussian time quadrature. Entries are relative errors; numbers in parentheses are rates from $N=32$ to $N=64$.}
\label{tab:3field-storage-degenerate}
\begin{tabular}{cc|cccc}
\hline
storage & $(c^\perp)^\top\boldsymbol S c^\perp$ & $\boldsymbol u$ & $p$ & $\theta$ & $\boldsymbol w$ \\
\hline
SPD & $1.6$ & $7.221\times10^{-4}$ (1.93) & $1.085\times10^{-3}$ (1.98) & $1.098\times10^{-3}$ (1.99) & $9.478\times10^{-4}$ (1.58) \\
$s_{pp}=0$ & $1.0$ & $7.222\times10^{-4}$ (1.93) & $1.104\times10^{-3}$ (1.98) & $1.101\times10^{-3}$ (1.99) & $9.478\times10^{-4}$ (1.58) \\
rank-one & $4.0$ & $7.221\times10^{-4}$ (1.93) & $1.037\times10^{-3}$ (1.99) & $1.113\times10^{-3}$ (1.99) & $9.479\times10^{-4}$ (1.58) \\
\hline\hline
storage & $(c^\perp)^\top\boldsymbol S c^\perp$ & $r$ & $\phi$ & $m_p$ & $m_\theta$ \\
\hline
SPD & $1.6$ & $8.695\times10^{-4}$ (2.09) & $8.911\times10^{-4}$ (2.11) & $8.695\times10^{-4}$ (2.09) & $8.911\times10^{-4}$ (2.11) \\
$s_{pp}=0$ & $1.0$ & $9.002\times10^{-4}$ (2.12) & $8.933\times10^{-4}$ (2.11) & $9.002\times10^{-4}$ (2.12) & $8.933\times10^{-4}$ (2.11) \\
rank-one & $4.0$ & $9.306\times10^{-4}$ (2.14) & $9.062\times10^{-4}$ (2.12) & $9.306\times10^{-4}$ (2.14) & $9.062\times10^{-4}$ (2.12) \\
\hline
\end{tabular}
\end{table}

The only visibly lower finest-level rate in Table~\ref{tab:3field-storage-degenerate} is the rate for the adjoint displacement $\boldsymbol w$.  This reduction is not caused by storage degeneracy: the same $\boldsymbol w$ rate, about $1.58$, occurs for the positive definite storage matrix and in the baseline manufactured test.  The preceding refinement step $N=16$ to $N=32$ gives approximately second-order convergence for $\boldsymbol w$ in all three storage cases, so the final-rate reduction is interpreted as residual time-discretization sensitivity of the adjoint displacement at the finest mesh rather than a loss of stability from the degenerate storage matrix.



\subsection*{Experiment 3: Active box constraints}
To illustrate the control restriction visually with less monotone active-set geometry, we use a visualization-only trigonometric modulation of the unconstrained controls,
\[
    m_{p,\mathrm{unc}}^{\rm vis}
    = -\gamma_p^{-1}r_h\left(0.45+1.35\sin(2\pi x)\cos(2\pi y)\right),
\]
\[
    m_{\theta,\mathrm{unc}}^{\rm vis}
    = -\gamma_\theta^{-1}\phi_h\left(0.45+1.35\cos(2\pi x)\sin(2\pi y)\right).
\]
These controls are used only for the active-set visualization; the manufactured convergence tables above use the unmodified manufactured controls.  We impose the box constraints
\[
    -0.6\le m_p\le 0.6,\qquad -1\le m_\theta\le 1,
\]
and project the modulated unconstrained controls pointwise onto these intervals.  On a $32\times32$ mesh with $128$ time steps, the lower active fractions for $(m_p,m_\theta)$ are approximately $(28.47\%,30.85\%)$ at $t=0$, $(15.24\%,20.02\%)$ at $t=0.25$, and $(8.26\%,10.47\%)$ at $t=0.5$.  The corresponding upper active fractions are approximately $(6.52\%,6.89\%)$, $(4.59\%,4.96\%)$, and $(0\%,0.18\%)$.  Figure~\ref{fig:3field-active-controls} shows the more structured lower and upper active regions produced by the sign-changing trigonometric factors.  The constrained diagnostic errors are not intended as manufactured solutions convergence data; the purpose of this experiment is to visualize the pointwise projection mechanism in the variationally discretized control.

\begin{figure}[htbp]
\centering
\includegraphics[width=0.92\textwidth]{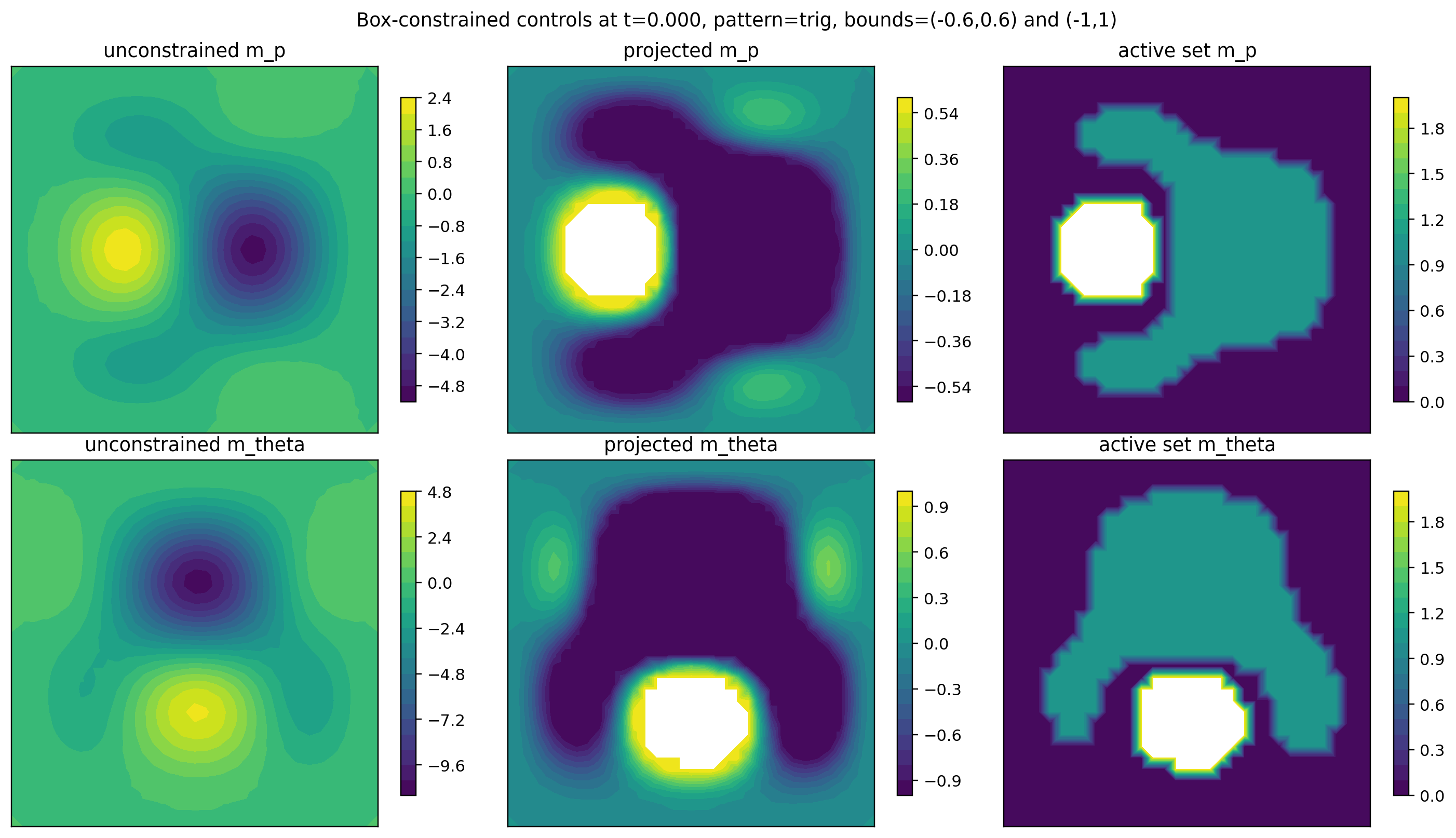}\\[1mm]
\includegraphics[width=0.92\textwidth]{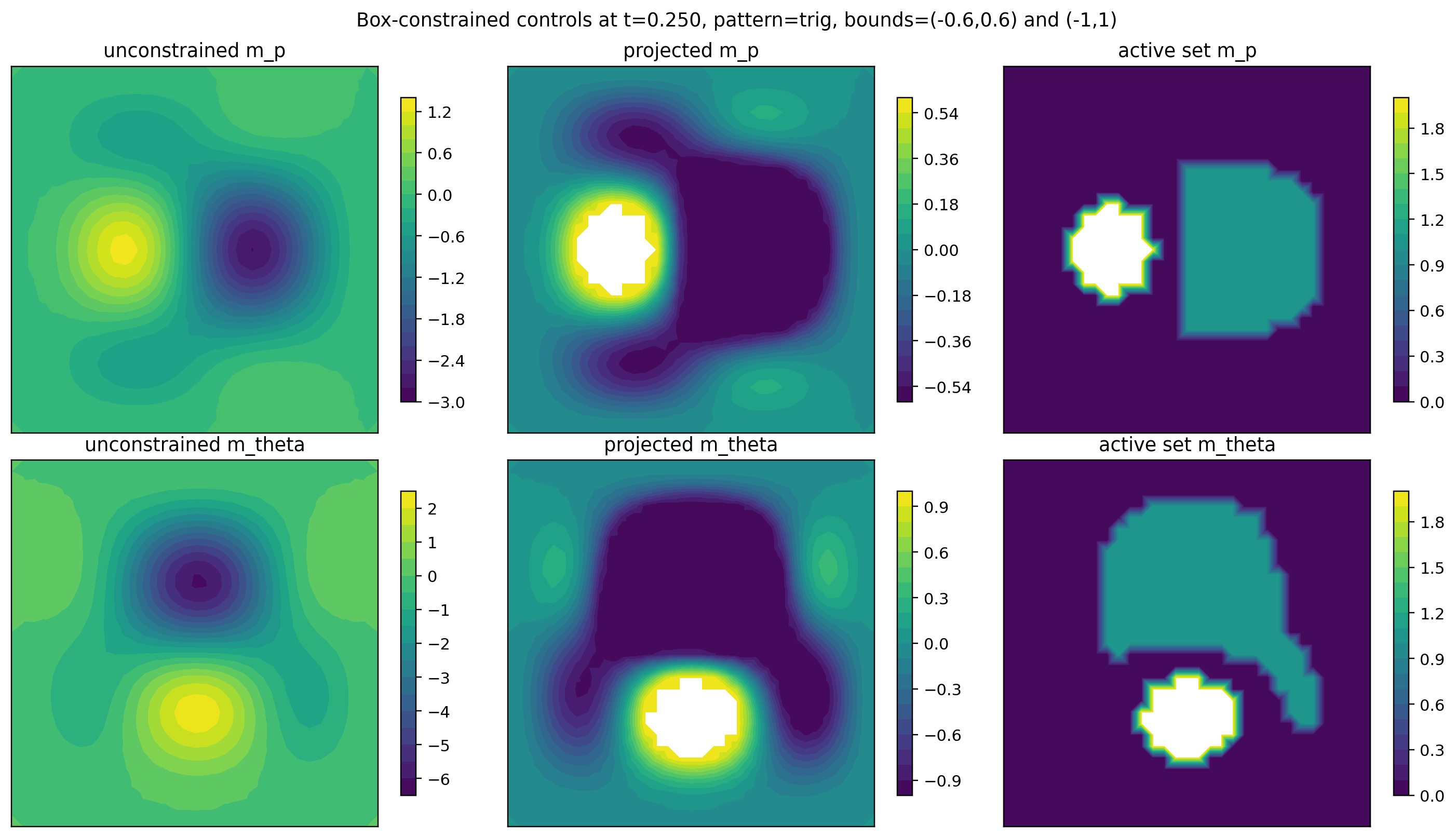}\\[1mm]
\includegraphics[width=0.92\textwidth]{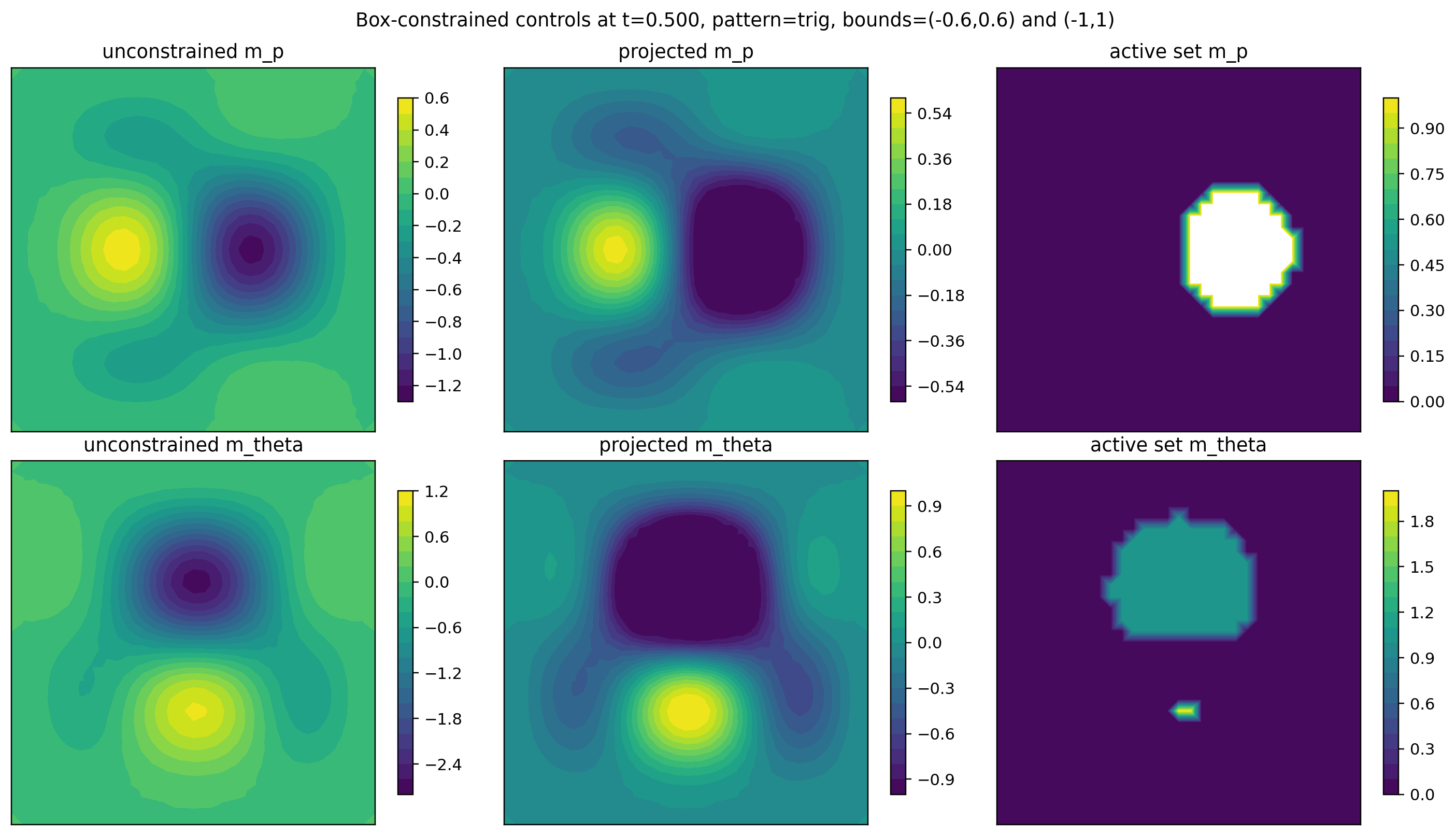}
\caption{Active box-constraint visualization on a $32\times32$ mesh with $128$ time steps at $t=0$, $t=0.25$, and $t=0.5$.  The panels show trigonometric unconstrained controls, projected controls, and active-set indicators for $m_p$ and $m_\theta$ under the bounds $-0.6\le m_p\le0.6$ and $-1\le m_\theta\le1$.}
\label{fig:3field-active-controls}
\end{figure}

\section{Conclusions}\label{sec:conclusions}
In this work, we introduced and analyzed a three-field symmetric differentiated formulation for distributed optimal control problems governed by linear thermo-poroelasticity. The state variables are the displacement $\boldsymbol{u}$, the fluid pressure $p$, and the temperature $\theta$, and the controls are the distributed fluid source $m_p$ and heat source $m_\theta$. We established well-posedness of the three-field state system, existence and uniqueness of an optimal control pair $(\bar{m}_p,\bar{m}_\theta)$, and derived the associated three-field adjoint optimality system. For a dG(0) discretization with variational discretization of both controls, we proved a priori error estimates of order $\mathcal{O}(h^s+\Delta t)$, under the stated regularity assumptions. The manufactured-solution tests in Section~\ref{Numerical Experiments} are consistent with the predicted first-order-in-time behavior of the dG(0) discretization and show higher-than-guaranteed spatial rates for the smooth manufactured solution. The storage-degenerate tests further support the effective storage framework by showing comparable convergence behavior when $\boldsymbol{S}$ is positive semidefinite rather than positive definite.

\section*{Acknowledgements}
The author used AI-based tools for language editing. The author assumes responsibility for all content.

\bibliographystyle{amsplain}
\providecommand{\bysame}{\leavevmode\hbox to3em{\hrulefill}\thinspace}
\providecommand{\MR}{\relax\ifhmode\unskip\space\fi MR }
\providecommand{\MRhref}[2]{%
  \href{http://www.ams.org/mathscinet-getitem?mr=#1}{#2}
}

\bibliography{references}

@article{biot1941general,
  AUTHOR = {Biot, Maurice A.},
  TITLE = {General theory of three-dimensional consolidation},
  JOURNAL = {J. Appl. Phys.},
  VOLUME = {12},
  YEAR = {1941},
  NUMBER = {2},
  PAGES = {155--164},
  MRNUMBER = {0004868},
  DOI = {10.1063/1.1712886},
  URL = {https://doi.org/10.1063/1.1712886}
}

@article{MR4410836,
  AUTHOR = {Bociu, Lorena and Strikwerda, Sarah},
  TITLE = {Optimal control in poroelasticity},
  JOURNAL = {Appl. Anal.},
  VOLUME = {101},
  YEAR = {2022},
  NUMBER = {5},
  PAGES = {1774--1796},
  MRNUMBER = {4410836},
  DOI = {10.1080/00036811.2021.2008372},
  URL = {https://doi.org/10.1080/00036811.2021.2008372}
}

@article{MR3504993,
  AUTHOR = {Boffi, Daniele and Botti, Michele and Di Pietro, Daniele A.},
  TITLE = {A nonconforming high-order method for the {B}iot problem on general meshes},
  JOURNAL = {SIAM J. Sci. Comput.},
  VOLUME = {38},
  YEAR = {2016},
  NUMBER = {3},
  PAGES = {A1508--A1537},
  MRNUMBER = {3504993},
  DOI = {10.1137/15M1025505},
  URL = {https://doi.org/10.1137/15M1025505}
}

@article{MR4659441,
  AUTHOR = {Cesmelioglu, Aycil and Lee, Jeonghun J. and Rhebergen, Sander},
  TITLE = {Analysis of an embedded-hybridizable discontinuous {G}alerkin method for {B}iot's consolidation model},
  JOURNAL = {J. Sci. Comput.},
  VOLUME = {97},
  YEAR = {2023},
  NUMBER = {3},
  PAGES = {Paper No. 60, 26},
  MRNUMBER = {4659441},
  DOI = {10.1007/s10915-023-02373-5},
  URL = {https://doi.org/10.1007/s10915-023-02373-5}
}

@article{MR3047799,
  AUTHOR = {Chen, Yumei and Luo, Yan and Feng, Minfu},
  TITLE = {Analysis of a discontinuous {G}alerkin method for the {B}iot's consolidation problem},
  JOURNAL = {Appl. Math. Comput.},
  VOLUME = {219},
  YEAR = {2013},
  NUMBER = {17},
  PAGES = {9043--9056},
  MRNUMBER = {3047799},
  DOI = {10.1016/j.amc.2013.03.104},
  URL = {https://doi.org/10.1016/j.amc.2013.03.104}
}

@article{MR3907413,
  AUTHOR = {Fu, Guosheng},
  TITLE = {A high-order {HDG} method for the {B}iot's consolidation model},
  JOURNAL = {Comput. Math. Appl.},
  VOLUME = {77},
  YEAR = {2019},
  NUMBER = {1},
  PAGES = {237--252},
  MRNUMBER = {3907413},
  DOI = {10.1016/j.camwa.2018.09.029},
  URL = {https://doi.org/10.1016/j.camwa.2018.09.029}
}

@article{MR2122182,
  AUTHOR = {Hinze, Michael},
  TITLE = {A variational discretization concept in control constrained optimization: the linear-quadratic case},
  JOURNAL = {Comput. Optim. Appl.},
  VOLUME = {30},
  YEAR = {2005},
  NUMBER = {1},
  PAGES = {45--61},
  MRNUMBER = {2122182},
  DOI = {10.1007/s10589-005-4559-5},
  URL = {https://doi.org/10.1007/s10589-005-4559-5}
}

@article{MR4405491,
  AUTHOR = {Khan, Arbaz and Zanotti, Pietro},
  TITLE = {A nonsymmetric approach and a quasi-optimal and robust discretization for the {B}iot's model},
  JOURNAL = {Math. Comp.},
  VOLUME = {91},
  YEAR = {2022},
  NUMBER = {335},
  PAGES = {1143--1170},
  MRNUMBER = {4405491},
  DOI = {10.1090/mcom/3699},
  URL = {https://doi.org/10.1090/mcom/3699}
}

@article{Kim1999549,
  AUTHOR = {Kim, Jun-Mo and Parizek, Richard R.},
  TITLE = {Three-dimensional finite element modelling for consolidation due to groundwater withdrawal in a desaturating anisotropic aquifer system},
  JOURNAL = {Int. J. Numer. Anal. Methods Geomech.},
  VOLUME = {23},
  YEAR = {1999},
  NUMBER = {6},
  PAGES = {549--571},
  MRNUMBER = {1696016},
  DOI = {10.1002/(SICI)1096-9853(199905)23:6<549::AID-NAG983>3.0.CO;2-Y},
  URL = {https://doi.org/10.1002/(SICI)1096-9853(199905)23:6<549::AID-NAG983>3.0.CO;2-Y}
}

@article{MR2177147,
  AUTHOR = {Korsawe, Johannes and Starke, Gerhard},
  TITLE = {A least-squares mixed finite element method for {B}iot's consolidation problem in porous media},
  JOURNAL = {SIAM J. Numer. Anal.},
  VOLUME = {43},
  YEAR = {2005},
  NUMBER = {1},
  PAGES = {318--339},
  MRNUMBER = {2177147},
  DOI = {10.1137/S0036142903432929},
  URL = {https://doi.org/10.1137/S0036142903432929}
}

@article{MR3803860,
  AUTHOR = {Lee, Jeonghun J.},
  TITLE = {Robust three-field finite element methods for {B}iot's consolidation model in poroelasticity},
  JOURNAL = {BIT},
  VOLUME = {58},
  YEAR = {2018},
  NUMBER = {2},
  PAGES = {347--372},
  MRNUMBER = {3803860},
  DOI = {10.1007/s10543-017-0688-3},
  URL = {https://doi.org/10.1007/s10543-017-0688-3}
}

@article{MR4636155,
  AUTHOR = {Liang, Hao and Rui, Hongxing},
  TITLE = {The nonconforming locking-free virtual element method for the {B}iot's consolidation model in poroelasticity},
  JOURNAL = {Comput. Math. Appl.},
  VOLUME = {148},
  YEAR = {2023},
  PAGES = {269--281},
  MRNUMBER = {4636155},
  DOI = {10.1016/j.camwa.2023.08.012},
  URL = {https://doi.org/10.1016/j.camwa.2023.08.012}
}

@incollection{malandrino2019poroelasticity,
  AUTHOR = {Malandrino, Andrea and Moeendarbary, Emad},
  TITLE = {Poroelasticity of living tissues},
  BOOKTITLE = {Encyclopedia of Biomedical Engineering},
  PUBLISHER = {Elsevier},
  YEAR = {2019},
  PAGES = {238--245}
}

@article{mccormack2020modeling,
author = {McCormack, Kimberly and Hesse, Marc A. and Dixon, Timothy and Malservisi, Rocco},
title = {Modeling the Contribution of Poroelastic Deformation to Postseismic Geodetic Signals},
journal = {Geophysical Research Letters},
volume = {47},
number = {8},
pages = {e2020GL086945},
doi = {https://doi.org/10.1029/2020GL086945},
url = {https://agupubs.onlinelibrary.wiley.com/doi/abs/10.1029/2020GL086945},
eprint = {https://agupubs.onlinelibrary.wiley.com/doi/pdf/10.1029/2020GL086945},
note = {e2020GL086945 10.1029/2020GL086945},
year = {2020}
}

@article{phillips2008coupling,
  AUTHOR = {Phillips, Phillip Joseph and Wheeler, Mary F.},
  TITLE = {A coupling of mixed and discontinuous {G}alerkin finite-element methods for poroelasticity},
  JOURNAL = {Comput. Geosci.},
  VOLUME = {12},
  YEAR = {2008},
  NUMBER = {4},
  PAGES = {417--435},
  MRNUMBER = {2454687},
  DOI = {10.1007/s10596-008-9082-1},
  URL = {https://doi.org/10.1007/s10596-008-9082-1}
}

@article{MR3606362,
  AUTHOR = {Rivi{\`e}re, B{\'e}atrice and Tan, Jun and Thompson, Travis},
  TITLE = {Error analysis of primal discontinuous {G}alerkin methods for a mixed formulation of the {B}iot equations},
  JOURNAL = {Comput. Math. Appl.},
  VOLUME = {73},
  YEAR = {2017},
  NUMBER = {4},
  PAGES = {666--683},
  MRNUMBER = {3606362},
  DOI = {10.1016/j.camwa.2016.12.030},
  URL = {https://doi.org/10.1016/j.camwa.2016.12.030}
}

@article{Swan200325,
  AUTHOR = {Swan, Colby C. and Lakes, R. S. and Brand, R. A. and Stewart, K. J.},
  TITLE = {Micromechanically based poroelastic modeling of fluid flow in haversian bone},
  JOURNAL = {J. Biomech. Eng.},
  VOLUME = {125},
  YEAR = {2003},
  NUMBER = {1},
  PAGES = {25--37}
}

@article{MR4221326,
  AUTHOR = {Tang, Xialan and Liu, Zhibin and Zhang, Baiju and Feng, Minfu},
  TITLE = {On the locking-free three-field virtual element methods for {B}iot's consolidation model in poroelasticity},
  JOURNAL = {ESAIM Math. Model. Numer. Anal.},
  VOLUME = {55},
  YEAR = {2021},
  PAGES = {S909--S939},
  MRNUMBER = {4221326},
  DOI = {10.1051/m2an/2020064},
  URL = {https://doi.org/10.1051/m2an/2020064}
}

@article{WANGEN2016486,
  AUTHOR = {Wangen, Magnus and Gasda, Sarah and Bj{\o}rnar{\aa}, Tore},
  TITLE = {Geomechanical consequences of large-scale fluid storage in the utsira formation in the north sea},
  JOURNAL = {Energy Procedia},
  VOLUME = {97},
  YEAR = {2016},
  PAGES = {486--493}
}

@article{MR2273503,
  AUTHOR = {Weinstein, Tessa and Bennethum, Lynn S.},
  TITLE = {On the derivation of the transport equation for swelling porous materials with finite deformation},
  JOURNAL = {Int. J. Eng. Sci.},
  VOLUME = {44},
  YEAR = {2006},
  NUMBER = {18-19},
  PAGES = {1408--1422},
  MRNUMBER = {2273503},
  DOI = {10.1016/j.ijengsci.2006.08.001},
  URL = {https://doi.org/10.1016/j.ijengsci.2006.08.001}
}

@article{MR2644299,
  AUTHOR = {Zhou, Zhaojie and Yan, Ningning},
  TITLE = {The local discontinuous {G}alerkin method for optimal control problem governed by convection diffusion equations},
  JOURNAL = {Int. J. Numer. Anal. Model.},
  VOLUME = {7},
  YEAR = {2010},
  NUMBER = {4},
  PAGES = {681--699},
  MRNUMBER = {2644299}
}

@article {ZhangRui2022,
    AUTHOR = {Zhang, Jing and Rui, Hongxing},
     TITLE = {Galerkin method for the fully coupled quasi-static
              thermo-poroelastic problem},
   JOURNAL = {Comput. Math. Appl.},
  FJOURNAL = {Computers \& Mathematics with Applications. An International
              Journal},
    VOLUME = {118},
      YEAR = {2022},
     PAGES = {95--109},
      ISSN = {0898-1221,1873-7668},
   MRCLASS = {65N30 (74F10 80A19)},
  MRNUMBER = {4432106},
MRREVIEWER = {Hao\ Dong},
       DOI = {10.1016/j.camwa.2022.04.019},
       URL = {https://doi.org/10.1016/j.camwa.2022.04.019},
}

@article{YiLee2024,
  AUTHOR = {Yi, Son-Young and Lee, Sanghyun},
  TITLE = {Physics-preserving enriched {G}alerkin method for a fully-coupled thermo-poroelasticity model},
  JOURNAL = {Numer. Math.},
  VOLUME = {156},
  YEAR = {2024},
  NUMBER = {3},
  PAGES = {949--978},
  MRNUMBER = {4836466},
  DOI = {10.1007/s00211-024-01406-x},
  URL = {https://doi.org/10.1007/s00211-024-01406-x}
}

@article {AntoniettiBonettiBotti2023,
    AUTHOR = {Antonietti, Paola F. and Bonetti, Stefano and Botti, Michele},
     TITLE = {Discontinuous {G}alerkin approximation of the fully coupled
              thermo-poroelastic problem},
   JOURNAL = {SIAM J. Sci. Comput.},
  FJOURNAL = {SIAM Journal on Scientific Computing},
    VOLUME = {45},
      YEAR = {2023},
    NUMBER = {2},
     PAGES = {A621--A645},
      ISSN = {1064-8275,1095-7197},
   MRCLASS = {65M60 (65M12 74F05 76S05)},
  MRNUMBER = {4579739},
       DOI = {10.1137/22M1498747},
       URL = {https://doi.org/10.1137/22M1498747},
}

@article {ChenCuiZhou2026,
    AUTHOR = {Chen, Fan and Cui, Ming and Zhou, Chenguang},
     TITLE = {Sequential symmetric interior penalty discontinuous {G}alerkin
              method for fully coupled quasi-static thermo-poroelasticity
              problems},
   JOURNAL = {Comput. Math. Appl.},
  FJOURNAL = {Computers \& Mathematics with Applications. An International
              Journal},
    VOLUME = {218},
      YEAR = {2026},
     PAGES = {67--88},
      ISSN = {0898-1221,1873-7668},
   MRCLASS = {65M15 (65M60 74F05 74F10 76M10)},
  MRNUMBER = {5088347},
       DOI = {10.1016/j.camwa.2026.06.012},
       URL = {https://doi.org/10.1016/j.camwa.2026.06.012},
}

@article{KhanLeeSingh2026,
  AUTHOR = {Khan, Arbaz and Lee, Jeonghun J. and Singh, Harpal},
  TITLE = {Distributed optimal control problems governed by poroelasticity equations},
  JOURNAL = {arXiv preprint arXiv:2605.30839},
  YEAR = {2026},
  EPRINT = {2605.30839},
  ARCHIVEPREFIX = {arXiv},
  PRIMARYCLASS = {math.OC}
}

@article {MR4146798,
    AUTHOR = {Brun, Mats Kirkes\ae ther and Ahmed, Elyes and Berre, Inga and
              Nordbotten, Jan Martin and Radu, Florin Adrian},
     TITLE = {Monolithic and splitting solution schemes for fully coupled
              quasi-static thermo-poroelasticity with nonlinear convective
              transport},
   JOURNAL = {Comput. Math. Appl.},
  FJOURNAL = {Computers \& Mathematics with Applications. An International
              Journal},
    VOLUME = {80},
      YEAR = {2020},
    NUMBER = {8},
     PAGES = {1964--1984},
      ISSN = {0898-1221,1873-7668},
   MRCLASS = {65M60 (65M22 74F10 76S05)},
  MRNUMBER = {4146798},
       DOI = {10.1016/j.camwa.2020.08.022},
       URL = {https://doi.org/10.1016/j.camwa.2020.08.022},
}
\end{document}